\theoremstyle{plain}
\newtheorem{thm}{Theorem}[section]
\newtheorem{lem}[thm]{Lemma}
\newtheorem{prop}[thm]{Proposition}
\theoremstyle{definition}
\newtheorem{rem}[thm]{Remark}
\numberwithin{equation}{section}
\newcommand{\ds}{\displaystyle}
\newcommand{\N}{\mathbb N} 
\newcommand{\R}{\mathbb R} 
\newcommand{\Sn}{\mathbb S} 
\newcommand{\Ms}{{\mathbb M}^{n{\times}n}_{\rm sym}}
\newcommand{\Msd}{{\mathbb M}^{n{\times}n}_D}
\newcommand{\Mst}{{\mathbb M}^{3{\times}3}_{\rm sym}}
\newcommand{\E}{{\mathcal E}}
\newcommand{\wto}{\rightharpoonup}
\newcommand{\e}{\varepsilon}
\newcommand{\LL}{{\mathcal L}}
\newcommand{\HH}{{\mathcal H}}
\newcommand{\M}{{\mathcal M}}
\newcommand{\C}{{\mathcal C}}
\DeclareMathOperator{\tr}{tr}
\DeclareMathOperator{\Id}{Id}
\DeclareMathOperator{\dive}{div}
\DeclareMathOperator{\cof}{cof}
\let\O=\Omega
\newcommand{\restrict}{\begin{picture}(10,8)\put(2,0){\line(0,1){7}}\put(1.8,0){\line(1,0){7}}\end{picture}}
\begin{document}
 
\title[Concentration versus oscillation effects in brittle damage]{Concentration versus oscillation effects in\\brittle damage}

\author[J.-F. Babadjian]{Jean-Fran\c cois Babadjian}
\address[J.-F. Babadjian]{Laboratoire de Math\'ematiques d'Orsay, Univ. Paris-Sud, CNRS, Universit\'e Paris-Saclay, 91405 Orsay, France.}
\email{jean-francois.babadjian@math.u-psud.fr}

\author[F. Iurlano]{Flaviana Iurlano}
\address[F. Iurlano]{Sorbonne Universit\'e, CNRS, Universit\'e de Paris, Laboratoire Jacques-Louis Lions, F-75005 Paris, France}
\email{iurlano@ljll.math.upmc.fr}

\author[F. Rindler]{Filip Rindler}
\address[F. Rindler]{Mathematics Institute, University of Warwick, Coventry CV4 7AL, UK, and The Alan Turing Institute, British Library, 96 Euston Road, London NW1 2DB, UK}
\email{F.Rindler@warwick.ac.uk}

\date{\today}
\subjclass[2010]{}

\keywords{}

\begin{abstract}
This work is concerned with an asymptotic analysis, in the sense of $\Gamma$-convergence, of a sequence of variational models of brittle damage in the context of linearized elasticity. The study is performed as the damaged zone concentrates into a set of zero volume and, at the same time and to the same order $\varepsilon$, the stiffness of the damaged material becomes small. Three main features make the analysis highly nontrivial: at $\varepsilon$ fixed, minimizing sequences of each brittle damage model oscillate and develop microstructures; as $\varepsilon\to 0$, concentration and saturation of damage are favoured; and the competition of these phenomena translates into a degeneration of the growth of the elastic energy, which passes from being quadratic (at $\varepsilon$ fixed) to being of linear-growth type (in the limit). Consequently, homogenization effects interact with singularity formation in a nontrivial way, which requires new methods of analysis. In particular, the interaction of homogenization with singularity formation in the framework of linearized elasticity appears to not have been considered in the literature so far. We explicitly identify the $\Gamma$-limit in two and three dimensions for isotropic Hooke tensors. The expression of the limit effective energy turns out to be of Hencky plasticity type. We further consider the regime where the divergence remains square-integrable in the limit, which leads to a Tresca-type model.

\medskip

\noindent\textsc{Keywords} Brittle damage, variational model, asymptotic analysis, Hencky plasticity
\end{abstract}

\maketitle

\tableofcontents

\newpage

\section{Introduction}

In the theory of brittle damage (see, e.g.,~\cite{FM1}) in the so-called ``brutal'' regime, a linearly elastic material can exist in one of two states: a damaged state, for which the energy is described via a symmetric fourth-order ``weak'' elasticity (Hooke) tensor $\mathbf{A}_w$; or an undamaged state with a ``strong'' elasticity tensor $\mathbf{A}_s$, with $\mathbf{A}_w \leq \mathbf{A}_s$. Damage is a typical inelastic phenomenon described by means of an internal variable, which here is given as the characteristic function of the damaged region. The dissipational energy is taken as proportional to the damaged volume. If $\O \subset \R^n$ stands for the volume occupied by the body at rest, $u:\O \to \R^n$ ($n = 2$ or $n = 3$) is the displacement and $\chi:\O \to \{0,1\}$ is the characteristic function of the damaged region, then the total energy is given as
\[
(u,\chi) \mapsto E(u,\chi):= \frac12 \int_\O \bigl[\chi \mathbf{A}_w + (1-\chi)\mathbf{A}_s \bigr] e(u):e(u)\, dx + \kappa \int_\O \chi\, dx,
\]
where $\kappa > 0$ is the material toughness, i.e., the local cost of damaging a healthy part of the medium, and $e(u):=\frac12 (\nabla u + \nabla u^T)$ is the linearized strain. This type of energy functional is also encountered in the theory of shape optimization, where one aims to find an optimal shape (here $D:=\{\chi=1\}$) minimizing a cost functional (here the elastic energy) under a volume constraint. In this framework, the toughness $\kappa$ can be thought of as a Lagrange multiplier associated to this equality constraint.

Assuming standard symmetry and ellipticity conditions on the elasticity tensors $\mathbf{A}_w$ and $\mathbf{A}_s$, the above energy $E$ is well-defined for displacements $u \in H^1(\O;\R^n)$. It is well known that the problem of minimizing $E$ (adding suitable forces and/or boundary conditions) is ill-posed, in the sense that minimizing sequences tend to highly oscillate and develop microstructure (see, e.g.,~\cite{FM1,KS}). A relaxation phenomenon occurs, leading to a homogenized problem where brittle damage is replaced by progressive damage. In this new formulation, damage is described by means of a volume fraction $\theta \in L^\infty(\O;[0,1])$ and the homogenized stiffness of a composite material is obtained through fine mixtures between the damaged part with volume fraction $\theta$ and the undamaged part with volume fraction $1-\theta$. Much work has been devoted to the study of this relaxed problem in homogenization theory, for example to the identification of all attainable composite materials (the so-called $G$-closure set), or to bounds on the effective coefficients (the Hashin-Shtrikman bounds). We refer to~\cite{MT,T,FraMur,AK1,AK2,KS} and to the monograph~\cite{A} as well as the references therein for more details. 

Minimizing $E$ first with respect to $\chi$, the relaxation problem described above can be rephrased as the identification of the lower semicontinuous envelope of the functional
$$u \in H^1(\O;\R^n) \mapsto \int_\O W (e(u))\, dx,$$
where 
$$W(\xi):=\min \left\{ \frac12 \mathbf{A}_s\xi:\xi ,\frac{1}{2}\mathbf{A}_w\xi:\xi +\kappa \right\}.$$
Notice in particular that $W$ fails to be (quasi-)convex. Standard relaxation results show that the lower semicontinuous envelope is given by
$$u \mapsto \int_\O SQW (e(u))\, dx,$$
where $SQW$ is the symmetric quasiconvex envelope of $W$. An explicit expression for $SQW$ is in general unknown, although several results have been obtained, see, for instance,~\cite{AF,AL}.

In the present work, we are interested in the limit passage to a total damage model, i.e., when the elasticity coefficients $\mathbf{A}_w$ of the weak material tend to zero, and at the same time the volume of the damaged region vanishes. More precisely, we introduce a small parameter $\e > 0$ and consider the rescaled energy functional
\[
  E_\e(u,\chi):=\frac12 \int_\O \bigl[ \eta_\e \chi \mathbf{A}_w + (1-\chi)\mathbf{A}_s \bigr] e(u):e(u)\, dx + \frac{\kappa}{\e} \int_\O \chi\, dx,
\]
where $\eta_\e \to 0$ as $\e \to 0$ is a rescaling factor. We then ask about the limit behavior of $E_\e$ as $\e \to 0$. Note that now there is a trade-off between the cost of the damage $\kappa/\e$ and the resulting weakening of the stiffness tensor $\eta_\e \mathbf{A}_w$ in the damaged region. 

One motivation of this analysis goes back to the numerical investigations performed in~\cite{AJVG} in a discrete framework. There, forcing the elastic properties to become weaker and weaker on sets of arbitrarily small measure leads to the appearance of concentrations. A first aim of this paper is to make rigorous such observations and to precisely describe the limit model obtained through an asymptotic analysis.

From a mathematical point of view, we will carry out our analysis by computing the $\Gamma$-limit of $E_\e$ as $\e \to 0$ for the three possible regimes of $\eta_\e \ll \e$, $\eta_\e \sim \e$ and $\eta_\e \gg \e$. It turns out that the most relevant regime is $\eta_\e \sim \e$. Indeed, on the one hand, if $\eta_\e \ll \e$, the elastic energy associated with the damaged material is so negligible that we obtain a trivial $\Gamma$-limit (see Theorem~\ref{thm:alpha=0}); we here do not address the question whether a suitable rescaling of the energy gives rise to a non-vanishing limit. Indeed, according to the proof of Theorem~\ref{thm:alpha=0}, it turns out that the energy scales like $\sqrt{\eta_\e/\e}$ so that we expect the right energy rescaling to be $\sqrt{\e/\eta_\e} E_\e$. On the other hand, if $\eta_\e \gg \e$, the damaged set is so small that the limit model turns out to be of pure elasticity type with elasticity tensor $\mathbf{A}_s$ (see Theorem~\ref{thm:alpha=infty}). 

The case $\eta_\e \sim \e$ poses a number of mathematical challenges. First, as $\e \to 0$, it is not hard to see that, if $u_\e$ denotes an almost-infimum point of $E_\e$, the only uniform bound that can be obtained is on the $L^1$-norm of the elastic strains $(e(u_\e))_{\e>0}$  (see Lemma~\ref{lem:comp1}). This shows that $e(u_\e)$ may concentrate into a singular measure in the limit, which describes ``condensated'' defects inside the medium. The domain of the displacements in the $\Gamma$-limit is thus given by $BD(\O)$, the space of vector fields of bounded deformation (see the next section for a precise definition). Second, to compute the $\Gamma$-limit of $E_\e$, we need to take into account that homogenization effects  will interact with the formation of concentrations in a nontrivial way. We are not aware of any previous works considering the above framework. We remark that the quadratic-to-linear behavior arising from energetic competition is typical of works in the gradient theory of phase transition~\cite{FT,BF}, where, however, the full gradient is considered in place of the symmetric gradient; a quadratic-to-linear-type behavior in the context of linearized elasticity is obtained in~\cite{BDV1,BDV}, but there the relaxation concerns a functional defined on functions that are smooth outside the free-discontinuity set; finally, explicit identifications of the $\Gamma$-limit in linearized elasticity are available for quadratic-to-quadratic convergences~\cite{FI,CCF,C,CC}.
To conclude this bibliographic overview, let us mention an interesting connection with the optimal design problems studied in~\cite{O1} and, more recently, in~\cite{O2}. They appear as a dual version of our problem, being formulated in terms of the stress $\sigma:=\mathbf{A}e(u)$ instead of the strain $e(u)$. From a technical point of view, the main difference with our work is that only one phase (the weak phase) is considered there. This permits to prove the $\Gamma$-liminf inequality in a more abstract way through a careful change of the boundary datum.

The identification of the $\Gamma$-limit is highly nontrivial because of the inherent nonconvexity of the problem. Assuming for simplicity that $\eta_\e = \e$, the problem of finding the $\Gamma$-limit of $E_\e$ turns out to be equivalent to finding the $\Gamma$-limit of the family of functionals 
$$u \in H^1(\O;\R^n) \mapsto \int_\O W_\e(e(u))\, dx,$$
where
\[
W_\e(\xi):=\min \left\{ \frac12 \mathbf{A}_s\xi:\xi ,\frac{\e}{2}\mathbf{A}_w\xi:\xi +\frac{\kappa}{\e} \right\},
\]
or still the $\Gamma$-limit of their relaxations, given by
$$u \mapsto \int_\O SQW_\e(e(u))\, dx,$$
where $SQW_\e$ is the symmetric quasiconvex envelope of $W_\e$. We next specialize to isotropic Hooke tensors $\mathbf{A}_w$ and $\mathbf{A}_s$, that is,
\begin{align*}
\mathbf{A}_w \xi & := \lambda_w (\tr\xi)\, \Id+ 2\mu_w \xi,\\
\mathbf{A}_s \xi & := \lambda_s (\tr\xi)\, \Id+ 2\mu_s \xi,
\end{align*}
where $\lambda_i>0$ and $\mu_i>0$ are the Lam\'e coefficients.
In this case, although the explicit expression of $SQW_\e$ is not known (see~\cite{AL}), it is possible to compute explicitly its pointwise limit $\overline W$, which rests on an interesting $\Gamma$-convergence argument for the Hashin-Shtrikman bound (see Proposition~\ref{prop:Walpha}). More precisely, the pointwise limit $\overline W$ is given as an infimal convolution
$$
\overline W(\xi) := (f \, \Box \, \sqrt{2\kappa h})(\xi) :=\inf_{\xi' \in \Ms} \bigl\{ f(\xi-\xi')+\sqrt{2\kappa h(\xi')} \bigr\},  \qquad \xi\in\Ms,
$$
where 
$$f(\xi) := \frac12 \mathbf{A}_s\xi:\xi \quad\text{and}\quad h(\xi) :=\mu_w \left(\sum_{i=1}^n|\xi_i| \right)^2+(\lambda_w+\mu_w) \left(\sum_{i=1}^n \xi_i \right)^2,$$
with the $\xi_i$'s denoting the eigenvalues of $\xi$.

Our main result (see Theorem~\ref{thm:alpha=1}) is then that the functionals $E_\e$ $\Gamma$-converge as $\e\to 0$ to the functional
\[
\label{energy_intro}
u \in BD(\O) \mapsto \int_\O \overline W(e(u))\, dx + \int_\O \overline W^\infty \left( \frac{dE^s u}{d|E^s u|} \right)\, d|E^s u|,
\]
where $\overline W^\infty$ is the recession function of $\overline W$ and the linearized strain measure $Eu$ is decomposed (in the Lebesgue--Radon--Nikod\'ym sense) as $Eu = e(u) \mathcal{L}^n + E^s u$. The function $\overline{W}$ turns out to be quadratic close to the origin and to grow linearly at infinity, with a slope given by the recession function $\overline W^\infty=\sqrt{2\kappa h}$. Remarkably, and perhaps surprisingly, this is a typical energy density encountered in perfect plasticity (actually, Hencky plasticity, since we are dealing with static models). So, our results show how a brittle damage model may lead to a plasticity model in a singular limit (see also~\cite{I,DMI} for gradient damage models).

This result entails that for the bulk part we have a response that is (optimally) homogenized between the undamaged and the damaged parts, while for the singular part (which may contain jumps and fractals) we only see a dependence on the damaged Hooke tensor $\mathbf{A}_w$. Since for $\xi \in \Ms$ the expression $\sqrt{2\kappa h(\xi)}$ describes the energy cost (density) of optimally damaging the linear map $x \mapsto \xi x$, the above expression for the $\Gamma$-limit can be interpreted as follows: in the bulk part, the material may oscillate finely between damaged and undamaged areas, giving, by definition of the infimal convolution, a decomposition of the homogenized bulk energy of the form
$$\overline W(\xi)=\frac12 \mathbf{A}_se:e +\sqrt{2\kappa  h(p)},$$
where the linearized strain is additively split as $\xi=e+p$ with $e$ an elastic strain and $p$ a plastic (permanent) strain.

For the proof of the theorem, one first observes that the effective integrand $\overline W$ is a natural candidate for the bulk energy density of the $\Gamma$-limit and the energy functional associated to it easily provides an upper bound for $E_\e$. We stress that it is not straightforward to obtain the $\Gamma$-limsup inequality through a direct construction of a recovery sequence. Explicit constructions can, however, be exhibited if the displacement is linear $u(x)=\xi x$ and the matrix $\xi$ is diagonal, and improved if $\xi$ is rank-one symmetric (see Section~\ref{Hencky}).

The problem of establishing the lower bound is much more delicate. The crucial question  is to understand the interplay between the shape of $SQW_\e$ and a sequence of symmetric gradients. These questions are in general highly nontrivial and not much is known. The only results about concentrations in sequences in $BD(\O)$ seem to be~\cite{DPR,DPR2}; also see the recent survey~\cite{DPR3}. The main difficulty is related to the fact that there is a loss in the growth of the elastic energy passing to the limit as $\e \to 0$, which prevents one to easily control the contribution of the energy for large strains. In addition, contrary to~\cite{O2,O1}, standard cut-off techniques, which replace the boundary value of a minimizing sequence by that of the target, do not apply since minimizing sequences only converge in the weak* sense in $BD$ (thus strongly in $L^p$ for any $p<\frac{n}{n-1} \leq 2$ by compact embedding), while the energy has quadratic growth for fixed $\e$. 

The classical argument to get a lower bound is to apply Young's inequality inside the damaged region. This allows us to bound from below the energy associated to arbitrary sequences $(\chi_\e)_{\e>0}$ and $(u_\e)_{\e>0}$ by
\begin{equation}\label{1617}
\int_\O (1-\chi_\e) \frac12 \mathbf{A}_s e(u_\e):e(u_\e) + \chi_\e \sqrt{2\kappa \mathbf{A}_w e(u_\e):e(u_\e)}\, dx.
\end{equation}
One observes
$$\sqrt{2\kappa \mathbf{A}_w \xi:\xi}\leq\sqrt{2\kappa h(\xi)}$$ and that equality holds only on rank-one symmetric matrices $a\odot b$ (see Proposition~\ref{prop:W_alpha}). Hence, this lower bound would coincide with the previous upper bound if $e(u_\e)(x)$ was rank-one symmetric for almost every $x\in\{\chi_\e=1\}$, which, however, is obviously false. 

Analyzing for simplicity the two-dimensional case, one observes that, when $e(u_\e)$ is not rank-one symmetric, the gap originating from replacing $\mathbf{A}_we(u_\e):e(u_\e)$ by $h(e(u_\e))$ in \eqref{1617} is controlled by the quantity $(\det(e(u_\e)))^+$. Now, heuristically, since $|e(u_\e)\chi_\e|\sim 1/\e$, one imagines that the subset, say $Z_\e$, where $u_\e$ has slope $1/\e$ along two different directions (in the sense that $e(u_\e)$ fails to be rank-one symmetric and has both eigenvalues of order $1/\e$) has measure of order strictly smaller than $\e$. If one would be able to formalize this idea, the two bounds obtained from below and from above would match. This intuition is supported by the fact that $e(u_\e)$ on $Z_\e$ is away from the wave cone associated to the differential operator ${\rm curl}\, {\rm curl}$, so that by~\cite{DPR} it is reasonable to expect some elliptic regularity properties for $u_\e$ in $Z_\e$ and therefore a good size estimate for $Z_\e$. However, the formalization of this ``compensated compactness'' strategy is at present unclear and we here must follow a different argument (which can, in fact, itself also be seen as a ``compensated compactness'' approach).

The key observation enabling our proof is that $\sqrt\e u_\e\rightharpoonup 0$ weakly in $H^1(\O;\R^n)$ and therefore in dimension $n=2$ one has ${\e\rm det}(\nabla u_\e)\rightharpoonup 0$ weakly* in the sense of measures. Fine computations are needed to adapt this observation to the symmetric gradient, then to its positive part, and, finally, to generalize the argument to three dimensions, where the condition ${\e\rm det}(\nabla u_\e)\rightharpoonup 0$ has to be replaced by ${\e\rm cof}(\nabla u_\e)\rightharpoonup 0$ with $\cof(\xi)$ the cofactor matrix associated to $\xi$.

In the same spirit as the model described above, we also study the asymptotic behavior of a similar family of functionals, where now the divergence term of the weak material does not degenerate to zero. More precisely, we consider a weak material with an elasticity tensor $\mathbf{A}_w^\e$ of the form
$$\mathbf{A}_w^\e \xi:= \lambda_w (\tr\xi)\Id+2\e\mu_w \xi,$$
where $\lambda_w \leq \lambda_s$. For all $(u,\chi) \in H^1(\O;\R^n) \times L^\infty(\O;\{0,1\})$, the associated energy is defined by
$$\widetilde E_\e(u,\chi):= \frac12 \int_\O \bigl[ \chi \mathbf{A}^\e_w + (1-\chi)\mathbf{A}_s \bigr] e(u):e(u)\, dx + \frac{\kappa}{\e} \int_\O \chi\, dx.$$
In this new problem, the divergence of the displacement is not penalized anymore, and the domain of the $\Gamma$-limit is given by those displacements $u \in BD(\O)$ satisfying $\dive u \in L^2(\O)$ (that is, the distributional divergence is absolutely continuous with respect to Lebesgue measure and has a square summable density). In other words, this means that the displacement $u$ lies in the Temam--Strang space $U(\O)$, see, e.g.,~\cite{Temam}. Using the same type of arguments, we show that the $\Gamma$-limit is a quadratic functional of $\dive u$ and a linear-growth functional of the deviatoric part $E_D u$ of the linearized strain measure $Eu$. It is explicitly given by
$$u \mapsto \int_\O\left(\frac{\lambda_s}{2}+\frac{\mu_s}{n}\right)(\dive u)^2 \, dx
 + \int_\O \widetilde W(e_D(u))\, dx + \int_\O  \sqrt{2\kappa \tilde h\left(\frac{dE^s_D u}{d|E^s_D u|} \right)} \,d|E^s_D u|,$$
where the deviatoric bulk energy density is again defined via an infimal convolution, namely as
$$\widetilde W:=\tilde f \, \Box \, \sqrt{2\kappa \tilde h}$$
with
$$\tilde f(\xi):=\mu_s|\xi|^2, \qquad \tilde h(\xi):=\mu_w \left(\sum_{i=1}^n|\xi_i| \right)^2 \quad \text{ for all }\xi \in \Msd$$
and $\xi_1 \leq \cdots \leq \xi_n$ being the ordered eigenvalues of $\xi$. We recover in this way the well-known Tresca model of perfect plasticity since $\sqrt{2\kappa\tilde h}$ is precisely the support function of the Tresca elasticity set $\widetilde K:=\bigl\{\tau \in \Msd : \tau_n-\tau_1 \leq 2 \sqrt{2\kappa \mu_w}\bigr\}$, where again $\tau_1 \leq \cdots \leq \tau_n$ are the ordered eigenvalues of the deviatoric matrix $\tau \in \Msd$.

The analysis carried out in this work is only concerned with the understanding of effective limit energies at the static level. If the body is subjected to time-progressive loads or boundary conditions, it is natural to go further to a time dependent model in the framework of quasistatic evolution under an irreversibility constraint on the damage process (see~\cite{FM1}). However, the understanding of the interplay between relaxation and irreversibility is usually a delicate issue, see e.g.~\cite{FG} for an energy-based model and~\cite{GL} for a threshold-based model. At present, it is unknown how irreversibility for fixed $\e>0$ is translated into the limit evolution model, if both approaches give rise to the same limit model of perfect plasticity as $\e \to 0$, and if irreversibility can change the limit model with respect to the static problem. For a passage
to the limit in a formally similar problem including an irreversibility
condition, see the forthcoming paper~\cite{BCI}

\medskip

This paper is organized as follows. In Section 2, we introduce general notation and define precisely the problem under investigation. In Section 3, we analyze the main regime $\eta_\e \sim \e$, leading to a Hencky-type model. Sections 4 and 5 are devoted to investigating the trivial regime $\eta_\e \ll \e$ and the elastic regime $\eta_\e\ll \e$. Finally, in Section 6, we carry out the analysis of the modified problem leading to a Tresca-type model.

\section{Notation and preliminaries}

\subsection{Notation}

The Lebesgue measure in $\R^n$ is denoted by $\mathcal L^n$ and $\HH^k$ stands for the $k$-dimensional Hausdorff (outer) measure. If $a$ and $b \in \R^n$, we write $a \cdot b:=\sum_{i=1}^n a_i b_i$ for the Euclidean scalar product, and we denote the corresponding norm by $|a|:=\sqrt{a \cdot a}$. 

\medskip

{\it Matrices.} The space of symmetric $n \times n$ matrices is denoted by $\Ms$. It is endowed with the Frobenius scalar product $\xi:\eta:=\tr(\xi \eta)$ and with the corresponding Frobenius norm $|\xi|:=\sqrt{\xi:\xi}$. We also denote by $\Msd$ the set of all symmetric deviatoric matrices, i.e.\ all $\xi \in \Ms$ such that $\tr\xi=0$. Any matrix $\xi \in \Ms$ can be uniquely decomposed as $\xi=\xi_D+\frac{\tr\xi}{n} \Id$, where $\xi_D:=\xi-\frac{\tr\xi}{n} \Id \in \Msd$ is the deviatoric part of $\xi$, and $\frac{\tr\xi}{n} \Id$ is the hydrostatic part of $\xi$. Finally, given $\xi \in \Ms$, we denote by $\det \xi$ its determinant and by $\cof \xi \in \Ms$ its cofactor matrix.
For any $a$, $b \in \R^n$, we define the tensor product $a \otimes b:=ab^T$ and the symmetric tensor product $a \odot b:=(a\otimes b+b\otimes a)/2$.

We recall two lemmas from linear algebra:

\begin{lem}\label{lem:aodotb}
Let $a$ and $b \in \R^n$. Then, the matrix $a \odot b$ has at most rank $2$, and in this case the nonzero eigenvalues have opposite signs. Conversely, if $\xi \in \Ms$ has rank two and the two nonzero eigenvalues have opposite signs, then there are $a,b \in \R^n$ such that $\xi = a \odot b$.
\end{lem}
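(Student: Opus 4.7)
The plan is to prove both implications by a direct computation in a well-chosen orthonormal basis.

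For the direct implication, I observe first that every element in the image of $a \odot b$ is a linear combination of $a$ and $b$, so the rank is at most $2$. Assuming it equals $2$, the vectors $a$ and $b$ are linearly independent, and I would pick an orthonormal basis $\{e_1, e_2\}$ of $V := \mathrm{span}\{a,b\}$ with $e_1 := a/|a|$, writing $b = \alpha e_1 + \beta e_2$ with $\beta \neq 0$. Since $V$ is invariant under $a \odot b$, the two nonzero eigenvalues are those of the restriction to $V$, whose matrix in this basis is
$$
\begin{pmatrix} |a|\alpha & |a|\beta/2 \\ |a|\beta/2 & 0 \end{pmatrix}.
$$
Its determinant equals $-|a|^2\beta^2/4 < 0$, and as this is the product of the two nonzero eigenvalues, they must have opposite signs.

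For the converse, assume $\xi \in \Ms$ has rank $2$ with nonzero eigenvalues $\lambda_1 < 0 < \lambda_2$ associated to orthonormal eigenvectors $v_1, v_2 \in \R^n$. The natural ansatz is
$$
a := \sqrt{-\lambda_1}\, v_1 + \sqrt{\lambda_2}\, v_2, \qquad b := -\sqrt{-\lambda_1}\, v_1 + \sqrt{\lambda_2}\, v_2,
$$
whose well-posedness relies precisely on the opposite sign hypothesis for $\lambda_1, \lambda_2$. Extending $(v_1,v_2)$ to an orthonormal basis of $\R^n$ and expanding $a \odot b = (ab^T + ba^T)/2$ in this basis, the cross terms $\pm\sqrt{-\lambda_1 \lambda_2}$ cancel by the sign flip in the $v_1$-component, and one obtains the diagonal matrix with entries $(\lambda_1, \lambda_2, 0, \ldots, 0)$, which is exactly $\xi$.

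I do not anticipate a substantive obstacle. The only point requiring a moment of thought is the sign choice in the converse construction, which is forced by the requirement that the off-diagonal entries in the $(v_1, v_2)$ block vanish; this is exactly where the opposite sign hypothesis enters in an essential way.
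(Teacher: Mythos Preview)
Your argument is correct in both directions: the determinant computation in the $(e_1,e_2)$ block shows the two nonzero eigenvalues have opposite signs, and the explicit ansatz for the converse works exactly as you describe. The paper does not give its own proof of this lemma but simply cites \cite[Lemma~2.2]{DPR3}, so your self-contained argument in fact supplies more than the paper does.
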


\begin{lem}\label{lem:polyconvex}
For all $\xi \in \mathbb M^{3 \times 3}_{\rm sym}$, the matrix $\cof(\xi)$ is diagonalizable in the same orthonormal basis as $\xi$. In addition, if $\xi_1$, $\xi_2$, and $\xi_3$ are the eigenvalues of $\xi$, then $\xi_2\xi_3$, $\xi_1 \xi_3$ and $\xi_1\xi_2$ are the eigenvalues of $\cof(\xi)$.
\end{lem}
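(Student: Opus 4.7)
The plan is to reduce the lemma to a direct computation using the spectral theorem and the multiplicativity of the cofactor map. By the spectral theorem, since $\xi\in\mathbb{M}^{3\times 3}_{\rm sym}$, there exists an orthogonal matrix $Q\in O(3)$ such that $\xi = Q D Q^T$, where $D=\mathrm{diag}(\xi_1,\xi_2,\xi_3)$ collects the eigenvalues of $\xi$. The goal is to show that $\cof(\xi) = Q\, \cof(D)\, Q^T$, which exhibits $\cof(\xi)$ as being diagonalized by the same orthonormal eigenbasis as $\xi$ and simultaneously identifies its eigenvalues.

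First, I would record the algebraic identity $\cof(AB)=\cof(A)\,\cof(B)$ for all $A,B\in\mathbb{M}^{3\times 3}$; this is a polynomial identity in the matrix entries (equivalent to the invertible case, where it follows from $\cof(A)=(\det A)(A^{-1})^T$, and then extended by continuity/density to all matrices). Applied to $\xi = Q\cdot D \cdot Q^T$, this gives
\[
\cof(\xi) \;=\; \cof(Q)\,\cof(D)\,\cof(Q^T).
\]

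Next, I would compute the two outer factors. For an orthogonal $Q$, one has $Q^{-1}=Q^T$ and $\det Q=\pm 1$, hence $\cof(Q)=(\det Q)(Q^{-1})^T=(\det Q)\,Q$, and likewise $\cof(Q^T)=(\det Q)\,Q^T$. Substituting back,
\[
\cof(\xi)=(\det Q)^2\,Q\,\cof(D)\,Q^T = Q\,\cof(D)\,Q^T.
\]
A direct calculation from the definition of the cofactor on a $3\times 3$ diagonal matrix yields
\[
\cof(D) \;=\; \mathrm{diag}(\xi_2\xi_3,\,\xi_1\xi_3,\,\xi_1\xi_2).
\]
Combining these two identities proves both assertions of the lemma at once: $\cof(\xi)$ is diagonalized by the orthonormal basis made of the columns of $Q$ (i.e., the same eigenbasis as $\xi$), and its eigenvalues are precisely $\xi_2\xi_3$, $\xi_1\xi_3$, $\xi_1\xi_2$.

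There is essentially no obstacle here; the only minor subtlety is that the multiplicativity of the cofactor and the formula $\cof(A)=(\det A)(A^{-1})^T$ are initially stated for invertible matrices, but both are polynomial identities in the entries and therefore extend to all $\xi\in\mathbb{M}^{3\times 3}_{\rm sym}$. One can alternatively bypass this by first proving the claim for invertible $\xi$ and then passing to the limit $\xi+tI\to\xi$ as $t\to 0$, using the continuity of $\cof$ and of the eigenvalues.
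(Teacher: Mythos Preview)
Your proof is correct and complete. It takes a somewhat different route from the paper's one-line justification, which invokes the fact that commuting symmetric matrices share an orthonormal basis of eigenvectors (the commutation here coming from the adjugate identity $\xi\,\cof(\xi)^T=\det(\xi)\,\Id$, together with $\cof(\xi)^T=\cof(\xi)$ for symmetric $\xi$). Your approach via the multiplicativity of $\cof$ and the spectral decomposition is more explicit: it simultaneously diagonalizes $\cof(\xi)$ and reads off the eigenvalues in one stroke, whereas the commutation argument still needs the diagonal computation $\cof(D)=\mathrm{diag}(\xi_2\xi_3,\xi_1\xi_3,\xi_1\xi_2)$ afterwards to identify the eigenvalues. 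The only extra ingredient you need is the identity $\cof(AB)=\cof(A)\cof(B)$, and you handle its extension from invertible to general matrices correctly (polynomial identity/density). Both arguments are short and standard; yours is arguably the more self-contained of the two.
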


A proof of the first lemma is in~\cite[Lemma~2.2]{DPR3} and the second lemma follows from the fact that commuting symmetric matrices share a basis of eigenvectors.

\medskip

{\it Function spaces.} We use standard notation for Lebesgue spaces, $L^p$, and Sobolev spaces, $W^{k,p}$ or $H^k:=W^{k,2}$. Given an open subset $\O$ of $\R^n$, we denote by $BD(\O)$ the space of functions of bounded deformation, i.e., all vector fields $u \in L^1(\O;\R^n)$ such that the distributional linearized strain $Eu:=(Du+Du^T)/2 \in \mathcal M(\O;\Ms)$, where $\mathcal M(\O;\Ms)$ stands for the space of all $\Ms$-valued Radon measures with finite total variation. 
We can split $Eu$ according to the Lebesgue decomposition as
\[
  Eu = e(u) \mathcal{L}^n \restrict \O + E^s u = e(u) \mathcal{L}^n \restrict \O + \frac{dE^s u}{d|E^s u|} |E^s u|,
\]
where $e(u) \in L^1(\O;\Ms)$ is the Radon--Nikod\'ym derivative of $Eu$ with respect to $\mathcal{L}^n$, and $E^s u$ is the singular part of $Eu$ with respect to $\mathcal{L}^n$. Furthermore, we denote by $\frac{dE^s u}{d|E^s u|}$ the Radon--Nikodym derivative of $E^s u$ by its own total variation measure $|E^s u|$, i.e.\ the polar of $E^s u$. We refer to~\cite{ST1,Suquet,Temam,ACDM,DPR3} for general properties of the space $BD(\O)$. We also define $LD(\O):=\{u \in BD(\O) : \; E^s u=0\}$.

\medskip

{\it Convex analysis.} We recall several definitions and basic facts from convex analysis (we refer to~\cite{ET,R} for proofs). Let $\psi:\Ms \to [0,+\infty]$ be a proper function (i.e.\ not identically $+\infty$). The convex conjugate of $\psi$ is defined as
\begin{equation}\label{convconj}
\psi^*(\tau):=\sup_{\xi \in \Ms} \bigl\{ \tau:\xi - \psi(\xi) \bigr\} \quad \text{ for all }\xi \in \Ms,
\end{equation}
which is a convex and lower semicontinuous function. Repeating the process, we can define the biconjugate function $\psi^{**}:=(\psi^*)^*$, which turns out to be the lower semicontinuous convex hull of $\psi$, i.e., the largest lower semicontinuous and convex function below $\psi$. In particular, if $C \subset \Ms$ is a set, we define the indicator function $I_C$ of $C$ as $I_C:=0$ in $C$ and $+\infty$ otherwise. The convex conjugate $I_C^*$ of $I_C$ is called the support function of $C$.

If $k: \Ms \to [0,+\infty]$ is a positively $1$-homogeneous convex function such that $k(0)=0$, the polar function of $k$ is defined by 
$$k^\circ(\xi):=\sup_{k(\tau) \leq 1} \tau:\xi \quad \text{ for all }\xi \in \Ms.$$

Let $\phi:\Ms \to [0,+\infty)$ be a convex function. Then, the limit
$$\phi^\infty(\xi):=\lim_{t \to +\infty}\frac{\phi(t\xi)}{t}$$
exists for every $\xi \in \Ms$ (in $[0,+\infty]$), and $\phi^\infty$ is called the recession function of $\phi$. It is a convex positively $1$-homogeneous function.

If $\phi_1,\phi_2 : \Ms \to [0,+\infty]$ are proper convex functions, then the infimal convolution of $\phi_1$ and $\phi_2$ is defined as
\begin{equation}\label{infconv}
(\phi_1 \, \Box \, \phi_2)(\xi):=\inf_{\xi' \in \Ms} \bigl\{ \phi_1(\xi-\xi')+\phi_2(\xi') \bigr\},
\end{equation}
which turns out to be a convex function. It can be shown that
\[
  \phi_1 \, \Box \, \phi_2=(\phi_1^*+\phi_2^*)^*.
\]
Moreover, if $\phi_1$ and $\phi_2$ are nonnegative, convex, $\phi_1(0)=0$, and $\phi_2$ is positively $1$-homogeneous, then $\phi_1 \, \Box \, \phi_2$ is the convex hull of $\phi_1\wedge\phi_2:=\min(\phi_1,\phi_2)$.

If $\psi,\phi_1,\phi_2$ are defined on $\Msd$ only, then the convex conjugate and the inf-convolution can be defined as functions on $\Msd$, taking respectively the supremum and the infimum in the formulas~\eqref{convconj} and~\eqref{infconv} over the space $\Msd$.

\subsection{Description of the problem}

Let $\O$ be a bounded open set of $\R^n$. For every $u \in H^1(\O;\R^n)$, $\chi \in L^\infty(\O;\{0,1\})$ and any $\e>0$, we define the following brittle damage energy functional:
$$E_\e(u,\chi):=\frac12 \int_\O \bigl[ \eta_\e \chi \mathbf{A}_w + (1-\chi)\mathbf{A}_s \bigr] e(u):e(u)\, dx + \frac{\kappa}{\e} \int_\O \chi\, dx.$$
In the previous expression, $\kappa>0$, $\eta_\e>0$, and $\mathbf{A}_w$, $\mathbf{A}_s$ are symmetric fourth-order tensors satisfying
\begin{equation}\label{eq:growth}
c_i\, \Id \leq \mathbf{A}_i \leq c'_i\, \Id \quad \text{ for } i\in \{w,s\}
\end{equation}
as quadratic forms over $\Ms$, for some constants $ c_w, c_s, c'_w,c'_s>0$. 

We assume that $\eta_\e \to 0$ as $\e \to 0$, so that one can suppose that $\eta_\e \mathbf{A}_w \leq \mathbf{A}_s$ as quadratic forms. The Hooke tensors $\eta_\e \mathbf{A}_w$ and $\mathbf{A}_s$ represent respectively the elasticity coefficients of a weak and a strong material. The weak, or damaged, part of the body has elastic properties which degenerate. At the same time, the toughness $\kappa/\e \to+\infty$ as $\e \to 0$ forces the damaged zones to concentrate on vanishingly small sets. Our goal is to understand the behavior of the previous brittle damage functional as $\e \to 0$ by means of a  $\Gamma$-convergence analysis.

Let us define for all $\xi \in \Ms$,
\begin{equation}
\label{d:fg}
f(\xi):=\frac12 \mathbf{A}_s\xi:\xi, \quad g_\e(\xi):=\frac{\eta_\e}{2}\mathbf{A}_w\xi:\xi +\frac{\kappa}{\e}
\end{equation}
and
\[
  W_\e(\xi) := f(\xi) \wedge g_\e(\xi) = \min\{f(\xi), g_\e(\xi)\}.
\]
Then, we can write
\[
  E_\e(u,\chi) = \frac12 \int_\O (1-\chi)f(e(u)) + \chi g_\e(e(u)) \, dx.
\]

For all $(u,\chi) \in L^1(\O;\R^n) \times L^1(\O)$, we further set
\[
\E_\e(u,\chi) :=
\begin{cases}
E_\e(u,\chi) & \text{ if } (u,\chi) \in H^1(\O;\R^n) \times L^\infty(\O;\{0,1\}),\\
+\infty & \text{ otherwise.}
\end{cases}
\]
Let us remark that, provided suitable (Dirichlet) boundary conditions are applied on some portion of the boundary and/or external body loads are incorporated into the model, the application of Poincar\'e and Korn type inequalities (see~\cite{Temam}) show that the condition $e(u) \in L^2(\O;\Ms)$ is equivalent to $\nabla u \in L^2(\O;\mathbb M^{n \times n})$.

We consider the $\Gamma$-lower and $\Gamma$-upper limits $\E_0'$ and $\E_0'':L^1(\O;\R^n) \times L^1(\O) \to [0,+\infty]$, respectively, of $(\E_\e)_{\e>0}$, that is (see~\cite{DM}),  for all $(u,\chi) \in L^1(\O;\R^n) \times L^1(\O)$,
$$\E_0'(u,\chi):=\inf\left\{\liminf_{\e \to 0}\E_\e(u_\e,\chi_\e) :\;  (u_\e,\chi_\e) \to (u,\chi) \text{ in }L^1(\O;\R^n) \times L^1(\O)\right\},$$
and
$$\E_0''(u,\chi):=\inf\left\{\limsup_{\e \to 0}\E_\e(u_\e,\chi_\e) :\;  (u_\e,\chi_\e) \to (u,\chi) \text{ in }L^1(\O;\R^n) \times L^1(\O)\right\}.$$
If $\E_0' = \E_0''$, then this functional is the $\Gamma$-limit of the sequence $(\E_\e)_{\e>0}$. It is our task in the following to explicitly identify this functional. It turns out that this depends on the sequence $(\eta_\e)_{\e>0}$ (only) through the value
\begin{equation} \label{eq:alpha}
\alpha :=\lim_{\e\to 0} \frac{\eta_\e}{\e} \in [0,+\infty].
\end{equation}
We consider the sequence $(\eta_\e)_{\e>0}$ fixed, so we do not make the dependence on $\alpha$ explicit in our notation.

We begin our analysis by identifying the domain of finiteness of the $\Gamma$-limit.

\begin{lem}\label{lem:comp1}
Let $(u,\chi) \in L^1(\O;\R^n) \times L^1(\O)$ be such that $\E_0'(u,\chi)<+\infty$. Then, $\chi=0$ a.e.\ in $\O$ and  if further $\alpha > 0$, then $u \in BD(\O)$.
\end{lem}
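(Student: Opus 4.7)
\medskip

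\noindent\textbf{Proof plan.} Fix a sequence $(u_\e,\chi_\e)\to (u,\chi)$ in $L^1(\O;\R^n)\times L^1(\O)$ realizing the $\Gamma$-liminf, and assume $\liminf_\e \E_\e(u_\e,\chi_\e)=:C<+\infty$. Along a subsequence (not relabelled) we have $\E_\e(u_\e,\chi_\e)\le C+1$, so in particular $(u_\e,\chi_\e)\in H^1(\O;\R^n)\times L^\infty(\O;\{0,1\})$.

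The first claim is immediate: from the toughness term we get
\[
\int_\O \chi_\e\, dx \;\le\; \frac{\e}{\kappa}\,\E_\e(u_\e,\chi_\e) \;\le\; \frac{\e(C+1)}{\kappa}\;\xrightarrow[\e\to 0]{}\;0,
\]
so $\chi_\e\to 0$ in $L^1(\O)$, and since $\chi_\e\to\chi$ in $L^1(\O)$ as well, $\chi=0$ a.e.

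For the second claim, suppose $\alpha>0$, so that $\e/\eta_\e \to 1/\alpha < +\infty$ and in particular the ratio is bounded by some constant $M$ for $\e$ small. Using the ellipticity assumption~\eqref{eq:growth} we obtain
\[
\int_\O (1-\chi_\e)|e(u_\e)|^2\, dx \;+\; \eta_\e\!\int_\O \chi_\e |e(u_\e)|^2\, dx \;\le\; C'
\]
for some $C'>0$ depending on $c_s,c_w$ and $C$. The strategy is to get an $L^1$-bound on $e(u_\e)$ by splitting according to $\chi_\e$ and applying Cauchy--Schwarz on each piece. For the undamaged region,
\[
\int_\O (1-\chi_\e)|e(u_\e)|\, dx \;\le\; |\O|^{1/2}\biggl(\int_\O (1-\chi_\e)|e(u_\e)|^2\, dx\biggr)^{1/2} \;\le\; C'';
\]
for the damaged region,
\[
\int_\O \chi_\e |e(u_\e)|\, dx \;\le\; \biggl(\int_\O \chi_\e \eta_\e |e(u_\e)|^2\, dx\biggr)^{1/2}\biggl(\int_\O \frac{\chi_\e}{\eta_\e}\, dx\biggr)^{1/2} \;\le\; C'\,\sqrt{\frac{M(C+1)}{\kappa}},
\]
where in the last factor I used $\int_\O \chi_\e\, dx\le \e(C+1)/\kappa$ together with $\e/\eta_\e\le M$. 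Hence $(e(u_\e))_\e$ is bounded in $L^1(\O;\Ms)$.

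To conclude, observe that since $u_\e\to u$ in $L^1(\O;\R^n)$, the distributions $Eu_\e=e(u_\e)\LL^n$ converge to $Eu$ in $\mathcal D'(\O;\Ms)$. By the $L^1$-bound just proved, the sequence $(Eu_\e)_\e$ is bounded in $\M(\O;\Ms)$, so by weak-$*$ compactness a subsequence converges to some $\mu\in\M(\O;\Ms)$; by uniqueness of distributional limits $\mu=Eu$, whence $Eu\in\M(\O;\Ms)$ and $u\in BD(\O)$.

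The only nontrivial point is the Cauchy--Schwarz splitting above, which exploits precisely the trade-off built into $E_\e$: the small measure of $\{\chi_\e=1\}$ (of order $\e$) compensates the degeneracy $\eta_\e\sim\alpha\e$ of the weak Hooke tensor, giving an $L^1$-control of $e(u_\e)$ that fails when $\alpha=0$.
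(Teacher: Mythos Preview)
Your proof is correct and follows the same strategy as the paper's: deduce $\chi=0$ from the toughness term, then balance the degenerate elastic energy on $\{\chi_\e=1\}$ against the small measure of that set (of order $\e$) to obtain a uniform $L^1$-bound on $e(u_\e)$. The only cosmetic difference is that the paper applies Young's inequality pointwise to derive the bound $W_\e(\xi)\ge c|\xi|-1/c$ (which it reuses later, e.g.\ in Proposition~\ref{prop:W_alpha}), whereas you apply Cauchy--Schwarz directly at the integral level; for the purposes of this lemma the two are equivalent.
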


\begin{proof}
Let $(u_\e,\chi_\e)_{\e>0}$ be a sequence such that $(u_\e,\chi_\e) \to (u,\chi)$ in $L^1(\O;\R^n) \times L^1(\O)$ and, for some $\delta > 0$,
$$\liminf_{\e\to 0} \E_\e(u_\e,\chi_\e)\leq\E_0'(u,\chi)+\delta<+\infty.$$
Let us extract a subsequence $(u_k,\chi_k)_{k \in \N} := (u_{\e_k},\chi_{\e_k})_{k \in \N}$ of $(u_\e,\chi_\e)_{\e>0}$ such that
$$\lim_{k \to \infty} \E_{\e_k}(u_k,\chi_k)=\liminf_{\e \to 0}\E_\e(u_\e,\chi_\e)<+\infty.$$
This implies that, for $k$ large enough, $u_k \in H^1(\O;\R^n)$, $\chi_k \in L^\infty(\O;\{0,1\})$, and
\[ \label{eq:energy-bound}
M:=\sup_{k \in \N} E_{\e_k}(u_k,\chi_k)<+\infty.
\]
From this energy bound first observe that
\[
\int_\O \chi_k\, dx \leq \frac{M\e_k}{\kappa} \to 0 \quad \text{as }k \to \infty
\]
which shows that $\chi=0$ a.e.\ in $\O$.

\medskip

Since $\mathbf{A}_s\xi:\xi\geq  c_s|\xi|^2$ and $\mathbf{A}_w\xi:\xi\geq  c_w |\xi|^2$, Young's inequality yields
$$W_\e(\xi) \geq \min\left\{\frac{ c_s}{2}|\xi|^2,\sqrt{\frac{2\eta_\e\kappa  c_w}{\e}}|\xi| \right\}.$$
If $\eta_\e/\e \to \alpha \in (0,+\infty]$, then we can find a constant $c>0$, only depending on $ c_w$, $ c_s$, $\kappa$, and $\alpha$, such that
\begin{equation}\label{eq:boundbelow}
W_\e(\xi) \geq c |\xi|-\frac{1}{c} \quad \text{ for all }\xi \in \Ms.
\end{equation}
As a consequence, we have
$$c \int_\O |e(u_k)|\, dx -\frac{\LL^n(\O)}{c}\leq \int_\O W_{\e_k}(e(u_k))\, dx \leq E_{\e_k}(u_k,\chi_k) \leq M.$$
This implies that the sequence $(u_k)_{k \in \N}$ is bounded in $BD(\O)$, and thus $u_k \wto u$ weakly* in $BD(\O)$ with $u \in BD(\O)$.
\end{proof}

\section{The Hencky regime}\label{Hencky}

In this section, we consider the case $\alpha \in (0,\infty)$. Our main result reads as follows.

\begin{thm}\label{thm:alpha=1}
Let $\O \subset \R^n$ ($n=2$ or $n=3$) be a bounded open set with Lipschitz boundary. Assume that $\mathbf{A}_w$ and $\mathbf{A}_s$ are isotropic tensors, i.e., for all $\xi \in \Ms$,  
\begin{align*}
\mathbf{A}_w \xi & = \lambda_w (\tr\xi)\, \Id+ 2 \mu_w \xi,\\
\mathbf{A}_s \xi & = \lambda_s (\tr\xi)\, \Id+ 2 \mu_s \xi,
\end{align*}
where $\lambda_i>0$ and $\mu_i>0$ are the Lam\'e coefficients. If
\[
  \alpha := \lim_{\e \to 0}\frac{\eta_\e}{\e} \in (0,\infty),
\]
then the functionals $\E_\e$ $\Gamma$-converge as $\e \to 0$ with respect to the strong $L^1(\O;\R^n) \times L^1(\O)$-topology to the functional $\E_0:L^1(\O;\R^n) \times L^1(\O) \to [0,+\infty]$ defined by
\[
\E_0(u,\chi):=
\begin{cases}
\ds   \int_\O \overline W(e(u))\, dx + \int_\O \sqrt{2\alpha\kappa h\left( \frac{dE^s u}{d|E^s u|} \right)}\, d|E^s u|& \text{ if } \chi=0\text{ a.e.\ and } u \in BD(\O),\\
+\infty & \text{ otherwise.}
\end{cases}
\]
Here, the limit integrand is given by the infimal convolution
\[
\overline W:=  f \, \Box \, \sqrt{2\alpha\kappa h},
\]
where, if $\xi_1 \leq \cdots \leq \xi_n$ denote the ordered eigenvalues of $\xi \in \Ms$,
\begin{equation}\label{eq:fh}
f(\xi) := \frac12 \mathbf{A}_s\xi:\xi, \quad h(\xi) :=\mu_w \left(\sum_{i=1}^n|\xi_i| \right)^2+(\lambda_w+\mu_w) \left(\sum_{i=1}^n \xi_i \right)^2.
\end{equation}
\end{thm}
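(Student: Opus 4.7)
The plan is to establish the $\Gamma$-convergence in two matching halves: a $\Gamma$-limsup (upper) bound built from explicit recovery sequences, and a $\Gamma$-liminf (lower) bound obtained by combining Young's inequality with a compensated-compactness argument of null-Lagrangian type. By Lemma~\ref{lem:comp1} the domain of finiteness of $\E_0'$ is contained in $\{\chi=0\} \times BD(\O)$, so I restrict to targets $(u,0)$ with $u\in BD(\O)$. As a preliminary step I check that $\overline W$ is convex with quadratic behaviour near the origin and linear growth at infinity, with recession function $\overline W^\infty=\sqrt{2\alpha\kappa h}$, so that $\E_0$ is a natural lower semicontinuous convex integral functional on the measure $Eu$.

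\textbf{Upper bound.} Using the infimal-convolution formula, for an affine map $u(x)=\xi x$ I split $\xi=e+p$ with $f(e)+\sqrt{2\alpha\kappa h(p)}$ arbitrarily close to $\overline W(\xi)$ and construct a recovery pair that realises $e$ as an elastic strain and $p$ as a ``damaged'' strain concentrated on a set of vanishing volume. The clean case is $p=a\odot b$ rank-one symmetric: a lamination perpendicular to $b$, with jumps of amplitude proportional to $a/\e$ supported on strips of width $\sim\e$, saturates the equality $\sqrt{\mathbf{A}_w p:p}=\sqrt{h(p)}$ and achieves the optimal cost via equality in Young's inequality. A general $p$ is handled by diagonalising it and superposing such laminations along its eigendirections, exploiting that $h$ is built from the absolute values of the eigenvalues (Lemma~\ref{lem:aodotb}). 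The passage from affine maps to general $u\in BD(\O)$ is done by piecewise-affine approximation of the absolutely continuous part of $Eu$ and by a blow-up argument for the singular part $E^s u$, using that $\overline W^\infty$ is positively $1$-homogeneous.

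\textbf{Lower bound.} Fix $(u_\e,\chi_\e)\to(u,0)$ in $L^1$ with $\sup_\e\E_\e(u_\e,\chi_\e)\le M$. Young's inequality on $\{\chi_\e=1\}$ gives
\[
\frac{\eta_\e}{2}\mathbf{A}_w e(u_\e){:}e(u_\e)+\frac{\kappa}{\e}\;\geq\;\sqrt{\tfrac{2\eta_\e\kappa}{\e}\,\mathbf{A}_w e(u_\e){:}e(u_\e)},
\]
which asymptotically produces the density $\sqrt{2\alpha\kappa\,\mathbf{A}_w e(u_\e){:}e(u_\e)}$ on the damaged set. To reach the claimed density $\sqrt{2\alpha\kappa h(\cdot)}$, I must close the gap $h(\xi)-\mathbf{A}_w\xi{:}\xi$, which a direct computation identifies with $4\mu_w(\det\xi)^+$ in two dimensions and, via Lemma~\ref{lem:polyconvex}, is controlled by the positive parts of the eigenvalues of $\cof(\xi)$ in three dimensions. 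The key observation is that the energy bound forces $\sqrt{\e}\,u_\e\wto 0$ weakly in $H^1(\O;\R^n)$ (with $e(\sqrt{\e}u_\e)$ uniformly bounded in $L^2$ by the $\eta_\e/\e\to\alpha$ balance, and a Korn inequality after fixing an infinitesimal rigid motion), so the weak continuity of null Lagrangians in $H^1$ yields $\e\,\det(\nabla u_\e)\wto 0$ in 2D, and $\e\,\cof(\nabla u_\e)\wto 0$ in 3D, in the sense of distributions.

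\textbf{Main obstacle.} The hard part is to convert this null-Lagrangian cancellation on the full gradient into a genuine lower bound for the symmetric gradient on the oscillating damaged set $\{\chi_\e=1\}$. Three issues arise simultaneously: the passage from $\det\nabla u_\e$ to $\det e(u_\e)$ introduces a skew-part correction of order $|\mathrm{skew}(\nabla u_\e)|^2$, which must be absorbed using Korn-type identities and the energy bound on $e(u_\e)$; the positive-part operation $(\,\cdot\,)^+$ is not weakly continuous in $L^1$, so it must be controlled from below by polyconvex surrogates compatible with the null-Lagrangian structure; and localising on $\{\chi_\e=1\}$ requires delicate cut-offs so that the damage gap is paid only where damage actually occurs. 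Once these are in place, one obtains
\[
\liminf_{\e\to 0}\int_\O \chi_\e\sqrt{\tfrac{2\eta_\e\kappa}{\e}\mathbf{A}_w e(u_\e){:}e(u_\e)}\,dx \;\geq\; \liminf_{\e\to 0}\int_\O \chi_\e\sqrt{2\alpha\kappa\,h(e(u_\e))}\,dx
\]
up to infinitesimal errors. Combining with the elastic estimate $\tfrac12\mathbf{A}_s e(u_\e){:}e(u_\e)\geq f(e(u_\e))$ on $\{\chi_\e=0\}$ and invoking a Reshetnyak-type lower semicontinuity for convex integrands of measures with positively $1$-homogeneous recession, I recover the bulk integral $\int_\O\overline W(e(u))\,dx$ and the singular integral $\int_\O \overline W^\infty(dE^s u/d|E^s u|)\,d|E^s u|=\int_\O\sqrt{2\alpha\kappa h(dE^s u/d|E^s u|)}\,d|E^s u|$, which matches $\E_0$.
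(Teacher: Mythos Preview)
Your high-level strategy is right, and you have correctly diagnosed the main difficulty in the lower bound: closing the gap $h(\xi)-\mathbf{A}_w\xi{:}\xi=4\mu_w(\det\xi)^+$ (or its cofactor analogue in 3D) using the null-Lagrangian weak continuity of $\det\nabla v_\e$ for $v_\e=\sqrt{\eta_\e}\,u_\e\wto 0$ in $H^1$. However, the three ``obstacles'' you list are real, and the resolutions you sketch do not match how the paper actually overcomes them; as stated, your argument has a genuine gap.

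\medskip

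\textbf{The displayed inequality is the wrong target.} You aim to show
\[
\liminf_{\e\to 0}\int_\O \chi_\e\sqrt{\tfrac{2\eta_\e\kappa}{\e}\mathbf{A}_w e(u_\e){:}e(u_\e)}\,dx \;\geq\; \liminf_{\e\to 0}\int_\O \chi_\e\sqrt{2\alpha\kappa\,h(e(u_\e))}\,dx,
\]
but since $h\geq \mathbf{A}_w\,\cdot:\cdot$ this is pointwise the wrong way; even with the compensation you hope for, the paper does \emph{not} prove this inequality and proceeds differently. The key device is a \emph{linear} parametrisation of the positive part: in 2D one writes, for $r\in[0,1]$,
\[
h_r(\xi):=\mathbf{A}_w\xi{:}\xi+4\mu_w\,r\det\xi,\qquad h(\xi)=\sup_{r\in[0,1]}h_r(\xi),
\]
and observes that each $h_r$ is a nonnegative quadratic form, so $\sqrt{2\alpha\kappa h_r}$ is convex. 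Young's inequality then gives
\[
g_\e(\xi)\;\geq\;\sqrt{2\alpha\kappa h_r(\xi)}\;-\;2\mu_w\eta_\e\,r\det\xi\;+\;o(1)|\xi|,
\]
with the correction $-2\mu_w\eta_\e r\det\xi$ involving $\det\xi$ itself (no positive part). Crucially, the \emph{same} correction is inserted on the undamaged side via $f(\xi)\geq (1-\gamma)f(\xi)-2\mu_w\eta_\e r\det\xi$ for small $\e$, so that it factors out of the minimum:
\[
(f\wedge g_\e)(\xi)\;\geq\;\bigl((1-\gamma)f\wedge\sqrt{2\alpha\kappa h_r}\bigr)(\xi)\;-\;2\mu_w\eta_\e r\det\xi\;+\;o(1)|\xi|.
\]
This simultaneously resolves your ``positive part'' and ``localisation on $\{\chi_\e=1\}$'' obstacles: there is no $\chi_\e$ left multiplying the correction, and no $(\cdot)^+$ appears. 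Your phrase ``polyconvex surrogates'' may be gesturing at this, but the point is that the surrogate must be subtracted from \emph{both} branches of the min, not only from the damaged branch.

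\medskip

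\textbf{Skew part.} You propose Korn-type identities to pass from $\det\nabla u_\e$ to $\det e(u_\e)$; this is unnecessary. The paper uses the one-sided inequality $\det(e(v_\e))\leq\det(\nabla v_\e)$ in 2D (and $A{:}\cof(e(v_\e))\leq A{:}\cof(\nabla v_\e)$ for $A\in{\rm conv}(M)$ in 3D), which follows from $\cof(F^{\rm skew})\geq 0$. Combined with $\det(\nabla v_\e)\wto 0$ (resp.\ $\cof(\nabla v_\e)\wto 0$), this gives $\limsup_\e\eta_\e\int\varphi\det(e(u_\e))\,dx\leq 0$, which is exactly the sign needed.

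\medskip

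\textbf{Recovering $h$ from $h_r$.} After lower semicontinuity for each fixed $r$ (or $A$ in 3D) yields the bound with $f\,\Box\,\sqrt{2\alpha\kappa h_r}$, one passes to the supremum over $r$ via a minimax argument (convex in $\xi'$, concave in $r$) to obtain $f\,\Box\,\sqrt{2\alpha\kappa h}$; this step, together with a super-additivity/supremum-of-measures lemma, is essential and absent from your outline.

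\medskip

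\textbf{Upper bound.} Your explicit lamination construction is correct for affine maps (indeed the paper presents exactly such examples), but your extension to general $u\in BD(\O)$ via piecewise-affine approximation plus blow-up is undersketched. The paper takes an abstract shortcut: the pointwise limit $SQW_\e\to\overline W$ (Proposition~\ref{prop:Walpha}) plus dominated convergence gives the bound for $u\in W^{1,\infty}$, then density in $LD(\O)$, then a relaxation theorem in $BD(\O)$. Either route works, but be aware that your constructive one requires substantially more detail than you have given.
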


\begin{rem} \label{rem:DPR}
According to~\cite[Theorem 1.7]{DPR}, if $u \in BD(\O)$, then for $|E^s u|$-a.e.\ $x \in \O$, there exist $a(x), b(x) \in \R^n \setminus \{0\}$ such that 
$$\frac{dE^s u}{d|E^s u|}(x)=a(x) \odot b(x).$$
Therefore, also using Proposition~\ref{prop:W_alpha} below, the $\Gamma$-limit $\E_0(u,\chi)$ for $\chi=0$ a.e.\ and $u \in BD(\O)$ can alternatively be expressed as 
\[
\E_0(u,\chi) = \int_\O \overline W(e(u))\, dx + \sqrt{2\alpha \kappa} \int_\O \sqrt{\mathbf{A}_w \frac{dE^s u}{d|E^s u|}:\frac{dE^s u}{d|E^s u|} }\, d|E^s u|.
\]
\end{rem}

\subsection{Explanatory examples}

Before addressing the proof of Theorem~\ref{thm:alpha=1}, let us explain the appearance of the term $\sqrt{2 \alpha\kappa h}$ in $\overline W$, in the simplified case where $\Omega=Q=(0,1)^2$ is a cube in $\R^2$, $\eta_\e=\e$, and $u$ is an affine function.

\medskip

\noindent {\it Case 1:} Let $u(x)=\xi x$, where $\xi \in \mathbb M^{2 \times 2}_{\rm sym}$ is a diagonal matrix whose eigenvalues $\xi_1$ and $\xi_2$ satisfy $\xi_1\xi_2>0$. We consider integers $N_\e \in \N$  such that $N_\e \to +\infty$ as $\e \to 0$, and we subdivide the interval $(0,1)$ into $N_\e + 1$ sub-intervals of length $1/(N_\e+1)$. For each $i = 0, 1, \ldots, N_\e+1$, we define $s_i^\e:=i/(N_\e+1)$. For $j=1,2$ we choose
$$\delta_\e^j:=\frac{|\xi_j| \sqrt{\mathbf{A}_w (e_j \otimes e_j):(e_j \otimes e_j)}}{2\sqrt{2\kappa}} \cdot \frac{\e}{N_\e+1}$$
and set
$$u^{j}_\e(s):=
\begin{cases}
\xi_j s_{i-1}^\e + \xi_j \frac{s_i^\e- s_{i-1}^\e}{2\delta^j_\e}(s-s_i^\e+\delta^j_\e) & \text{ if }
\begin{cases}
s_i^\e-\delta_\e^j \leq s \leq s_i^\e+\delta^j_\e,\\
1\leq i \leq N_\e,
\end{cases}
\\
\xi_j s_i^\e & \text{ if }
\begin{cases}
s_{i}^\e+\delta_\e^j <s<s_{i+1}^\e-\delta^j_\e,\\
0\leq i \leq N_\e,
\end{cases}
\end{cases}
$$ 
and $u^j_\e$ is extended as a constant up to the boundary of $[0,1]$. We also introduce the sets
$$\Delta^{j}_\e:=\bigcup_{i=1}^{N_\e} (s_i^\e-\delta^j_\e,s_i^\e+\delta^j_\e), \quad D_\e^{j}:=\{x \in Q : \; x_j \in \Delta_\e^{j}\},$$
satisfying $\LL^1(\Delta^{j}_\e)=2N_\e\delta^j_\e \to 0$ as $\e \to 0$. Finally, we define the displacement and the damaged set by
$$u_\e(x):=(u_\e^{1}(x_1),u_\e^{2}(x_2)) \quad \text{ for all } x \in Q  \quad \text{ and }\quad D_\e:=D^{1}_\e \cup D^{1}_\e.$$
Note that $u_\e \to  u$ in $L^2(Q;\R^2)$ and $\LL^2(D_\e)\to 0$. We also observe that
\[
  e(u_\e)(x) = \sum_{j=1}^2 (u_\e^{j})'(x_j) e_j \otimes e_j  \quad\text{for a.e.\ } x \in D_\e;
\]
in particular, $e(u_\e)(x) = 0$ for a.e.\ $x \in Q \setminus D_\e$. Therefore, 
\begin{align*}
&\int_{\{x_1\in \Delta^{1}_\e,\ x_2\notin \Delta^{2}_\e\}}\Big(\frac\e2 \mathbf{A}_w e(u_\e):e(u_\e)+\frac{\kappa}{\e}\Big)\, dx\\
&\qquad =\int_{\{x_1\in \Delta^{1}_\e,\ x_2\notin \Delta^{2}_\e\}}\Big(\frac\e2 \mathbf{A}_w (u_\e^{1})'(x_1) e_1 \otimes e_1: (u_\e^{1})'(x_1) e_1 \otimes e_1+\frac{\kappa}{\e}\Big)\, dx\\
&\qquad =\int_{\{x_1\in \Delta^{1}_\e,\ x_2\notin \Delta^{2}_\e\}}\Big(\frac\e2\Big(\frac{\xi_1}{2\delta^1_\e (N_\e+1)}\Big)^2 \mathbf{A}_w (e_1 \otimes e_1): (e_1 \otimes e_1)+\frac{\kappa}{\e}\Big)\, dx\\
&\qquad \leq 2\delta^1_\e N_\e \Big(\frac\e2\Big(\frac{\xi_1}{2\delta^1_\e (N_\e+1)}\Big)^2 \mathbf{A}_w (e_1 \otimes e_1):( e_1 \otimes e_1)+\frac{\kappa}{\e}\Big)\\
&\qquad =\frac{\xi_1^2\e N_\e}{4\delta^1_\e (N_\e+1)^2} \mathbf{A}_w (e_1 \otimes e_1):( e_1 \otimes e_1)+
\frac{2\kappa \delta^1_\e N_\e}{\e}\\
&\qquad \leq |\xi_1| \sqrt{2\kappa\mathbf{A}_w (e_1 \otimes e_1):( e_1 \otimes e_1)}.
\end{align*}
A similar computation can be performed to show that
$$\int_{\{x_2\in \Delta^{2}_\e,\ x_1\notin \Delta^{1}_\e\}}\Big(\frac\e2 \mathbf{A}_w e(u_\e):e(u_\e)+\frac{\kappa}{\e}\Big)\, dx \leq |\xi_2| \sqrt{2\kappa\mathbf{A}_w (e_2 \otimes e_2): (e_2 \otimes e_2)}.$$
Finally, we have that
$$\int_{\{x_1\in \Delta^{1}_\e,\ x_2\in \Delta^{2}_\e\}}\Big(\frac\e2 \mathbf{A}_w e(u_\e):e(u_\e)+\frac{\kappa}{\e}\Big)\, dx \to 0.$$
We conclude that
$$\limsup_{\e \to 0} E_\e(u_\e,\chi_{D_\e})\leq \sum_{j=1}^2|\xi_j| \sqrt{2\kappa\mathbf{A}_w (e_j \otimes e_j):( e_j \otimes e_j)}=\sqrt{2\kappa(\lambda_w+2\mu_w)}\sum_{j=1}^2|\xi_j|=\sqrt{2\kappa h(\xi)}$$
since the eigenvalues have the same sign.

\medskip

\noindent {\it Case 2:} Let now $\xi \in \mathbb M^{2 \times 2}_{\rm sym}$ be a diagonal matrix and assume that its eigenvalues satisfy $\xi_1 \xi_2 \leq 0$. Then, according to Lemma~\ref{lem:aodotb}, we have $\xi=a \odot b$ for some $a,b \in \R^2$. Let us consider the linear function
\[
u(x)= a (x \cdot b) \quad \text{ for all }x \in Q
\]
and notice that $e(u)=\xi$. Using the same notation as before, but setting this time
\[
\delta_\e:=\frac{\sqrt{h(a \odot b)}}{2\sqrt{2\kappa}} \cdot \frac{\e}{N_\e+1},
\]
we define
\[
w_\e(s):=
\begin{cases}
s_{i-1}^\e + \frac{s_i^\e- s_{i-1}^\e}{2\delta_\e}(s-s_i^\e+\delta_\e) & \text{ if }
\begin{cases}
s_i^\e-\delta_\e <s<s_i^\e+\delta_\e,\\
1\leq i\leq N_\e,
\end{cases}\\
s_i^\e & \text{ if }
\begin{cases}
s_{i}^\e+\delta_\e<s<s_{i+1}^\e-\delta_\e,\\
0\leq i \leq N_\e,
\end{cases}
\end{cases}
\]
and $w_\e$ is extended as a constant up to the boundary of $[0,1]$.
The displacement is now given by
\[
u_\e(x) := a w_\e(x \cdot b),
\]
while the damaged set is defined by
\[
D_\e:=\{x \in Q :\;  x \cdot a \in \Delta_\e\},  \qquad\text{where}\qquad
\Delta_\e:=\bigcup_{i=1}^{N_\e} (s_i^\e-\delta_\e,s_i^\e+\delta_\e).
\]
Again we have $u_\e \to u$ in $L^2(Q;\R^2)$ and $\LL^2(D_\e) \to 0$. 
Observe that $e(u_\e)(x)= (a \odot b) w_\e'(x \cdot b)$ for a.e.\ $x \in Q$ and so $e(u_\e)(x)=0$ for a.e.\ $x \in Q \setminus D_\e$. Then, from Proposition~\ref{prop:W_alpha} below we have $\mathbf{A}_w (a \odot b) : (a \odot b) = h(a \odot b)$, and so
\begin{align*}
E_\e(u_\e,\chi_{D_\e})
&=\int_{D_\e}\Big(\frac\e2 \mathbf{A}_w [w_\e'(x \cdot b) a \odot b] : [w_\e'(x \cdot b) a \odot b] +\frac{\kappa}{\e}\Big) \, dx\\
&=\int_{D_\e}\Big(\frac\e2\Big(\frac{1}{2\delta_\e (N_\e+1)}\Big)^2 h(a \odot b) + \frac{\kappa}{\e}\Big) \,dx\\
&\leq 2\delta_\e N_\e \Big(\frac\e2\Big(\frac{1}{2\delta_\e (N_\e+1)}\Big)^2 h(a \odot b) +\frac{\kappa}{\e}\Big)+o(1)\\
&=\frac{\e N_\e}{4\delta_\e (N_\e+1)^2} h(a \odot b) +
\frac{2\kappa \delta_\e N_\e}{\e}+o(1)\\
&=\sqrt{2\kappa h(a \odot b)}+o(1).
\end{align*}

In both cases, these explicit constructions show that $\sqrt{2\kappa h(\xi)}$ is an upper bound for the $\Gamma$-limit in the concentrating zone, at least when $u$ is affine and $e(u)$ is diagonal. This suggests that $\sqrt{2\kappa h(\xi)}$ will describe the (linear) slope at infinity of the effective energy density.

\subsection{Pointwise limit of relaxed energy densities}

We next investigate the pointwise properties of the functions $W^\e$. Let us denote by $SQW_\e$ the symmetric quasiconvex envelope of $W_\e$ given by
$$SQW_\e(\xi):=\inf_{\varphi \in \C^\infty_c((0,1)^n;\R^n)} \int_{(0,1)^n} W_\e(\xi+e(\varphi))\, dx,\quad \xi\in \Ms.$$
From~\cite[Proposition 5.2]{AL}, we know that it can be expressed as 
$$SQW_\e(\xi)=\min_{0 \leq \theta \leq 1} F_\e(\theta,\xi),$$
where 
\begin{align*}
F_\e(\theta,\xi)&:= \frac{\eta_\e}{2}\mathbf{A}_w\xi:\xi + \frac{\kappa\theta}{\e}+ (1-\theta) \max_{\tau \in \Ms} \left\{\tau:\xi -\frac12 (\mathbf{A}_s-\eta_\e\mathbf{A}_w)^{-1}\tau:\tau -\frac{\theta}{2\eta_\e}G(\tau) \right\}\\
&\phantom{:}= \frac{\eta_\e}{2}\mathbf{A}_w\xi:\xi + \frac{\kappa\theta^2}{\e}+ (1-\theta) \max_{\tau \in \Ms} \left\{\tau:\xi -\frac12 (\mathbf{A}_s-\eta_\e\mathbf{A}_w)^{-1}\tau:\tau +\frac{\theta}{2\eta_\e} \left(\frac{2\kappa\eta_\e}{\e}-G(\tau)\right) \right\}
\end{align*}
and, if $\tau_1 \leq \cdots \leq \tau_n$ are the ordered eigenvalues of $\tau$,
\begin{equation}\label{d:G}G(\tau):=
\begin{cases}
\frac{\tau_1^2}{\lambda_w+2\mu_w}& \text{ if } \frac{\lambda_w+2\mu_w}{2(\lambda_w+\mu_w)}(\tau_1+\tau_n)<\tau_1,\\
\frac{(\tau_1-\tau_n)^2}{4\mu_w}+\frac{(\tau_1+\tau_n)^2}{4(\lambda_w+\mu_w)} & \text{ if }  \tau_1 \leq \frac{\lambda_w+2\mu_w}{2(\lambda_w+\mu_w)}(\tau_1+\tau_n)\leq \tau_n,\\
\frac{\tau_n^2}{\lambda_w+2\mu_w}& \text{ if } \tau_n < \frac{\lambda_w+2\mu_w}{2(\lambda_w+\mu_w)}(\tau_1+\tau_n).
\end{cases}
\end{equation}
As it is remarked in~\cite{AL} (below Proposition 5.2 in \textit{loc.~cit.}), the maximization above is over a strictly concave function, so a maximizer indeed exists.

In the following result we identify the poinwise limit $\overline W$ of $SQW_\e$, which turns out to be a density typically encountered in plasticity theory, i.e.\ a quadratic function close to the origin and with linear growth at infinity.

\begin{prop}\label{prop:Walpha}
Setting $K:=\bigl\{\tau \in \Ms: \; G(\tau) \leq 2\alpha\kappa\bigr\}$, we have 
$$SQW_\e \to \overline W:=(f^*+I_{K})^*$$
pointwise on $\Ms$. 
\end{prop}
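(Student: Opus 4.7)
The plan is to compute the Allaire--Lods representation $SQW_\e(\xi)=\min_{\theta\in[0,1]}F_\e(\theta,\xi)$ in the limit $\e\to 0$ via the rescaling $\theta=t\eta_\e$ and to recognise the resulting inf-sup as the Lagrangian dual of the concave programme defining $\overline W$. For each fixed $t\ge 0$, plugging $\theta=t\eta_\e$ into the first form of $F_\e$ yields, in the limit $\e\to 0$,
\[
F_\e(t\eta_\e,\xi)\ \longrightarrow\ \tilde L(t,\xi):=\kappa\alpha t+\sup_{\tau\in\Ms}\Bigl\{\tau:\xi-f^*(\tau)-\tfrac{t}{2}G(\tau)\Bigr\},
\]
since $\tfrac{\eta_\e}{2}\mathbf A_w\xi:\xi\to 0$, $\tfrac{\kappa t\eta_\e}{\e}\to\kappa\alpha t$, $(1-t\eta_\e)\to 1$, $(\mathbf A_s-\eta_\e\mathbf A_w)^{-1}\to\mathbf A_s^{-1}$ and $\tfrac{t\eta_\e}{2\eta_\e}=\tfrac{t}{2}$. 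The supremum passes to the limit because the integrands are equi-strictly-concave in $\tau$ and uniformly coercive thanks to $f^*(\tau)\ge c|\tau|^2$, so the maximisers stay in a compact set of $\Ms$ independent of $\e$.

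With this convergence in hand, the $\limsup$ inequality is immediate from $SQW_\e(\xi)\le F_\e(t\eta_\e,\xi)$, passing to $\limsup_\e$ and then to $\inf_{t\ge 0}$. For the $\liminf$ inequality, I take $\theta_\e\in[0,1]$ realising the minimum; up to a subsequence $\theta_\e/\eta_\e\to t\in[0,+\infty]$. If $t<+\infty$ the same computation yields $F_\e(\theta_\e,\xi)\to\tilde L(t,\xi)$; if $t=+\infty$ then $\kappa\theta_\e/\e\to+\infty$ (since $\eta_\e/\e\to\alpha>0$), while the sup inside $F_\e$ is $\ge 0$ (attained at $\tau=0$), giving $F_\e(\theta_\e,\xi)\to+\infty$. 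In all cases, $\liminf_\e SQW_\e(\xi)\ge\inf_{t\ge 0}\tilde L(t,\xi)$.

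It remains to identify $\inf_{t\ge 0}\tilde L(t,\xi)=\overline W(\xi)$. Rearranging,
\[
\inf_{t\ge 0}\tilde L(t,\xi)=\inf_{t\ge 0}\sup_{\tau\in\Ms}\Bigl\{\tau:\xi-f^*(\tau)+\tfrac{t}{2}\bigl(2\alpha\kappa-G(\tau)\bigr)\Bigr\}
\]
is precisely the Lagrangian dual of $\max\{\tau:\xi-f^*(\tau):\,G(\tau)\le 2\alpha\kappa\}$. Since $G(0)=0<2\alpha\kappa$, Slater's condition is fulfilled and strong convex duality allows the exchange of the inf and the sup; the inner inf over $t\ge 0$ of an affine function of $t$ equals $\tau:\xi-f^*(\tau)$ if $G(\tau)\le 2\alpha\kappa$ and $-\infty$ otherwise, so we conclude $\sup_{\tau\in K}\{\tau:\xi-f^*(\tau)\}=(f^*+I_K)^*(\xi)=\overline W(\xi)$. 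The main technical obstacle is precisely the equi-coercivity in the limit argument: since the piecewise-quadratic $G$ provides only a nonnegative, possibly degenerate, penalty, coercivity in $\tau$ must be extracted from $f^*$ alone; convexity of $K$, needed both for Slater's condition and for strong duality, follows from the explicit expression of $G$ in \eqref{d:G}.
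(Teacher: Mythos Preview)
Your proof is correct and follows essentially the same strategy as the paper: rescale $\theta = t\eta_\e$, pass to the limit in the Allaire--Lods formula, and identify the resulting inf--sup via minimax/Lagrangian duality. The only minor difference is that the paper frames the argument as a $\Gamma$-convergence of $\theta\mapsto F_\e(\theta,\xi)$ and, for the lower bound, scales a fixed $\tau\in K$ to $\tau_\e:=\sqrt{\eta_\e/(\alpha\e)}\,\tau$ (so that $G(\tau_\e)\le 2\kappa\eta_\e/\e$ for every $\e$) rather than invoking equi-coercivity of the $\tau$-maximisation; the essential ideas coincide.
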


\begin{proof}
Fix $\xi \in \Ms$. Let us first prove that, as $\e \to 0$, $(F_\e(\cdot,\xi))_{\e>0}$ $\Gamma$-converges in $[0,1]$ to the function $F_0(\cdot,\xi)$ defined by $F_0(\theta,\xi):=\overline W(\xi)$ if $\theta=0$ and $F_0(\theta,\xi):=+\infty$ if $\theta\neq 0$.

\medskip

\noindent {\it Lower bound:} Let $(\theta_\e)_{\e>0}$ be a sequence in $[0,1]$. If $\liminf_\e F_\e(\theta_\e,\xi)=+\infty$, there is nothing to prove. Without loss of generality, we can therefore assume that $\liminf_\e F_\e(\theta_\e,\xi)<+\infty$. Moreover, up to a subsequence, we can also suppose that the previous lower limit is actually a limit, and that $\theta_\e\to \theta \in [0,1]$. Since $F_\e(\theta_\e,\xi) \geq \frac{\kappa\theta_\e}{\e}$ (choose $\tau = 0$), we deduce that $\theta=0$. We next estimate $F_\e$ from below as follows: for all $\tau \in \Ms$,
$$F_\e(\theta_\e,\xi)
\geq (1-\theta_\e) \left\{ \tau:\xi -\frac12 (\mathbf{A}_s-\eta_\e\mathbf{A}_w)^{-1}\tau:\tau +\frac{\theta_\e}{2\eta_\e} \left(\frac{2\kappa\eta_\e}{\e}-G(\tau)\right) \right\}.$$
Let $\tau \in K$, i.e.\ $G(\tau) \leq 2\alpha \kappa$. For every $\e$, we define $\tau_\e:=\sqrt{\frac{\eta_\e}{\alpha \e}}\tau$, for which ($G$ being $2$-homogeneous) $G(\tau_\e) \leq 2\kappa\eta_\e/\e$ and $\tau_\e \to \tau$.  Specifying the previous inequality to $\tau_\e$, we get that
$$F_\e(\theta_\e,\xi) \geq (1-\theta_\e) \left\{ \tau_\e:\xi -\frac12 (\mathbf{A}_s-\eta_\e\mathbf{A}_w)^{-1}\tau_\e:\tau_\e\right\}.$$
Passing to the limit as $\e \to 0$, and using that $\tau$ is arbitrary in $K$, we deduce that
$$\liminf_{\e \to 0}F_\e(\theta_\e,\xi) \geq \sup_{\tau \in K} \left\{\tau:\xi -\frac12 \mathbf{A}_s^{-1}\tau:\tau\right\}=(f^*+I_{K})^*(\xi)=\overline W(\xi).$$

\medskip 

\noindent {\it Upper bound:} If $\theta \neq 0$, the proof is immediate. We can thus assume without loss of generality that $\theta=0$. Let $\lambda\geq 0$ and set $\theta_\e:=\lambda\eta_\e \to 0$. Then, since $(\mathbf{A}_s-\eta_\e\mathbf{A}_w)^{-1} \geq \mathbf{A}_s^{-1}$ as quadratic forms,
\begin{align*}
F_\e(\theta_\e,\xi) &\leq \frac{\eta_\e}{2}\mathbf{A}_w\xi:\xi + \frac{\kappa\lambda^2\eta_\e^2}{\e}+\sup_{\tau \in \Ms} \left\{\tau:\xi - \frac12 \mathbf{A}_s^{-1}\tau:\tau+ \frac{\lambda}{2} \left(\frac{2\kappa\eta_\e}{\e} -G(\tau)\right)  \right\}\\
 &= \frac{\eta_\e}{2}\mathbf{A}_w\xi:\xi + \frac{\kappa\lambda^2\eta_\e^2}{\e}+\sup_{\tau \in \Ms} \left\{\tau:\xi - \frac12 \mathbf{A}_s^{-1}\tau:\tau+ \frac{\lambda}{2} \left(2\kappa\alpha -G(\tau)\right)  \right\}+\lambda\kappa\Big(\frac{\eta_\e}{\e}-\alpha\Big).
\end{align*}
Passing to the limit as $\e \to 0$ and then taking the infimum with respect to $\lambda\geq 0$, we get
$$\limsup_{\e \to 0} F_\e(\theta_\e,\xi) \leq \inf_{\lambda\geq 0}\sup_{\tau \in \Ms} \left\{\tau:\xi - \frac12 \mathbf{A}_s^{-1}\tau:\tau+ \frac{\lambda}{2} \big(2\alpha\kappa -G(\tau)\big)  \right\}.$$
{According to standard results on inequality-constrained optimization problems (see, e.g.,~\cite[Chapter VI, Proposition 2.3]{ET}), we have (note that the function inside the curly braces is concave in $\tau$ and affine in $\lambda$)
\begin{align*}
&\inf_{\lambda\geq 0}\sup_{\tau \in \Ms} \left\{\tau:\xi- \frac12 \mathbf{A}_s^{-1}\tau:\tau- \frac{\lambda}{2} \big(G(\tau)-2\alpha\kappa\big)  \right\}\\
 &\qquad =  \sup_{\tau \in \Ms} \inf_{\lambda\geq 0} \left\{\tau:\xi- \frac12 \mathbf{A}_s^{-1}\tau:\tau- \frac{\lambda}{2} \big(G(\tau)-2\alpha\kappa\big)  \right\} \\
 &\qquad = \sup_{\tau \in K} \left\{\tau:\xi-\frac12 \mathbf{A}_s^{-1}\tau:\tau\right\},
\end{align*}
 from which we deduce that
$$\limsup_{\e \to 0} F_\e(\theta_\e,\xi) \leq  \sup_{\tau \in K} \left\{\tau:\xi-\frac12 \mathbf{A}_s^{-1}\tau:\tau\right\}=(f^*+I_{K})^*(\xi)=\overline W(\xi).$$
}
\medskip

\noindent {\it Convergence of minimizers.} According to the fundamental theorem of $\Gamma$-convergence, we deduce that
$$SQW_\e(\xi)=\min_{0 \leq \theta \leq 1}F_\e(\theta,\xi) \to \min_{0 \leq \theta \leq 1}F_0(\theta,\xi)=\overline W(\xi),$$
which completes the proof of the proposition.
\end{proof}

The following result relates the function $h$ to the convex conjugate of the indicator function of the closed convex set $K$.

\begin{lem}\label{l:h}
For all $\xi \in \Ms$,
$$I_{K}^*(\xi)=\sqrt{2\alpha\kappa h(\xi)},$$
where $h$ is defined in~\eqref{eq:fh}. In particular, $\overline W=f \, \Box \, \sqrt{2\alpha\kappa h}$.
\end{lem}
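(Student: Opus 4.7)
I would compute the support function $I_K^*(\xi) = \sup_{\tau \in K} \tau:\xi$ explicitly, and then deduce the infimal-convolution identity via Fenchel duality. Since $G$ depends only on the ordered eigenvalues $\tau_1 \leq \cdots \leq \tau_n$ of $\tau$, both $K$ and $h$ are orthogonally invariant, so I may take $\xi = \mathrm{diag}(\xi_1, \ldots, \xi_n)$ with $\xi_1 \leq \cdots \leq \xi_n$; by Von Neumann's trace inequality the supremum is attained at $\tau$ diagonal in the same basis with ordered eigenvalues. Setting $p := \tau_1$ and $q := \tau_n$, the function $G$ depends only on $(p,q)$, so the intermediate eigenvalues may be freely chosen in $[p,q]$. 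Assigning $\tau_i = q$ whenever $\xi_i \geq 0$ and $\tau_i = p$ whenever $\xi_i < 0$ (which is monotone since $\xi$ is ordered) gives $\sum_i \tau_i \xi_i = qS^+ - pS^-$, where $S^+ := \sum_{\xi_i \geq 0} \xi_i$ and $S^- := -\sum_{\xi_i < 0} \xi_i$. Passing to $u := p+q$, $v := q-p \geq 0$, $A := \tr\xi = S^+ - S^-$, $B := \sum_i |\xi_i| = S^+ + S^-$, the problem becomes
\[
I_K^*(\xi) = \sup\Bigl\{ \tfrac{1}{2}(uA + vB) : v \geq 0,\ G(p,q) \leq 2\alpha\kappa \Bigr\}.
\]

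The three cases in the definition of $G$ partition the half-plane $\{v \geq 0\}$ into a middle wedge $\{v \geq \tfrac{\mu_w}{\lambda_w+\mu_w}|u|\}$, on which $G = \tfrac{v^2}{4\mu_w} + \tfrac{u^2}{4(\lambda_w+\mu_w)}$, and two outer wedges on which $G$ depends only on $p$, respectively on $q$; a short direct computation shows that $G$ is continuous across the two boundary rays. In the middle wedge the weighted Cauchy--Schwarz inequality yields
\[
\tfrac{1}{2}(uA + vB) \leq \sqrt{G \cdot \bigl((\lambda_w+\mu_w)A^2 + \mu_w B^2\bigr)} = \sqrt{G \cdot h(\xi)} \leq \sqrt{2\alpha\kappa\, h(\xi)},
\]
with equality when $(u^*,v^*)$ is proportional to $\bigl((\lambda_w+\mu_w)A,\, \mu_w B\bigr)$. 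For this optimizer $v^*/|u^*| = \tfrac{\mu_w B}{(\lambda_w+\mu_w)|A|} \geq \tfrac{\mu_w}{\lambda_w+\mu_w}$, the key point being the elementary inequality $B = S^+ + S^- \geq |S^+ - S^-| = |A|$; hence the Cauchy--Schwarz optimizer is admissible. In each of the two outer wedges the $G$-constraint bounds only one of $p$ or $q$ while the objective $qS^+ - pS^-$ is linear and monotone in the free variable, so the supremum is attained on the boundary with the middle wedge, where $G$ takes the same value by continuity; hence neither outer wedge can exceed the middle-wedge bound. This establishes $I_K^*(\xi) = \sqrt{2\alpha\kappa\, h(\xi)}$; testing the inequality $\tau:\eta \leq \sqrt{2\alpha\kappa\,h(\eta)}$ against the extremal $\eta$'s identified in each case further shows that $K$ coincides with its bipolar, so $K$ is closed and convex.

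Finally, combining $I_K^* = \sqrt{2\alpha\kappa\,h}$ with the identity $\overline W = (f^* + I_K)^*$ from Proposition~\ref{prop:Walpha} and the Fenchel duality $(f^* + I_K)^* = f \,\Box\, I_K^*$ (applicable because $f$ is a positive-definite quadratic form, hence convex, lsc and finite everywhere, while $I_K$ is proper convex lsc with $0 \in K$) yields $\overline W = f \,\Box\, \sqrt{2\alpha\kappa\,h}$. The main subtlety is the unified treatment of the three cases of $G$ in the support function computation: verifying that the Cauchy--Schwarz optimizer lies in the middle wedge (which reduces to $B \geq |A|$) and bounding the outer wedge suprema by the continuity of $G$ across the boundary rays.
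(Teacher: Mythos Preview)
Your argument is correct and reaches the same conclusion, but by a genuinely different route from the paper. The paper's proof is abstract: it observes that $k(\tau):=\sqrt{G(\tau)/(2\alpha\kappa)}$ is a closed gauge, invokes Rockafellar's polar--gauge identity $\tfrac12(k^\circ)^2=(\tfrac12 k^2)^*$, and then cites the Allaire--Lods relation $G^*(2\,\cdot\,)=h$ to conclude $I_K^*=\sqrt{2\alpha\kappa h}$. You instead compute the support function directly: reduce to diagonal matrices by orthogonal invariance and von~Neumann's inequality, parametrise by the extremal eigenvalues $(p,q)$, and solve the resulting two-variable problem by weighted Cauchy--Schwarz, handling the three regimes of $G$ by pushing the outer wedges onto the common boundary via continuity. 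Your approach is more elementary and self-contained (no Rockafellar duality, no external citation for $G^*=h/4$), at the price of a short case analysis; the paper's is shorter but relies on more machinery.

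One small point worth making explicit: when all $\xi_i$ share the same sign, your sign-based assignment $\tau_i\in\{p,q\}$ produces $\tau=q\,\mathrm{Id}$ (or $p\,\mathrm{Id}$), whose actual extremal pair is $(q,q)$ rather than $(p,q)$, so membership in $K$ is not immediate from $G(p,q)\le 2\alpha\kappa$. This is harmless, because precisely in this case $B=|A|$, your Cauchy--Schwarz optimiser $(u^*,v^*)$ sits on the boundary ray $v=\tfrac{\mu_w}{\lambda_w+\mu_w}|u|$ of the middle wedge, and the continuity of $G$ across that ray gives $G(q^*,q^*)=G_{\mathrm{mid}}(p^*,q^*)=2\alpha\kappa$; hence $q^*\mathrm{Id}\in K$ and the bound is still attained. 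Adding one sentence to this effect would close the only visible gap.
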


\begin{proof}
For all $\xi \in \Ms$, we have 
$$I_{K}^*(\xi)=\sup_{\tau \in K}\tau:\xi = \sup_{k(\tau) \leq 1} \tau:\xi=k^\circ(\xi),$$
where $k(\tau):=\sqrt{G(\tau)/2\alpha\kappa}$ and $k^\circ$ is the polar function of $k$.  The function  $k$ is a nonnegative, real-valued, lower semicontinuous, and positively $1$-homogeneous function such that $k(0)=0$. According to the terminology of~\cite[Section 15]{R} $k$ is a closed gauge, and thanks to~\cite[Corollary 15.3.1]{R}, we get that
$$\frac12 (I_{K}^*)^2=\frac12 (k^\circ)^2=\left(\frac12 k^2\right)^*=\left(\frac{1}{4\alpha\kappa} G\right)^*.$$
From~\cite[Proof of Theorem 5.3]{AL} we have that
$$h(\xi)=\sup_{\tau \in \Ms} \bigl\{ 2\tau:\xi - G(\tau) \bigr\}=G^*(2\xi),$$
and since $h$ is $2$-homogeneous, $G^*=\frac14 h$. We thus infer that
$$\left(\frac{1}{4\alpha\kappa} G\right)^*(\xi)=\sup_{\tau \in \Ms}\left( \tau:\xi - \frac{1}{4\alpha\kappa}G(\tau)\right)=\frac{1}{4\alpha\kappa}G^*(4\alpha\kappa \xi)=\alpha\kappa h(\xi),$$
where we used again the fact that $h$ is $2$-homogeneous. We thus deduce that $I_{K}^*=\sqrt{2\alpha\kappa h}$.
\end{proof}

\begin{rem}
We observe that the function $\sqrt{2\alpha\kappa h}$ can also be considered as the pointwise limit of the symmetric quasiconvex envelope of the generalized Kohn--Strang functional (see~\cite{KS}), defined by
$$\bar g_\e (\xi):=
\begin{cases}
\frac{\eta_\e}{2}\mathbf{A}_w\xi:\xi + \frac{\kappa}{\e} & \text{ if }\xi \neq 0,\\
0 &  \text{ if }\xi = 0.
\end{cases}$$
Indeed, according to~\cite[Theorem 5.3]{AL}, the symmetric quasiconvex envelope of $\bar g_\e$ can be explicitely computed, namely
$$SQ\bar g_\e(\xi)=
\begin{cases}
\frac{\eta_\e}{2}\mathbf{A}_w\xi:\xi + \frac{\kappa}{\e}& \text{ if } h(\xi) \geq \frac{2\kappa}{\eta_\e\e},\\
\sqrt{\frac{2\eta_\e\kappa h(\xi)}{\e}} + \frac{\eta_\e}{2}(\mathbf{A}_w\xi:\xi - h(\xi)) & \text{ if } h(\xi) < \frac{2\kappa}{\eta_\e\e},
\end{cases}
$$
and so we observe that $SQ\bar g_\e \to \sqrt{2\alpha\kappa h}$ pointwise on $\Ms$.
\end{rem}

We are now in the position to prove several properties of the energy density $\overline W$.

\begin{prop}\label{prop:W_alpha}
The function $\overline W$ is convex,
\begin{equation}\label{eq:growthW_alpha}
c|\xi| - \frac{1}{c}\leq \overline W(\xi) \leq C|\xi| \quad \text{ for all }\xi \in \Ms,
\end{equation}
for some $c,C>0$, and 
\begin{equation}\label{eq:LipschitzW_alpha}
|\overline W(\xi_1) - \overline W(\xi_2)|\leq L|\xi_1-\xi_2| \quad \text{  for all }\xi_1, \; \xi_2 \in \Ms,
\end{equation}
for some $L>0$.
In addition, its recession function, defined for all $\xi \in \Ms$ by
$$\overline W^\infty(\xi):=\lim_{t \to +\infty}\frac{\overline W(t\xi)}{t},$$
exists and is given by
$$\overline W^\infty(\xi)=\sqrt{2\alpha\kappa h(\xi)}.$$
Finally, for all $a$, $b \in \R^n$,
$$\overline W^\infty(a \odot b)=\sqrt{2\alpha\kappa \mathbf{A}_w (a \odot b):(a \odot b)}.$$
\end{prop}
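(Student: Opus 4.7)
My proof proceeds by exploiting the two complementary descriptions of $\overline W$: the dual form $\overline W = (f^* + I_K)^*$ from Proposition~\ref{prop:Walpha}, and the primal infimal convolution $\overline W = f \, \Box \, \sqrt{2\alpha\kappa h}$ from Lemma~\ref{l:h}. Convexity is then immediate since any convex conjugate is convex and lower semicontinuous (equivalently, an infimal convolution of convex functions is convex, once one notes that $\sqrt{h}$ is convex as it equals $\sqrt{\mu_w\|\cdot\|_{S^1}^2 + (\lambda_w+\mu_w)(\tr\,\cdot)^2}$, the Euclidean norm of a pair of convex nonnegative expressions).

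For the growth bounds I would use the primal form. Choosing $\xi' = \xi$ in the infimum yields $\overline W(\xi) \leq \sqrt{2\alpha\kappa h(\xi)} \leq C|\xi|$, where the last inequality rests on $\sum_i |\xi_i| \leq \sqrt n\,|\xi|$. For the reverse bound I would combine $f(\eta) \geq \tfrac{c_s}{2}|\eta|^2$ with $h(\xi') \geq \mu_w |\xi'|^2$ (immediate from the definitions) and the elementary estimate $t^2 \geq t - \tfrac14$ to reduce the minimization to the triangle inequality $|\xi - \xi'| + |\xi'| \geq |\xi|$, producing $\overline W(\xi) \geq c|\xi| - 1/c$. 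Lipschitz continuity is clearest from the dual form:
\[
\overline W(\xi) = \sup_{\tau \in K} \Bigl\{ \tau:\xi - \tfrac{1}{2}\mathbf{A}_s^{-1}\tau:\tau \Bigr\}
\]
exhibits $\overline W$ as a pointwise supremum of functions affine in $\xi$ with slopes ranging in $K$, and is therefore Lipschitz with constant $\sup_{\tau \in K} |\tau|$. The one point needing verification is that $K$ is bounded, which I would obtain by inspecting the three cases defining $G$ in~\eqref{d:G} and using that the extreme eigenvalues $\tau_1$ and $\tau_n$ pinch the full spectrum since $\tau_1 \leq \tau_i \leq \tau_n$ for every $i$.

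For the recession function, the substitution $\xi' = t\eta$ in the infimal convolution gives
\[
\frac{\overline W(t\xi)}{t} = \inf_{\eta \in \Ms}\Bigl\{ t\, f(\xi - \eta) + \sqrt{2\alpha\kappa h(\eta)} \Bigr\}.
\]
Choosing $\eta = \xi$ yields the upper bound $\sqrt{2\alpha\kappa h(\xi)}$. For the matching lower bound I would split the infimum according to whether $|\eta - \xi| \geq \delta$ (where the first term blows up at rate $tc_s\delta^2/2$) or $|\eta - \xi| < \delta$ (where, using that the convex $1$-homogeneous function $\sqrt{h}$ is Lipschitz, the second term is at least $\sqrt{2\alpha\kappa h(\xi)} - C\delta$), then let $t \to \infty$ and finally $\delta \to 0$.

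Finally, the rank-one identity $\overline W^\infty(a \odot b) = \sqrt{2\alpha\kappa\,\mathbf{A}_w(a \odot b):(a \odot b)}$ reduces, via the recession formula just established, to proving $h(a \odot b) = \mathbf{A}_w(a \odot b):(a \odot b)$. By Lemma~\ref{lem:aodotb} the two possibly nonzero eigenvalues $\lambda_1, \lambda_2$ of $a \odot b$ satisfy $\lambda_1 \lambda_2 \leq 0$, which gives $(|\lambda_1| + |\lambda_2|)^2 = (\lambda_1 + \lambda_2)^2 - 4\lambda_1\lambda_2$; substituting this into the definition of $h$ and expanding $\mathbf{A}_w\xi:\xi = \lambda_w(\tr\xi)^2 + 2\mu_w|\xi|^2$ in terms of $\lambda_1, \lambda_2$, both sides collapse to $(\lambda_w + 2\mu_w)(\lambda_1 + \lambda_2)^2 - 4\mu_w\lambda_1\lambda_2$. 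I expect the only mildly delicate step of the whole argument to be verifying boundedness of $K$ through case analysis of $G$; everything else is essentially convex-analytic bookkeeping given the representations already established.
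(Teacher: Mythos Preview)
Your proposal is correct and follows the same overall strategy as the paper---exploiting both the dual representation $\overline W=(f^*+I_K)^*$ and the primal one $\overline W=f\,\Box\,\sqrt{2\alpha\kappa h}$---but several of your sub-arguments differ from the paper's in instructive ways. For the lower growth bound the paper does not work directly with the infimal convolution as you do; instead it invokes the characterization $\overline W=\lim_\e SQW_\e$ from Proposition~\ref{prop:Walpha} together with the uniform bound $W_\e(\xi)\geq c|\xi|-1/c$ already established in~\eqref{eq:boundbelow}, passing through $SQW_\e\geq W_\e^{**}$. Your direct argument via $t^2\geq t-\tfrac14$ and the triangle inequality is more self-contained. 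For Lipschitz continuity the paper bypasses your case analysis of $G$ entirely by citing the general fact that a convex function with linear growth is automatically globally Lipschitz; this is cleaner and avoids having to verify that $K$ is bounded. For the recession lower bound the paper uses attainment of the infimum: for each $t$ it picks a minimizer $\xi'_t$ in the inf-convolution, observes $tf(\xi-\xi'_t)\leq C|\xi|$ forces $\xi'_t\to\xi$, and concludes by continuity of $h$. Your $\delta$-splitting achieves the same end without invoking existence of a minimizer. The rank-one computation is essentially identical in both.
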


\begin{proof}
The function $\overline W=(f^*+I_{K})^*$ is convex and lower semicontinuous as the supremum of affine functions. Moreover, since $f^*+I_{K} \geq I_{K}$, we get that $\overline W \leq I_{K}^*=\sqrt{2\alpha\kappa h}$. Hence, for all $\xi \in \Ms$,
$$\overline W(\xi) \leq C |\xi|$$
for some $C>0$. Concerning the bound from below, according to~\eqref{eq:boundbelow} we have
$$\overline W(\xi)=\lim_{\e \to 0}SQW_\e(\xi) \geq \limsup_{\e \to 0} W^{**}_\e(\xi) \geq c|\xi| - \frac{1}{c},$$
which shows the validity of the growth and coercivity conditions~\eqref{eq:growthW_alpha}. Then, as $\overline W$ is a convex function with linear growth, it is in particular globally Lipschitz (see, e.g.,~\cite[Lemma 5.6]{Ri}) which shows the validity of~\eqref{eq:LipschitzW_alpha}.

Note that the convexity of $\overline W$ together with $\overline W(0)=0$ implies that, for all $\xi \in \Ms$, 
$$t \mapsto \frac{\overline W(t\xi)}{t}$$
is increasing, and thus that the limit as $t \to +\infty$ exists. The recession function is thus well defined on $\Ms$. In particular, since $\overline W \leq \sqrt{2 \alpha \kappa h}$ and since the latter function is positively $1$-homogeneous, we infer that  $\overline W^\infty \leq \sqrt{2 \alpha \kappa h}$. To prove the converse inequality, we use that $\overline W = f \, \Box \, I_{K}^*= f \, \Box \, \sqrt{2\alpha\kappa h}$. Then, by definition of inf-convolution, for all $t >0$, there exists some $\xi'_t \in \Ms$ such that
$$ \frac{\overline W(t\xi)}{t}= \frac{f(t\xi - t\xi'_t)}{t}+\frac{\sqrt{2\alpha\kappa h(t\xi'_t)}}{t}.$$
Since $f$ and $h$ are $2$-homogeneous, we get that
$$ \frac{\overline W(t\xi)}{t}= t f(\xi - \xi'_t)+\sqrt{2\alpha\kappa h(\xi'_t)}.$$
Using the growth condition~\eqref{eq:growthW_alpha} and the coercivity of the tensor $\mathbf{A}_s$, we have
$$\frac{ c_s}{2} t|\xi-\xi'_t|^2 \leq t f(\xi - \xi'_t) \leq t f(\xi - \xi'_t)+\sqrt{2\alpha\kappa h(\xi'_t)}=\frac{\overline W(t\xi)}{t} \leq C |\xi|,$$
proving that $\xi'_t \to \xi$ as $t \to +\infty$. Therefore, by continuity of $h$,
$$\overline W^\infty(\xi)=\lim_{t \to +\infty} \frac{\overline W(t\xi)}{t} \geq \limsup_{t \to +\infty}\sqrt{2\alpha\kappa h(\xi'_t)}=\sqrt{2\alpha\kappa h(\xi)},$$
which shows that $\overline W^\infty=\sqrt{2\alpha\kappa h}$.

Finally, if $\xi = a\odot b$, let us denote by $\xi_1, \ldots , \xi_n$ its eigenvalues. If $\xi$ has only one nonzero eigenvalue (say $\xi_1$), then
$$\sum_{i=1}^n|\xi_i|=|\xi_1| = \left|\sum_{i=1}^n\xi_i\right|,$$
which implies in view of~\eqref{eq:fh} that $h(\xi)=\mathbf{A}_w\xi:\xi$. If $\xi$ has two nonzero eigenvalues (say $\xi_1$ an $\xi_2$, we know from Lemma~\ref{lem:aodotb} that they must have opposite signs, hence (also using that $|\xi|^2 = \xi : \xi = \xi_1^2 + \xi_2^2$)
$$h(\xi)-\mathbf{A}_w\xi:\xi=2\mu_w (\xi_1\xi_2 + |\xi_1| |\xi_2|)=0,$$
which completes the proof of the proposition.
\end{proof}

\subsection{Proof of Theorem~\ref{thm:alpha=1}}

\begin{proof}
\noindent {\it Step 1: The upper bound.} We first assume that $u \in W^{1,\infty}(\O;\R^n)$. According to the dominated convergence theorem, we infer that
$$\int_\O \overline W(e(u))\, dx = \lim_{\e \to 0}\int_\O SQW_\e(e(u))\, dx.$$
For every $\e>0$,
$$v \in W^{1,1}(\O;\R^n) \mapsto \int_\O SQW_\e(e(v))\, dx$$
 is the $ L^1(\O;\R^n)$-lower semicontinuous envelope, restricted to $W^{1,1}(\O,\R^n)$, of
 $$v \in W^{1,1}(\O;\R^n) \mapsto \int_\O W_\e(e(v))\, dx,$$
see~\cite{BFT,ARDPR}. It is thus possible to find a recovery sequence $(u^\e_k)_{k \in \N} \subset W^{1,1}(\O;\R^n)$ such that
	$$\int_\O SQW_\e(e(u))\, dx=\lim_{k \to \infty} \int_\O W_\e(e(u^\e_k))\, dx.$$
Using a diagonalization argument, we extract a subsequence $k(\e) \to \infty$ as $\e \to 0$ such that $v_\e:=u_{k(\e)}^\e \to u$ in $L^1(\O;\R^n)$ and
$$\int_\O \overline W(e(u))\, dx=\lim_{\e \to 0} \int_\O W_\e(e(v_\e))\, dx.$$
Then, defining the damaged sets as
$$D_\e:=\left\{x \in \O : \; (\mathbf{A}_s-\eta_\e\mathbf{A}_w)e(v_\e)(x):e(v_\e)(x) \geq \frac{2\kappa}{\e}\right\},$$
we obtain by construction that
$$\E_0''(u,0) \leq \limsup_{\e \to 0} E_\e(v_\e,\chi_{D_\e})= \lim_{\e \to 0}\int_\O W_\e(e(v_\e))\, dx= \int_\O \overline W(e(u))\, dx.$$

Since $\Omega$ has a Lipschitz boundary, according to the density result~\cite[Proposition I.1.3]{Temam}, the previous inequality can be extended to any $u \in LD(\O)$. Indeed, let $(u_k)_{k \in \N}$ be a sequence in $W^{1,\infty}(\O;\R^n)$ such that $u_k \to u$ in $LD(\O)$. By lower semicontinuity of $\E''_0(\cdot,0)$ with respect to the $L^1(\O;\R^n)$ topology, and by continuity of  
$$LD(\O) \ni v \mapsto \int_\O \overline W(e(v))\, dx,$$
we deduce that
$$\E''_0(u,0)\leq \liminf_{k \to +\infty} \E''_0(u_k,0) \leq \int_\O \overline W(e(u_k))\, dx=\int_\O \overline W(e(u))\, dx.$$

Finally, if $u \in BD(\O)$, according to the relaxation result proved in~\cite[Corollary 1.10]{ARDPR}, we can find a sequence $(v_k)_{k \in \N}$ in $LD(\O)$ such that $v_k \to u$ in $L^1(\O;\R^n)$ and
$$ \int_\O \overline W( e(v_k))\, dx \to  \int_\O \overline W(e(u))\, dx+\int_\O \overline W^\infty\left(\frac{dE^s u}{d|E^s u|} \right) d|E^s u|.$$
Using again the lower semicontinuity of $\E''_0(\cdot,0)$ with respect to the $L^1(\O;\R^n)$ topology, we infer that
$$\E''_0(u,0)\leq \int_\O \overline W(e(u))\, dx + \int_\O \overline W^\infty\left(\frac{dE^s u}{d|E^s u|} \right) d|E^s u|=\E_0(u,0),$$
which completes the proof of the upper bound.

\medskip

\noindent {\it Step 2: The lower bound.} Let $(u_\e,\chi_\e)_{\e>0}$ be a sequence in $L^1(\O;\R^n) \times L^1(\O)$ such that $u_\e \to u \in BD(\O)$ in $L^1(\O;\R^n)$ and $\chi_\e \to 0$ in $L^1(\O)$. According to (the proof of) Lemma~\ref{lem:comp1} and the fact that $\eta_\e/\e \to \alpha \in (0,+\infty)$, we infer that 
\begin{equation}\label{eq:bnd}
\sup_{\e>0} \, \left\{ \|e(u_\e)\|_{L^1(\O)}+ \eta_\e \|e(u_\e)\|^2_{L^2(\O)}\right\} <+\infty.
\end{equation}
Let $v_\e:=\sqrt{\eta_\e} u_\e$. By the energy estimates~\eqref{eq:bnd} and Korn's and Poincar\'e's inequalities, this sequence is bounded
 in $H^1(\O;\R^n)$, hence $v_\e \wto 0$ weakly in $H^1(\O;\R^n)$. 
 
For every open set $\omega \subset \O$, let us define the set function
 $$\mu(\omega):=\liminf_{\e \to 0}\left\{ \frac12 \int_\omega[\eta_\e \chi_\e \mathbf{A}_w + (1-\chi_\e)\mathbf{A}_s]e(u_\e):e(u_\e)\, dx + \frac{\kappa}{\e} \int_\omega \chi_\e\, dx\right\} ,$$
 which is clearly a super-additive set function on disjoint open sets, i.e.\ $\mu(\omega_1 \cup \omega_2) \geq \mu(\omega_1) + \mu(\omega_2)$ for all open sets $\omega_1,\, \omega_2 \subset \O$, with $\overline \omega_1 \cap \overline \omega_2 =\emptyset$ and $\overline \omega_1 \cup \overline \omega_2 \subset \O$.
 
\medskip

\noindent {\it Step 2a: The two-dimensional case.} For all $r \in [0,1]$, we have by Young's inequality (see also~\eqref{d:fg}) for all $\xi \in \mathbb M^{2 \times 2}_{\rm sym}$,
\begin{align}\label{eq:0809}
g_\e(\xi) &= \frac{\eta_\e}{2} \bigl(\mathbf{A}_w \xi:\xi +4\mu_w r \det(\xi)\bigr)+\frac{\kappa}{\e}-2\mu_w{\eta_\e} r \det(\xi)\nonumber\\
&\geq \sqrt{2\kappa{\frac{\eta_\e}{\e}}\bigl(\mathbf{A}_w \xi:\xi+4\mu_wr \det(\xi)\bigr)}-2\mu_w{\eta_\e} r \det(\xi)\nonumber\\
&\geq \sqrt{2\alpha\kappa h_r(\xi)} -2\mu_w{\eta_\e} r \det(\xi)+ o(1)|\xi|,
\end{align}
where $o(1) \to 0$ as $\e \to 0$ and
$$h_r(\xi):=\mathbf{A}_w \xi:\xi+4\mu_wr \det(\xi) \quad \text{ for all }\xi \in \mathbb M^{2 \times 2}_{\rm sym}.$$
Note that since $0 \leq r \leq 1$ and $2 |\det(\xi)| \leq |\xi|^2$, we deduce that $h_r$ is a nonnegative quadratic form, and thus the function $\sqrt{2\alpha\kappa h_r}$ is convex.

We next claim that for every $\gamma > 0$ there exists $\e_0>0$ (depending on $\gamma$) such that for all $r \in [0,1]$ and all $\e\leq \e_0$,
$$-2\mu_w{\eta_\e} r \det(\xi) \leq \frac{\gamma}{2} \mathbf{A}_s \xi:\xi = \gamma f(\xi)\quad \text{ for all }\xi \in \mathbb M^{2 \times 2}_{\rm sym}.$$
Indeed, if $\det(\xi) \geq 0$ the result is obvious, while if $\det(\xi)<0$, then
using that $-2\det(\xi) \leq |\xi|^2$, we have
$$-2\mu_w{\eta_\e r} \det(\xi) \leq \mu_w {\eta_\e} |\xi|^2 \leq \frac{\gamma c_s}{2} |\xi|^2 \leq  \frac{\gamma}{2} \mathbf{A}_s \xi:\xi,
$$
provided we choose $\e_0 > 0$ such that for $\e\leq \e_0$ we have $\eta_\e\leq \gamma c_s/(2\mu_w)$. Thus, for $\e \leq \e_0$, we have
\begin{equation}\label{eq:gamma}
f(\xi) \geq (1-\gamma)f(\xi)-2\mu_w{\eta_\e} r \det(\xi) \quad \text{ for all }\xi \in \mathbb M^{2 \times 2}_{\rm sym}
\end{equation}
and gathering \eqref{eq:0809} together with \eqref{eq:gamma}, yields
\begin{equation}\label{eq:wedge}
(f\wedge g_\e)(\xi) \geq \left(\big(1-\gamma)f\big) \wedge \sqrt{2\alpha\kappa h_r}\right)(\xi)-2\mu_w{\eta_\e} r \det(\xi)+o(1)|\xi|\quad \text{ for all }\xi \in \mathbb M^{2 \times 2}_{\rm sym}.
\end{equation}

Let $\omega \subset \O$ be an open set. Then, for all $\varphi \in \mathcal C_c(\omega)$ with $0 \leq \varphi \leq 1$ and all $\e \leq \e_0$, we obtain using \eqref{eq:wedge} that
\begin{align*}
 &\frac12 \int_\omega \Bigl[\eta_\e \chi_\e \mathbf{A}_w + (1-\chi_\e)\mathbf{A}_s \Bigr]e(u_\e):e(u_\e)\, dx + \frac{\kappa}{\e} \int_\omega \chi_\e\, dx\\
  &\qquad \geq \int_\omega \varphi \, \Bigl[(1-\chi_\e) f(e(u_\e)) + \chi_\e g_\e (e(u_\e)) \Bigr]\, dx\\
  &\qquad \geq \int_\omega \varphi \, (f \wedge g_\e)(e(u_\e)) \, dx \\
   &\qquad \geq \int_\omega \varphi\left(\big(1-\gamma)f\big) \wedge \sqrt{2\alpha\kappa h_r}\right)(e(u_\e)) \, dx -2\mu_w\eta_\e r \int_\omega \varphi\, \det(e(u_\e)) \, dx + o(1) \int_\omega |e(u_\e)|\, dx.
\end{align*}
Since $v_\e \wto 0$ weakly in $H^1(\O;\R^2)$, then $\det\nabla v_\e \wto 0$ weakly* in $\M(\O)$, see~\cite[Theorem~8.20]{D}. On the other hand, since $\eta_\e\det(e(u_\e))=\det(e(v_\e)) \leq \det(\nabla v_\e)$ by Young's inequality, we infer that
 $$\limsup_{\e \to 0} \eta_\e\int_\omega \varphi \, \det(e(u_\e))\, dx \leq \lim_{\e \to 0} \int_\omega \varphi\, \det\nabla v_\e\, dx = 0.$$
Therefore, using that $o(1) \to 0$ and that $(e(u_\e))_{\e>0}$ is bounded in $L^1(\O;\mathbb M^{2 \times 2}_{\rm sym})$,
\begin{align*}
\mu(\omega) &\geq (1-\gamma) \liminf_{\e \to 0} \int_\omega \varphi \, (f \wedge \sqrt{2\alpha\kappa h_r})(e(u_\e))\, dx \\
&\geq (1-\gamma) \liminf_{\e \to 0}\int_\omega \varphi \, (f\,\Box\, \sqrt{2\alpha\kappa h_r})(e(u_\e))\, dx.
\end{align*}
Since $f\,\Box\, \sqrt{2\alpha\kappa h_r}$ is convex, $(x,\xi) \mapsto \varphi(x) (f\,\Box\, \sqrt{2\alpha\kappa h_r})(\xi)$ is continuous, and
$$0 \leq \varphi(x) (f\,\Box\, \sqrt{2\alpha\kappa h_r})(\xi) \leq C(1+|\xi|) \quad \text{ for all }(x,\xi) \in \omega \times \mathbb M^{2 \times 2}_{\rm sym},$$
for some constant $C>0$, a standard weak* lower semicontinuity result for convex functionals of measures shows that
\begin{align*}
&\liminf_{\e \to 0} \int_\omega \varphi \, (f \,\Box\, \sqrt{2\alpha\kappa h_r})(e(u_\e))\, dx\\
&\qquad\geq  \int_\omega \varphi \,   (f\,\Box\, \sqrt{2\alpha\kappa h_r})(e(u))\, dx +\int_\omega \varphi (f\,\Box\, \sqrt{2\alpha\kappa h_r})^\infty \left(\frac{dE^s u}{d|E^su|}\right)d|E^su|.
\end{align*}
Also letting $\gamma \to 0$, we thus infer that
$$\mu(\omega) \geq\int_\omega \varphi \, (f\,\Box\, \sqrt{2\alpha\kappa h_r})(e(u))\, dx +\int_\omega \varphi (f\,\Box\, \sqrt{2\alpha\kappa h_r})^\infty \left(\frac{dE^s u}{d|E^su|}\right)d|E^su|,$$
and passing to the supremum with respect to all $\varphi \in \mathcal C_c(\omega)$ with $0 \leq \varphi \leq 1$, yields
\begin{equation} \label{eq:mu_omega_2D}
\mu(\omega) \geq\int_\omega  (f\,\Box\, \sqrt{2\alpha\kappa h_r})(e(u))\, dx +\int_\omega (f\,\Box\, \sqrt{2\alpha\kappa h_r})^\infty \left(\frac{dE^s u}{d|E^su|}\right)d|E^su|.
\end{equation}

In order to pass to the supremum with respect to $r \in [0,1]$, let us observe that for all $\xi \in \mathbb M^{2 \times 2}_{\rm sym}$,
$$\max_{r \in [0,1]} h_r(\xi)=\max_{r \in \{0,1\}} h_r(\xi)=\mathbf{A}_w \xi:\xi+4\mu_w (\det(\xi))^+=h(\xi).$$
For fixed $\xi \in \mathbb M^{2 \times 2}_{\rm sym}$, we have that $\xi' \in \mathbb M^{2 \times 2}_{\rm sym} \mapsto f(\xi-\xi') + \sqrt{2\alpha\kappa h_r(\xi')}$ is convex, continuous and coercive, while $r \in [0,1] \mapsto  f(\xi-\xi') + \sqrt{2\alpha\kappa h_r(\xi')}$ is concave and continuous. According to~\cite[Chapter VI, Proposition 2.3]{ET}), we get that
\begin{align*}
\sup_{r \in [0,1]} (f\,\Box\, \sqrt{2\alpha\kappa h_r})(\xi)& = \sup_{r \in [0,1]} \inf_{\xi' \in \mathbb M^{2 \times 2}_{\rm sym}} \left\{ f(\xi-\xi') + \sqrt{2\alpha\kappa h_r(\xi')}\right\}\\ 
& =  \inf_{\xi' \in \mathbb M^{2 \times 2}_{\rm sym}} \sup_{r \in [0,1]} \left\{ f(\xi-\xi') + \sqrt{2\alpha\kappa h_r(\xi')}\right\}\\ 
& =  \inf_{\xi' \in \mathbb M^{2 \times 2}_{\rm sym}} \left\{ f(\xi-\xi') + \sqrt{2\alpha\kappa h(\xi')}\right\}\\ 
& = (f\,\Box\, \sqrt{2\alpha\kappa h})(\xi).
\end{align*}
In addition, since, for $r \in [0,1]$, the functions $f\,\Box\, \sqrt{2\alpha\kappa h_r}$ and $f\,\Box\, \sqrt{2\alpha\kappa h}$ are convex, and $(f\,\Box\, \sqrt{2\alpha\kappa h_r})(0)=(f\,\Box\, \sqrt{2\alpha\kappa h})(0)=0$, we get that
\begin{align*}
\sup_{r \in [0,1]} (f\,\Box\, \sqrt{2\alpha\kappa h_r})^\infty(\xi) &= \sup_{r \in [0,1]} \sup_{t>0} \frac{(f\,\Box\, \sqrt{2\alpha\kappa h_r})(t\xi)}{t}\\
& = \sup_{t>0} \sup_{r \in [0,1]} \frac{(f\,\Box\, \sqrt{2\alpha\kappa h_r})(t\xi)}{t}\\
& = \sup_{t>0}  \frac{(f\,\Box\, \sqrt{2\alpha\kappa h})(t\xi)}{t}\\
& =  (f\,\Box\, \sqrt{2\alpha\kappa h})^\infty(\xi).
\end{align*}
Thus, applying~\cite[Proposition 1.16]{Braides} to~\eqref{eq:mu_omega_2D}, we obtain
$$\mu(\omega) \geq\int_\omega  (f\,\Box\, \sqrt{2\alpha\kappa h})(e(u))\, dx+\int_\omega (f\,\Box\, \sqrt{2\alpha\kappa h})^\infty \left(\frac{dE^s u}{d|E^su|}\right)d|E^su|.$$
Hence, also using Lemma~\ref{l:h},
$$\liminf_{\e \to 0} E_\e(u_\e,\chi_\e)=\mu(\O)\geq  \int_\O \overline W(e(u))\, dx+\int_\O \overline W^\infty \left(\frac{dE^s u}{d|E^su|}\right)d|E^su|,$$
whereby $\E_0'(u,0) \geq \E_0(u,0)$.

\medskip

\noindent {\it Step 2b: The three-dimensional case.} By direct computation we obtain, for all $\xi \in  \mathbb M^{3 \times 3}_{\rm sym}$, 
\begin{equation} \label{eq:h3D}
  h(\xi)-\mathbf{A}_w \xi:\xi=4\mu_w \bigl( (\xi_1\xi_2)^+ + (\xi_1\xi_3)^+ + (\xi_2 \xi_3)^+ \bigr),
\end{equation}
where $\xi_1$, $\xi_2$, and $\xi_3$ are the eigenvalues of $\xi \in  \mathbb M^{3 \times 3}_{\rm sym}$. According to Lemma~\ref{lem:polyconvex}, $\xi_1\xi_2$, $\xi_1\xi_3$ and $\xi_2\xi_3$ are the eigenvalues of $\cof(\xi)$, and we observe that at least one of them is nonnegative. The highest eigenvalue of $\cof(\xi)$ can be computed as the maximum of the Rayleigh quotient
$$\lambda_{\max}(\cof(\xi)):=\max_{|y|=1} \cof(\xi)y \cdot y \geq 0.$$
The other two eigenvalues of $\cof(\xi)$ have the same sign. We can thus write that
\begin{align*}
(\xi_1\xi_2)^+ + (\xi_1\xi_3)^+ + (\xi_2 \xi_3)^+ & = \max \, \bigl\{ \lambda_{\max}(\cof(\xi)),\tr(\cof(\xi)) \bigr\}\\
& = \max_{|y|=1} \max \, \bigl\{\cof(\xi)y\cdot y,\tr(\cof(\xi)) \bigr\}.
\end{align*}
Let us define the following set of matrices:
$$M:= \bigl\{A \in \mathbb M^{3 \times 3}_{\rm sym} : \; A=\Id \text{ or } A=y \otimes y \text{ for }y \in \R^3 \text{ with }|y|=1 \bigr\}.$$ 
Since $\cof(\xi)y \cdot y=\cof(\xi):(y \otimes y)$ and $\tr(\cof(\xi))=\cof(\xi):\Id$, the previous argument shows that for all $\xi \in  \mathbb M^{3 \times 3}_{\rm sym} $,
\begin{equation}\label{eq:lambdamax}
(\xi_1\xi_2)^+ + (\xi_1\xi_3)^+ + (\xi_2 \xi_3)^+=\max_{A \in M} \bigl\{A : \cof(\xi)\bigr\} = \max_{A \in {\rm conv}(M)} \bigl\{A :\cof(\xi)\bigr\},
\end{equation}
where in the last equality we denote by ${\rm conv}(M)$ the convex hull of $M$, which is a closed set. This last equality then follows since the mapping $A \mapsto A:\cof(\xi)$ is linear.

For all $A \in {\rm conv}(M)$, we define the quadratic form
$$h_A(\xi):=\mathbf{A}_w \xi:\xi +4\mu_w A:\cof(\xi),  \qquad \xi \in  \mathbb M^{3 \times 3}_{\rm sym}.$$
We claim that for all $A \in {\rm conv}(M)$, the quadratic form $h_A$ is convex. Indeed, on the one hand, if $A=\Id$, the function $h_{\Id}:\xi \mapsto \mathbf{A}_w\xi:\xi +4\mu_w\tr(\cof(\xi))=(\lambda_w+2\mu_w)(\tr(\xi))^2$ is clearly a convex quadratic form. On the other hand, let us consider a matrix $A=y \otimes y$ for some $y \in \R^3$ with $|y|=1$. Let us write $\xi=P D P^T$ where $P \in SO(3)$ and $D={\rm diag}(\xi_1,\xi_2,\xi_3)$, so that, according to Lemma~\ref{lem:polyconvex}, we have $\cof(\xi)=P \cof(D)P^T$, where $\cof(D)={\rm diag}(\xi_2\xi_3,\xi_1\xi_3,\xi_1\xi_2)$. We have that the quadratic form $h_{y \otimes y}:\xi \mapsto \mathbf{A}_w\xi:\xi +4\mu_w\cof(\xi)y\cdot y$ can be written in the basis of the eigenvectors of $\xi$ as
\begin{align*}
h_{y\otimes y}(\xi) &= \lambda_w(\xi_1+\xi_2+\xi_3)^2 +2\mu_w (\xi_1^2+\xi_2^2+\xi_3^2)\\
&\qquad + 4\mu_w(P^Ty)^2_1\xi_2\xi_3+4\mu_w(P^Ty)^2_2\xi_1\xi_3+4\mu_w(P^Ty)^2_3\xi_1\xi_2.
\end{align*}
If $\xi_1\xi_2\geq 0$, $\xi_2\xi_3\geq 0$, and $\xi_1 \xi_3\geq 0$, then the previous expression is clearly nonnegative. Otherwise, there exists exactly one nonnegative eigenvalues of $\cof(D)$ and both the other eigenvalues are nonpositive. Up to a permutation of indices, there is no loss of generality in assuming that $\xi_1\xi_2 \geq 0$, $\xi_2\xi_3 \leq 0$, and $\xi_1 \xi_3 \leq 0$. 
For simplicity, we define $z:=P^Ty$.  Using Young's inequality and that $|z|=1$, we get that
\begin{align*}
h_{y\otimes y}(\xi) & = \lambda_w(\xi_1+\xi_2+\xi_3)^2 +2\mu_w (\xi_1^2+\xi_2^2+\xi_3^2)- 4\mu_w z^2_1 |\xi_2\xi_3|-4\mu_wz^2_2|\xi_1\xi_3|+4\mu_wz^2_3|\xi_1\xi_2|\\
&\geq  \lambda_w(\xi_1+\xi_2+\xi_3)^2 +2\mu_w (\xi_1^2+\xi_2^2+\xi_3^2)\\
&\qquad - 2\mu_w z^2_1 (\xi_2^2+\xi^2_3)-2\mu_wz^2_2(\xi_1^2+\xi_3^2)+4\mu_wz^2_3|\xi_1\xi_2|\\
& = \lambda_w(\xi_1+\xi_2+\xi_3)^2 +4\mu_wz^2_3|\xi_1\xi_2| +2\mu_w (1-z_2^2)\xi_1^2+2\mu_w(1-z_1^2)\xi_2^2+2\mu_w z_3^2\xi_3^2 \\
&\geq  0.
\end{align*}
Since the mapping $A \mapsto h_A(\xi)$ is linear, we deduce that also if $A \in {\rm conv}(M)$, then the quadratic forms $h_{A}$ are nonnegative. Thus, the functions $\sqrt{2\alpha\kappa h_A}$ are convex for all $A \in {\rm conv}(M)$.

We can then proceed in a similar fashion to the two-dimensional case. Note that for all $\gamma > 0$ there exists $\e_0>0$ such that, for all $A \in {\rm conv}(M)$ and all $\e\leq \e_0$, we have
$$-2\mu_w\eta_\e A:\cof(\xi)\leq \frac{\gamma}{2} \mathbf{A}_s \xi:\xi = \gamma f(\xi) \quad \text{ for all }\xi \in \mathbb M^{3 \times 3}_{\rm sym}.$$
As a consequence, for all open sets $\omega \subset \O$, all $\varphi \in \mathcal C_c(\omega)$ with $0 \leq \varphi \leq 1$, and all $A \in {\rm conv}(M)$, we get (via Young's inequality)
\begin{align*}
g_\e(\xi) &= \frac{\eta_\e}{2} \bigl(\mathbf{A}_w \xi:\xi +4\mu_w A:\cof(\xi)\bigr)+\frac{\kappa}{\e}-2\eta_\e\mu_w A:\cof(\xi)\\
&\geq \sqrt{2\alpha\kappa h_A(\xi)} -2\eta_\e\mu_w A:\cof(\xi)+ o(1) |\xi|.
\end{align*}
where $o(1) \to 0$ as $\e \to 0$. Thus,
\begin{align*}
 &\frac12 \int_\omega \bigl[ \eta_\e \chi_\e \mathbf{A}_w + (1-\chi_\e)\mathbf{A}_s \bigr]e(u_\e):e(u_\e)\, dx + \frac{\kappa}{\e} \int_\omega \chi_\e\, dx\\
  &\qquad\geq  \int_\omega \varphi \, \bigl[(1-\chi_\e) f(e(u_\e)) + \chi_\e g_\e (e(u_\e))\bigr]\, dx\\
&\qquad\geq (1-\gamma) \int_\omega \varphi \, (f \wedge \sqrt{2\alpha\kappa h_A})(e(u_\e)) \, dx -2\mu_w\eta_\e  \int_\omega \varphi\, A:\cof(e(u_\e))\, dx +o(1) \int_\omega|e(u_\e)|\, dx.
\end{align*}
Let $F \in\mathbb M^{3 \times 3}$. According to linear algebra manipulations (see, e.g.,~\cite[Eq.~(3.2)]{BKS}), we have 
$$\cof(F^{\rm sym})=(\cof(F))^{\rm sym} - \cof(F^{\rm skew}),$$
where $\cof(F^{\rm skew})$ is a nonnegative matrix (see, e.g.,~\cite[Eq.~(3.4)]{BKS}). Thus, for all $y\in \R^3$, we get
\begin{align*}
\cof(F^{\rm sym}) y\cdot y &\leq (\cof(F))^{\rm sym}y \cdot y=\cof(F) y\cdot y, \\
\tr(\cof(F^{\rm sym})) &\leq  \tr((\cof(F))^{\rm sym})=\tr(\cof(F)),
\end{align*}
which implies that
\begin{equation}\label{cof1}
A:\cof(F^{\rm sym}) \leq A:\cof(F) \quad \text{ for all } A \in {\rm conv}(M).
\end{equation}
Since $v_\e := \sqrt{\eta_\e} u_\e \wto 0$ weakly in $H^1(\O;\R^3)$, then $\cof(\nabla v_\e) \wto 0$ weakly* in $\M(\O;\mathbb M^{3 \times 3}_{\rm sym})$, see~\cite[Theorem~8.20]{D}. Therefore,~\eqref{cof1} implies that 
\begin{align*}
\limsup_{\e \to 0}\eta_\e \int_{\omega}\varphi\,A: \cof(e(u_{\e}))\, dx
&=\limsup_{\e \to 0}\int_{\omega}\varphi\, A: \cof(e(v_{\e}))\, dx  \notag\\
&\leq \lim_{\e \to 0}\int_{\omega}\varphi\, A: \cof(\nabla v_{\e})\, dx\\
&= 0.  \label{cof}
\end{align*}
Hence,
\begin{align*}
\mu(\omega) &\geq (1-\gamma)  \liminf_{\e \to 0} \int_\omega \varphi \, (f \wedge \sqrt{2\alpha\kappa h_A})(e(u_\e))\, dx \\
&\geq (1-\gamma)  \liminf_{\e \to 0}\int_\omega \varphi \, (f\,\Box\, \sqrt{2\alpha\kappa h_A})(e(u_\e))\, dx.
\end{align*}
Since $f\,\Box\, \sqrt{2\alpha\kappa h_A}$ is convex, $(x,\xi) \mapsto \varphi(x) (f\,\Box\, \sqrt{2\alpha\kappa h_A})(\xi)$ is continuous, and
$$0 \leq \varphi(x) (f\,\Box\, \sqrt{2\alpha\kappa h_A})(\xi) \leq C(1+|\xi|) \quad \text{ for all }(x,\xi) \in \omega \times \mathbb M^{3 \times 3}_{\rm sym},$$
for some constant $C>0$, a standard weak* lower semicontinuity result for convex functionals of measures shows that
\begin{align*}
 &\liminf_{\e \to 0} \int_\omega \varphi \, (f \,\Box\, \sqrt{2\alpha\kappa h_A})(e(u_\e))\, dx\\
 &\qquad \geq  \int_\omega \varphi \, (f\,\Box\, \sqrt{2\alpha\kappa h_A})(e(u))\, dx +\int_\omega \varphi (f\,\Box\, \sqrt{2\alpha\kappa h_A})^\infty \left(\frac{dE^s u}{d|E^su|}\right)d|E^su|.
 \end{align*}
Also letting $\gamma \to 0$, we thus infer that
$$\mu(\omega) \geq\int_\omega \varphi \, (f\,\Box\, \sqrt{2\alpha\kappa h_A})(e(u))\, dx +\int_\omega \varphi (f\,\Box\, \sqrt{2\alpha\kappa h_A})^\infty \left(\frac{dE^s u}{d|E^su|}\right)d|E^su|,$$
and passing to the supremum with respect to all $\varphi \in \mathcal C_c(\omega)$ with $0 \leq \varphi \leq 1$, yields
$$\mu(\omega) \geq\int_\omega  (f\,\Box\, \sqrt{2\alpha\kappa h_A})(e(u))\, dx +\int_\omega (f\,\Box\, \sqrt{2\alpha\kappa h_A})^\infty \left(\frac{dE^s u}{d|E^su|}\right)d|E^su|.$$
It thus remains to pass to the supremum with respect to $A \in {\rm conv}(M)$. Let us observe that, according to{~\eqref{eq:h3D}}~\eqref{eq:lambdamax}, for all $\xi \in \mathbb M^{3 \times 3}_{\rm sym}$,
\begin{equation}\label{eq:h_A}
\max_{A \in {\rm conv}(M)} h_A(\xi)=\mathbf{A}_w \xi:\xi+4\mu_w \max\{\lambda_{\max}(\cof(\xi)),\tr(\cof(\xi))\}=h(\xi).
\end{equation}
We claim that
$$(f\,\Box\, \sqrt{2\alpha\kappa h})(\xi)=\max_{A \in {\rm conv}(M)}  (f\,\Box\, \sqrt{2\alpha\kappa h_A})(\xi).$$
Indeed, the set ${\rm conv}(M)$ is compact and convex, and, for fixed $\xi \in \mathbb M^{3 \times 3}_{\rm sym}$, we have that $\xi' \in \mathbb M^{3 \times 3}_{\rm sym} \mapsto f(\xi-\xi') + \sqrt{2\alpha\kappa h_A(\xi')}$ is convex, continuous and coercive, while $A \in {\rm conv}(M) \mapsto  f(\xi-\xi') + \sqrt{2\alpha\kappa h_A(\xi')}$ is concave and continuous. Then,~\cite[Chapter VI, Proposition 2.3]{ET}) ensures that
\begin{align*}
\sup_{A \in {\rm conv}(M)}  (f\,\Box\, \sqrt{2\alpha\kappa h_A})(\xi) & = \sup_{A \in {\rm conv}(M)} \inf_{\xi' \in \mathbb M^{3 \times 3}_{\rm sym}} \left\{ f(\xi-\xi') + \sqrt{2\alpha\kappa h_A(\xi')}\right\}\\
& = \inf_{\xi' \in \mathbb M^{3 \times 3}_{\rm sym}} \sup_{A \in {\rm conv}(M)}\left\{ f(\xi-\xi') + \sqrt{2\alpha\kappa h_A(\xi')}\right\}\\
& =  \inf_{\xi' \in \mathbb M^{3 \times 3}_{\rm sym}} \left\{f(\xi-\xi') +\sqrt{2\alpha\kappa h(\xi')} \right\}\\ 
& = (f\,\Box\, \sqrt{2\alpha\kappa h})(\xi).
\end{align*}
where we used~\eqref{eq:h_A} in the second-to-last equality. In addition, since, for $A \in {\rm conv}(M)$, the functions $f\,\Box\, \sqrt{2\alpha\kappa h_A}$ and $f\,\Box\, \sqrt{2\alpha\kappa h}$ are convex, and $(f\,\Box\, \sqrt{2\alpha\kappa h_A})(0)=(f\,\Box\, \sqrt{2\alpha\kappa h})(0)=0$, we get that 
\begin{align*}
\sup_{A \in {\rm conv}(M)} (f\,\Box\, \sqrt{2\alpha\kappa h_A})^\infty(\xi) &=  \sup_{A \in {\rm conv}(M)} \sup_{t>0} \frac{(f\,\Box\, \sqrt{2\alpha\kappa h_A})(t\xi)}{t}\\
& = \sup_{t>0} \sup_{A \in {\rm conv}(M)} \frac{(f\,\Box\, \sqrt{2\alpha\kappa h_A})(t\xi)}{t}\\
& = \sup_{t>0}  \frac{(f\,\Box\, \sqrt{2\alpha\kappa h})(t\xi)}{t}\\
& =  (f\,\Box\, \sqrt{2\alpha\kappa h})^\infty(\xi).
\end{align*}
Finally, using~\cite[Proposition 1.16]{Braides} as before and also invoking Lemma~\ref{l:h}, we get that
$$\liminf_{\e \to 0} E_\e(u_\e,\chi_\e) =\mu(\O) \geq \int_\O \overline W(e(u))\, dx+\int_\O \overline W^\infty\left(\frac{dE^s u}{d|E^su|} \right)d|E^su|,$$
and so $\E_0'(u,0) \geq \E_0(u,0)$.
\end{proof}

The next result (which is not used anywhere else) establishes a relaxation-type formula for the effective energy density $\overline W$ in the spirit of~\cite{BDV1,BDV}.
{\begin{prop} \label{prop:Walpha_char}
For all $\xi \in \Ms$, we have
\begin{align*}
\overline W=\sup\Big\{\varphi : \Ms \to \R \text{ convex, }&\varphi(\xi) \leq f(\xi) \text{ for all }\xi \in \Ms,\\
&\varphi(a \odot b) \leq \sqrt{2\alpha\kappa \mathbf{A}_w(a\odot b):(a \odot b)} \text{ for all }a,b \in \R^n\Big\}.
\end{align*}
\end{prop}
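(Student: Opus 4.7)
Write $V(\xi)$ for the supremum on the right-hand side. The plan is to prove the two inclusions $\overline W \leq V$ and $V \leq \overline W$ separately. For $\overline W \leq V$, it suffices to verify that $\overline W$ itself is a competitor. Its convexity is part of Proposition~\ref{prop:W_alpha}; the bound $\overline W \leq f$ is immediate from the infimal convolution formula by taking $\xi'=0$; and the rank-one bound $\overline W(a\odot b) \leq \sqrt{2\alpha\kappa\, \mathbf{A}_w(a\odot b):(a\odot b)}$ follows by combining $\overline W(\xi)\leq \overline W^\infty(\xi)$ (valid because $\overline W$ is convex with $\overline W(0)=0$, so $t\mapsto \overline W(t\xi)/t$ is nondecreasing) with the recession-function identity $\overline W^\infty(a\odot b)=\sqrt{2\alpha\kappa\, \mathbf{A}_w(a\odot b):(a\odot b)}$ from the same proposition.

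For the reverse inclusion $V\leq \overline W$, the starting point is a scaling lemma extracted from the two constraints. Given a competitor $\varphi$, a rank-one symmetric $p$, an arbitrary $\xi\in\Ms$, and $t\in(0,1)$, convexity gives
\[
\varphi(\xi)\leq (1-t)\,\varphi\Bigl(\tfrac{\xi-p}{1-t}\Bigr)+t\,\varphi\Bigl(\tfrac{p}{t}\Bigr).
\]
Since $p/t$ remains rank-one symmetric, the admissibility of $\varphi$ combined with the positive $1$-homogeneity of $\sqrt{h}$ gives $t\,\varphi(p/t)\leq \sqrt{2\alpha\kappa h(p)}$. Letting $t\to 0^+$ and using the continuity of the real-valued convex function $\varphi$ on the finite-dimensional space $\Ms$ yields the fundamental estimate
\[
\varphi(\xi)\leq \varphi(\xi-p)+\sqrt{2\alpha\kappa h(p)} \qquad\text{for every rank-one symmetric }p.
\]
Iterating this estimate along any decomposition $\xi'=m_1+\cdots+m_k$ into rank-one symmetric matrices, and applying $\varphi\leq f$ at the last step, produces
\[
\varphi(\xi)\leq f(\xi-\xi') + \sum_{i=1}^{k}\sqrt{2\alpha\kappa h(m_i)}.
\]

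The argument is then completed by the following decomposition lemma, which constitutes the main technical step: for every $\xi'\in\Ms$ there exist rank-one symmetric matrices $m_1,\dots,m_k$ with $\xi'=\sum_i m_i$ and $\sum_i \sqrt{h(m_i)}=\sqrt{h(\xi')}$. Granting this, the previous display reads $\varphi(\xi)\leq f(\xi-\xi')+\sqrt{2\alpha\kappa h(\xi')}$ for every $\xi'\in\Ms$; taking the infimum over $\xi'$ and invoking $\overline W=f\,\Box\,\sqrt{2\alpha\kappa h}$ yields $\varphi(\xi)\leq\overline W(\xi)$, as desired.

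The main obstacle is this decomposition lemma. I would prove it by a case analysis performed in an orthonormal eigenbasis of $\xi'$, based on the sign pattern of its ordered eigenvalues. If all nonzero eigenvalues of $\xi'$ share a common sign, the diagonal decomposition $\xi'=\sum_i \xi'_i\, e_i\otimes e_i$ works directly: each rank-one term is rank-one symmetric, and the two sums in the definition of $h$ coincide, yielding the desired equality. If $n=2$ with opposite-sign eigenvalues, $\xi'$ itself is rank-one symmetric by Lemma~\ref{lem:aodotb}, so $k=1$ suffices. The delicate case is $n=3$ with mixed signs, say $\xi'_1<0<\xi'_2\leq \xi'_3$ (the symmetric situation being analogous): in the eigenbasis, set
\[
m_1:=\mathrm{diag}(-r,\xi'_2,0), \quad m_2:=\mathrm{diag}(-s,0,\xi'_3), \quad r:=\tfrac{\xi'_2|\xi'_1|}{\xi'_2+\xi'_3},\ s:=\tfrac{\xi'_3|\xi'_1|}{\xi'_2+\xi'_3}.
\]
Then $m_1+m_2=\xi'$ and each $m_i$ has rank $2$ with nonzero eigenvalues of opposite sign, hence is rank-one symmetric by Lemma~\ref{lem:aodotb}. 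Expressing $h$ on any such diagonal matrix as the squared weighted $\ell^2$-norm of the two-component vector whose entries are the sum of the absolute values and the sum of the eigenvalues (with weights $\mu_w$ and $\lambda_w+\mu_w$), the prescribed proportional choice of $r,s$ makes the vectors associated to $m_1$ and $m_2$ positive scalar multiples of the vector associated to $\xi'$; thus the triangle inequality for the weighted Euclidean norm becomes an equality, and $\sqrt{h(m_1)}+\sqrt{h(m_2)}=\sqrt{h(\xi')}$ follows. This reduction of the decomposition identity to the equality case of a weighted triangle inequality is the crux of the proof.
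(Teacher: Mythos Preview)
Your proof is correct and follows a genuinely different route from the paper. The paper argues by convex duality: it introduces the extended-real function $H$ equal to $\sqrt{2\alpha\kappa\,\mathbf{A}_w\xi:\xi}$ on rank-one symmetric matrices and $+\infty$ elsewhere, reduces the claim to $H^{**}=\sqrt{2\alpha\kappa h}$, and proves the nontrivial inequality $H^{**}\leq\sqrt{2\alpha\kappa h}$ by showing $H^*(\tau)=+\infty$ whenever $\tau\notin K$, via a three-case analysis driven by the explicit formula~\eqref{d:G} for $G$. Your argument is primal and constructive: you bypass conjugation entirely by exhibiting, for each $\xi'\in\Ms$, an explicit rank-one symmetric decomposition $\xi'=\sum_i m_i$ with $\sum_i\sqrt{h(m_i)}=\sqrt{h(\xi')}$, and then feed this into the convexity inequality you derived for an arbitrary competitor $\varphi$. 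The trade-off is that the paper's dual computation is routine once the case split is set up (one just checks a certain supremum is $+\infty$), whereas your decomposition lemma requires the clever proportional choice of $r,s$ in the mixed-sign three-dimensional case to force equality in the weighted triangle inequality. In return, your approach yields more: it produces the optimal rank-one decomposition explicitly (with at most $n$ pieces), which is geometrically informative and independent of the specific structure of $K$ and $G$.
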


\begin{proof}
According to Proposition~\ref{prop:Walpha} and Lemma~\ref{l:h}, we can write 
$$\overline W=(f^*+I_{K})^*=f \, \Box\, \sqrt{2\alpha\kappa h}=(f \wedge \sqrt{2\alpha\kappa h})^{**}.$$ 
Therefore, if we prove that the convex envelope of the function $H:\Ms \to [0,+\infty]$ defined by
$$H(\xi):=
\begin{cases}
\sqrt{2\alpha\kappa \mathbf{A}_w \xi:\xi} & \text{ if }\xi=a\odot b \text{ for some }a,b \in \R^n,\\
+\infty & \text{ otherwise,}
\end{cases}$$
is given by $\sqrt{2\alpha\kappa h}$, we then may conclude $\overline W=(f\wedge (H^{**}))^{**}=(f\wedge H)^{**}$, that is, the conclusion of the proposition. First of all, since by Proposition~\ref{prop:W_alpha} we have $H(a\odot b)=\sqrt{2\alpha\kappa h(a \odot b)}$ for all $a$, $b \in \R^n$, we get that $\sqrt{2\alpha\kappa h}\leq H$, and since $\sqrt{2\alpha\kappa h}$ is convex, we get $\sqrt{2\alpha\kappa h}\leq H^{**}$.

We now establish the reverse inequality $\sqrt{2\alpha\kappa h}\geq H^{**}$, which is equivalent to $I_K \leq H^*$, i.e., $H^*(\tau)=+\infty$ for all $\tau \notin K$. So, let us fix $\tau \notin K$, i.e.\ $G(\tau)>2\alpha\kappa$ where $G$ is given by~\eqref{d:G}. Since all expressions of matrices only depend on the eigenvalues, it is not restrictive to assume that $\tau$ is diagonal with ordered eigenvalues $\tau_1\leq\dots\leq\tau_n$.

We distinguish three cases.

\medskip	
\noindent {\it Case I:} If
$$\frac{\lambda_w+2\mu_w}{2(\lambda_w+\mu_w)}(\tau_1+\tau_n) < \tau_1,$$ then according to~\eqref{d:G}, we have that $2\alpha\kappa<G(\tau)=\frac{\tau_1^2}{\lambda_w+2\mu_w}$. 

The computation of the convex conjugate of $H$ gives
\begin{align*}
H^*(\tau) &=\sup_{t>0} \sup_{|a|=|b|=1} t \left\{ \tau:( a \odot b) - \sqrt{2\alpha \kappa \mathbf{A}_w (a\odot b):(a\odot b)}\right\}\\
&= \sup_{t>0} \sup_{|a|=|b|=1} t \left\{(\tau a) \cdot b - \sqrt{2\alpha\kappa \big((\lambda_w+\mu_w)(a\cdot b)^2 +\mu_w \big)}\right\}.
\end{align*}
In order to show that $H^*(\tau)=+\infty$, it is enough to prove that 
$$M:=\max_{|a|=|b|=1} \left\{(\tau a) \cdot b - \sqrt{2\alpha\kappa \big((\lambda_w+\mu_w)(a\cdot b)^2 +\mu_w \big)}\right\}>0.$$

Taking $a=e_1$ and $b=\pm e_1$, we deduce that
$$M \geq |\tau_1| - \sqrt{2\alpha \kappa(\lambda_w+2\mu_w)}>0.$$

\medskip
	
\noindent {\it Case II:} If 
\begin{equation}\label{thirdregime}
\tau_1 \leq \frac{\lambda_w+2\mu_w}{2(\lambda_w+\mu_w)}(\tau_1+\tau_n) \leq \tau_n,	
\end{equation} 
then according to~\eqref{d:G}, we have that 
\[ \label{Gthirdregime}
2\alpha\kappa<G(\tau)=\frac{(\tau_1-\tau_n)^2}{4\mu_w}+\frac{(\tau_1+\tau_n)^2}{4(\lambda_w+\mu_w)}.	
\]
We will rewrite $H^*(\tau)$ in a more convenient form. Denoting by $R$ the set of the diagonal $n \times n$ matrices of the form $\xi = a \odot b$ ($a,b \in \R^n$) with ordered eigenvalues $\xi_1\leq0=\xi_2=\dots=\xi_{n-1}=0 \leq\xi_n$ (see Lemma~\ref{lem:aodotb}), we have
\begin{equation}\label{R1}
H^*(\tau) \geq\sup_{\xi\in R} \left\{ \tau:\xi - \sqrt{2\alpha \kappa \mathbf{A}_w \xi:\xi}\right\}.
\end{equation} 
Let us set
$$\tau_s:=\tau_n+\tau_1,\qquad \tau_d:=\tau_n-\tau_1,$$
so that $\tau_1=(\tau_s-\tau_d)/2$, $\tau_n=(\tau_s+\tau_d)/2$, and~\eqref{thirdregime},\eqref{R1} become
\begin{equation}\label{G2}
2\alpha\kappa<G(\tau)=\frac{\tau_d^2}{4\mu_w}+\frac{\tau_s^2}{4(\lambda_w+\mu_w)},\qquad \frac{\mu_w}{\lambda_w+\mu_w}|\tau_s| \leq \tau_d,
\end{equation} 
\begin{equation}\label{R2}
H^*(\tau) \geq\sup_{|\xi_s|\leq\xi_d} \left\{ \frac{\tau_s \xi_s}{2}+\frac{\tau_d\xi_d}{2} - \sqrt{2\alpha \kappa} \Big((\lambda_w+\mu_w)\xi_s^2+\mu_w\xi_d^2\Big)^{1/2}\right\}.
\end{equation} 
Changing the variables to
$$\tilde \xi_s:=\sqrt{\lambda_w+\mu_w}\xi_s,\quad \tilde{\xi}_d:=\sqrt{\mu_w}\xi_d,\quad \tilde{\tau}_s:=\frac{\tau_s}{2\sqrt{\lambda_w+\mu_w}},\quad \tilde{\tau}_d:=\frac{\tau_d}{2\sqrt{\mu_w}},$$
equations~\eqref{G2} and~\eqref{R2} become
\begin{equation}\label{G3}
2\alpha\kappa<{\tilde\tau_d^2}+{\tilde\tau_s^2},\qquad \sqrt{\frac{\mu_w}{\lambda_w+\mu_w}}|\tilde\tau_s| \leq \tilde\tau_d,
\end{equation} 
\begin{equation}\label{R3}
H^*(\tau) \geq\sup_{\sqrt{\frac{\mu_w}{\lambda_w+\mu_w}}|\tilde\xi_s|\leq\tilde\xi_d} \left\{{\tilde\tau_s \tilde\xi_s}+{\tilde\tau_d\tilde\xi_d} - \sqrt{2\alpha \kappa} \Big(\tilde\xi_s^2+\tilde\xi_d^2\Big)^{1/2}\right\}.
\end{equation} 
Finally, introducing the vectors $x,y\in\R^2$ given as
$$x:=(\tilde \tau_s,\tilde\tau_d),\qquad y:=(\tilde\xi_s,\tilde{\xi_d}),$$
equations~\eqref{G3},~\eqref{R3} reduce to
$$2\alpha\kappa<|x|^2,\qquad \sqrt{\frac{\mu_w}{\lambda_w+\mu_w}}|x_1| \leq x_2,	$$
$$H^*(\tau) \geq\sup_{\sqrt{\frac{\mu_w}{\lambda_w+\mu_w}}|y_1|\leq y_2} \left\{x\cdot y - \sqrt{2\alpha \kappa} |y|\right\}=+\infty,$$
choosing $y=tx$, $t>0$.

\medskip

\noindent {\it Case III:} If
$$\tau_n < \frac{\lambda_w+2\mu_w}{2(\lambda_w+\mu_w)}(\tau_1+\tau_n),$$
then according to~\eqref{d:G}, we have that $2\alpha\kappa<G(\tau)=\frac{\tau_n^2}{\lambda_w+2\mu_w}$. Repeating the computations of Case I and taking $a=e_n$ and $b=\pm e_n$, we deduce that
$$M \geq |\tau_n| - \sqrt{2\alpha \kappa(\lambda_w+2\mu_w)}>0.$$
This concludes the proof.
\end{proof}

\section{The trivial regime}

We now treat the first of the endpoint cases.

\begin{thm}\label{thm:alpha=0}
Let $\O \subset \R^n$ be a bounded open set and let $\mathbf{A}_w$, $\mathbf{A}_s$ be fourth-order symmetric elasticity tensors satisfying~\eqref{eq:growth}. If $\alpha=0$ in~\eqref{eq:alpha}, then the functionals $\E_\e$ $\Gamma$-converge as $\e \to 0$ with respect to the strong $L^1(\O;\R^n) \times L^1(\O)$-topology to the functional
$\Phi_0:L^1(\O;\R^n) \times L^1(\O) \to [0,+\infty]$ defined by
$$\Phi_0(u,\chi)=
\begin{cases}
0 & \text{ if $\chi=0$ a.e.\ in $\O$,}\\
+\infty & \text{ otherwise.}
\end{cases}$$
\end{thm}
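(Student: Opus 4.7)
My plan is to handle the two $\Gamma$-convergence inequalities separately. For the $\Gamma$-liminf bound $\E_0' \geq \Phi_0$, I would invoke Lemma~\ref{lem:comp1}, which already shows that any limit point of bounded-energy sequences satisfies $\chi = 0$ a.e.; on the set $\{\chi = 0\}$ we have $\Phi_0 \equiv 0$, so the inequality is trivial. The content is the $\Gamma$-limsup bound $\E_0''(u,0) = 0$: since $\E_0''$ is always lower semicontinuous on $L^1(\Omega;\R^n) \times L^1(\Omega)$ (see, e.g.,~\cite[Proposition 6.8]{DM}) and $C^\infty_c(\R^n;\R^n)$ is dense in $L^1(\Omega;\R^n)$, it will suffice to construct a recovery sequence when $u \in C^\infty_c(\R^n;\R^n)$.

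For such $u$, set $M := \|\nabla u\|_\infty$. The idea is to approximate $u$ by a piecewise constant function on a cubic grid and absorb the jumps across cube faces by damaging thin transition strips. Choose any $\ell_\e \to 0$ and set $\delta_\e := \ell_\e \sqrt{\eta_\e\e}$, so that both $\delta_\e \to 0$ and $\delta_\e/\ell_\e = \sqrt{\eta_\e \e} \to 0$. Tile $\Omega$ by cubes $\{Q_i^\e\}$ of side $\ell_\e$, set $c_i^\e := \fint_{Q_i^\e} u$, and define $u_\e := c_i^\e$ on the inner region $\{x \in Q_i^\e : \mathrm{dist}(x, \partial Q_i^\e) > \delta_\e/2\}$, smoothly interpolating between adjacent cube values on the complementary strip $D_\e$. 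Because neighboring averages satisfy $|c_i^\e - c_j^\e| \leq CM\ell_\e$, the interpolation can be realized in $W^{1,\infty}(\Omega;\R^n)$ with $|\nabla u_\e| \leq CM\ell_\e/\delta_\e$ on $D_\e$. Set $\chi_\e$ to be the characteristic function of $D_\e$.

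The convergences $u_\e \to u$ in $L^1(\Omega;\R^n)$ and $\chi_\e \to 0$ in $L^1(\Omega)$ then follow from $\ell_\e \to 0$ and $|D_\e| \leq C|\Omega|\delta_\e/\ell_\e \to 0$. Since $e(u_\e) \equiv 0$ outside $D_\e$, the upper bound in~\eqref{eq:growth} on $\mathbf{A}_w$ yields
\[
\E_\e(u_\e, \chi_\e) \leq C\Big(\frac{\eta_\e M^2 \ell_\e^2}{\delta_\e^2} + \frac{\kappa}{\e}\Big)|D_\e| \leq C|\Omega|\Big(\frac{M^2 \eta_\e \ell_\e}{\delta_\e} + \frac{\kappa\delta_\e}{\e\ell_\e}\Big) = C(M^2+\kappa)|\Omega|\sqrt{\frac{\eta_\e}{\e}},
\]
which tends to $0$ by the hypothesis $\alpha = 0$.

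The only mildly technical step will be constructing the interpolation explicitly near the lower-dimensional faces of the grid skeleton (edges and vertices) where several transition strips overlap; this is standard and can be carried out via a partition of unity subordinate to the skeleton without affecting the stated gradient bound on $D_\e$. As a byproduct, the proof also confirms the natural energy rescaling $\sqrt{\e/\eta_\e}\,E_\e$ suggested in the introduction.
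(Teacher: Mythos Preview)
Your argument is correct and follows essentially the same strategy as the paper: approximate $u$ by a function that is piecewise constant on a cubic grid and damage the thin transition strips where the interpolation takes place. The only differences are cosmetic---the paper fixes the grid (with $u$ piecewise constant) and performs a two-step density argument, whereas you let the mesh size $\ell_\e$ shrink with $\e$ and reduce directly to smooth $u$; your optimal choice $\delta_\e=\ell_\e\sqrt{\eta_\e\e}$ has the added benefit of exhibiting the $\sqrt{\eta_\e/\e}$ energy scaling mentioned in the introduction.
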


\begin{proof}
Clearly, the lower bound $\E_0'(u,\chi) \geq \Phi_0(u,\chi)$ holds for all $(u,\chi) \in L^1(\O;\R^n) \times L^1(\O)$. On the other hand, it is enough to prove the upper bound  $\E''_0(u,\chi)=0$ whenever $\chi=0$ a.e.\ in $\O$, since $\Phi_0$ is infinite otherwise. We assume for simplicity by translating and rescaling that $\O \subset Q:=(0,1)^n$. We extend $u$ by zero in $Q \setminus \O$ so that the extension (still denoted by $u$) belongs to $L^1(Q;\R^n)$. 

\medskip

\noindent {\it Step 1.} We first assume that  $u$ is of the form
\begin{equation}\label{eq:pwcst}
u=\sum_{i\in \{0,\ldots,N-1\}^n}u_i \chi_{Q_i},
\end{equation}
where $u_i \in \R^n$ for all $i \in \{0,\ldots,N-1\}^n$ and $\{Q_i\}_{i \in \{0,\ldots,N-1\}^n}$ is a subdivision of $Q$ (up to an $\LL^n$-negligible set) into $N^n$ open cubes 
$$Q_i:=\frac1N(i+Q)$$
of side length $1/N$ with $N \in \N$, and $i \in \{0,\ldots,N-1\}^n$. Therefore, up to a set of zero Lebesgue measure, we have
$$Q=\bigcup_{i\in \{0,\ldots,N-1\}^n} Q_i.$$

Since $\eta_\e \ll \e$, one can find a sequence $(\delta_\e)_{\e>0}$ such that $\eta_\e \ll \delta_\e \ll \e$ (meaning $\lim_{\e\to 0} \eta_\e/\delta_\e = \lim_{\e\to 0} \delta_\e/\e = 0$). 
We denote by $Q^{1-\delta_\e}=(1-\delta_\e)Q$ the cube concentric with $Q$, having side length $1-\delta_\e$.
Let $\varphi_\e \in \mathcal C_c^\infty(\R^n;[0,1])$ be a cut-off function such that $\varphi_\e \equiv 1$ on $Q^{1-\delta_\e}$, $\varphi_\e \equiv 0$ on $\R^n \setminus Q$, $0 < \varphi_\e < 1$ on $Q \setminus \overline{Q^{1-\delta_\e}}$, and $|\nabla \varphi_\e|\leq C/\delta_\e$. We then define the displacement $u_\e \in L^1(Q;\R^n)$ by
$$u_\e(x):=\sum_{i \in \{0,\ldots,N-1\}^n} u_i \varphi_\e\left(Nx-i\right)\quad \text{ for all }x \in Q,$$
and the damaged set by
$$ D_\e:=\bigcup_{i \in \{0,\ldots,N-1\}^n} \bigl\{x \in Q_i:\; 0<\varphi_\e(Nx-i)<1\bigr\}.$$

Note that  $u_\e \in H^1(Q;\R^n)$, and since $\varphi_\e \to \chi_{Q}$ in $L^1(\R^n)$ we have $u_\e \to u$ in $L^1(Q;\R^n)$. In addition,
$$e(u_\e)(x)=N \sum_{i \in \{0,\ldots,N-1\}^n} u_i \odot \nabla \varphi_\e\left(Nx-i\right)\quad \text{ for all }x \in Q,$$
and since $u_\e$ is constant in each connected component of $Q\setminus D_\e$, we infer that
$$e(u_\e)(x)=0 \quad \text{ for all }x \in Q \setminus D_\e.$$
We also remark that
$$\LL^n(D_\e)=\sum_{i \in \{0,\ldots,N-1\}^n} \left(\frac{1}{N}\right)^n\Big(1-(1-\delta_\e)^n\Big) =  n\delta_\e + o(\delta_\e)$$
so that $\chi_{D_\e}\to 0$ in $L^1(Q)$.

We then compute the energy associated to $u_\e$ and $\chi_{D_\e}$:
\begin{align*}
\E_\e(u_\e,\chi_{D_\e}) &= \frac{\eta_\e}{2} \int_{D_\e} \mathbf{A}_w e(u_\e):e(u_\e)\, dx + \frac{\kappa}{\e} \LL^n(D_\e)\\
&\leq  C\eta_\e \sum_{i \in \{0,\ldots,N-1\}^n}   \frac{|u_i|^2}{\delta_\e^2} \LL^n(D_\e)+\frac{\kappa}{\e} \LL^n(D_\e) \\
&\leq C \left(\frac{\eta_\e}{\delta_\e} + \frac{\delta_\e}{\e}\right)\to 0
\end{align*}
where we used the fact that $\eta_\e/\delta_\e \to 0$ and $\delta_\e/\e \to 0$. As a consequence,
$$\E''_0(u,0) \leq \limsup_{\e \to 0} \E_\e(u_\e,\chi_{D_\e}) =0.$$

\medskip

\noindent {\it Step 2.} Next, if $u \in L^1(Q;\R^n)$ is arbitrary, then there exists a sequence $(u_N)_{N \in \N}$ as in~\eqref{eq:pwcst} such that $u_N \to u$ in $L^1(Q;\R^n)$. By the lower semicontinuity of the $\Gamma$-upper limit and the result of Step 1, we infer that
$$\E''_0(u,0) \leq \liminf_{N \to +\infty} \E''_0(u_N,0)=0,$$
completing the proof.
\end{proof}

\section{The elasticity regime}

\begin{thm}\label{thm:alpha=infty}
Let $\O \subset \R^n$ be a bounded open set and let $\mathbf{A}_w$, $\mathbf{A}_s$ be fourth-order symmetric elasticity tensors satisfying~\eqref{eq:growth}. If $\alpha=\infty$ in~\eqref{eq:alpha}, then the functionals $\E_\e$ $\Gamma$-converge as $\e \to 0$ with respect to the strong $L^1(\O;\R^n) \times L^1(\O)$-topology to the functional
$\Phi_\infty:L^1(\O;\R^n) \times L^1(\O) \to [0,+\infty]$ defined by
$$\Phi_\infty(u,\chi)=
\begin{cases}
\ds  \frac12 \int_\O \mathbf{A}_s e(u):e(u)\, dx & \text{ if }\chi=0\text{ a.e.\ and }u \in H^1(\O;\R^n),\\
+\infty & \text{ otherwise.}
\end{cases}$$
\end{thm}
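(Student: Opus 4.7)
The plan is to prove the two $\Gamma$-bounds separately; both are considerably simpler than in the Hencky regime because the strong toughness penalty combined with the relatively mild degeneracy $\eta_\e\gg\e$ of the weak phase forces damage to disappear entirely in the limit and leaves no room for concentrations.

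For the upper bound, when $\chi=0$ a.e.\ and $u\in H^1(\Omega;\R^n)$, the constant-in-$\e$ recovery sequence $(u_\e,\chi_\e):=(u,0)$ achieves equality: $\E_\e(u,0)=\frac12\int_\Omega \mathbf{A}_s e(u):e(u)\,dx=\Phi_\infty(u,0)$ for every $\e>0$, hence $\E_0''(u,0)\leq\Phi_\infty(u,0)$. Outside this set $\Phi_\infty=+\infty$ and there is nothing to prove.

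For the lower bound, pick $(u_\e,\chi_\e)\to(u,\chi)$ in $L^1(\Omega;\R^n)\times L^1(\Omega)$ with $\liminf_{\e\to 0}\E_\e(u_\e,\chi_\e)=:M<+\infty$ and extract a realizing subsequence. The toughness term $\frac{\kappa}{\e}\int\chi_\e\leq M$ yields $|\{\chi_\e=1\}|\leq M\e/\kappa\to 0$, so $\chi=0$ a.e. Setting $\zeta_\e:=(1-\chi_\e)e(u_\e)$, ellipticity of $\mathbf{A}_s$ gives $\|\zeta_\e\|_{L^2}\leq C$, while ellipticity of $\mathbf{A}_w$ gives $\|\sqrt{\eta_\e}\,\chi_\e e(u_\e)\|_{L^2}\leq C$. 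The crucial observation, exploiting $\eta_\e/\e\to+\infty$, is that Cauchy--Schwarz combined with the volume estimate produces
\[
\|\chi_\e e(u_\e)\|_{L^1(\Omega)}\leq|\{\chi_\e=1\}|^{1/2}\,\|\chi_\e e(u_\e)\|_{L^2(\Omega)}\leq C\sqrt{\e/\eta_\e}\to 0.
\]
Hence $\chi_\e e(u_\e)\to 0$ strongly in $L^1$ and, along a further subsequence, $\zeta_\e\rightharpoonup\zeta$ weakly in $L^2(\Omega;\Ms)$. Since $u_\e\to u$ in $L^1$, passing to distributional limits in the decomposition $e(u_\e)=\zeta_\e+\chi_\e e(u_\e)$ identifies $e(u)=\zeta\in L^2(\Omega;\Ms)$ and $E^s u=0$, so $u\in LD(\Omega)$ with $e(u)\in L^2$; a standard Korn inequality on the (tacitly Lipschitz) domain $\Omega$, applied up to a rigid motion which does not affect the energy, upgrades this to $u\in H^1(\Omega;\R^n)$. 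Finally, using $\chi_\e\in\{0,1\}$ one has $(1-\chi_\e)\mathbf{A}_s e(u_\e):e(u_\e)=\mathbf{A}_s\zeta_\e:\zeta_\e$, and weak-$L^2$ lower semicontinuity of the convex functional $\eta\mapsto\frac12\int\mathbf{A}_s\eta:\eta\,dx$ delivers
\[
\liminf_{\e\to 0}\E_\e(u_\e,\chi_\e)\geq\liminf_{\e\to 0}\frac12\int_\Omega\mathbf{A}_s\zeta_\e:\zeta_\e\,dx\geq\frac12\int_\Omega\mathbf{A}_s e(u):e(u)\,dx=\Phi_\infty(u,0),
\]
after dropping the nonnegative damage and toughness contributions.

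The only nontrivial step is the $L^1$-vanishing of $\chi_\e e(u_\e)$, which is precisely where the regime $\eta_\e\gg\e$ enters decisively: the $\mathbf{A}_w$-elastic bound controls $\chi_\e e(u_\e)$ in $L^2$ only with the weight $\sqrt{\eta_\e}$, which alone would permit unbounded $L^1$ concentrations on the damaged set; but the much smaller volume $O(\e)$ of that set, via Cauchy--Schwarz, more than compensates. All remaining steps are standard weak-$L^2$ lower semicontinuity and Korn-type arguments.
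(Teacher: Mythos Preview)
Your proof is correct and takes a genuinely different, and considerably more elementary, route than the paper's.

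The paper proceeds via a one-dimensional analysis followed by slicing: on each line it replaces $u_k$ inside the damaged intervals by either a piecewise-constant function (creating a jump) or an affine interpolant, obtaining an $SBV$ competitor $v_k$; Young's inequality then yields the lower bound $\frac{c_s}{2}\int|v_k'|^2 + \sqrt{2\kappa c_w\eta_{\e_k}/\e_k}\int_{J_{v_k}}|[v_k]|\,d\HH^0$, and the hypothesis $\eta_\e/\e\to\infty$ forces the jump part to vanish. This is then transported to $n$ dimensions by Fubini/slicing and the $BD$ structure theorem, and a further step identifies the weak $L^2$-limit of $(1-\chi_k)e(u_k)$ as $e(u)$.

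Your argument bypasses all of this: the single Cauchy--Schwarz estimate $\|\chi_\e e(u_\e)\|_{L^1}\leq|\{\chi_\e=1\}|^{1/2}\|\chi_\e e(u_\e)\|_{L^2}\leq C\sqrt{\e/\eta_\e}\to 0$ immediately shows that the damaged part of the strain vanishes in $L^1$, so the full symmetrized gradient converges in the sense of distributions to the weak $L^2$-limit of $(1-\chi_\e)e(u_\e)$. This identifies $e(u)\in L^2$ and yields the lower bound in one stroke. The paper's approach is more hands-on and perhaps illustrates the mechanism (large slopes on small sets become jumps whose cost blows up), but your argument is both shorter and dimension-independent from the start.

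One minor remark: the passage from $e(u)\in L^2(\Omega;\Ms)$ to $u\in H^1(\Omega;\R^n)$ does require a Korn inequality, hence some regularity of $\Omega$; you flag this with ``tacitly Lipschitz'', while the paper's statement is for a general bounded open set and its proof makes the same implicit leap. Since $\Phi_\infty$ depends only on $e(u)$, this is a cosmetic issue with the theorem's phrasing rather than a defect in either proof.
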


\begin{proof}
The upper bound $\E''_0(u,\chi) \leq \Phi_\infty(u,\chi)$ is obvious if the right-hand side is infinite. If $\Phi_\infty(u,\chi)<\infty$, then $u \in H^1(\O;\R^n)$ and $\chi=0$, and choosing $u_\e := u$ and $\chi_\e := 0$ for all $\e > 0$, we get that
$$\E''_0(u,0) \leq \liminf_{\e \to 0} E_\e(u,0)= \frac12 \int_\O \mathbf{A}_s e(u):e(u)\, dx=\Phi_\infty(u,0),$$
as required. 

The remainder of the proof consists in establishing the lower bound. Clearly, $\E'_0(u,\chi) \geq \Phi_\infty(u,\chi)$ if the left-hand side is infinite, so that we can assume without loss of generality that  $\E'_0(u,\chi) <\infty$, and, by Lemma~\ref{lem:comp1}, that $\chi=0$ and $u \in BD(\O)$. We start by improving the compactness result in this particular regime by showing that, actually, $u \in H^1(\O;\R^n)$. To this aim, as in Lemma~\ref{lem:comp1}, let us consider a subsequence $\e_k \to 0^+$ and a sequence $(u_k,\chi_k)_{k \in \N} \subset H^1(\O;\R^n) \times L^\infty(\O;\{0,1\})$ such that $(u_k,\chi_k) \to (u,0)$ in $L^1(\O;\R^n) \times L^1(\O)$ and 
$$\lim_{k \to \infty} E_{\e_k}(u_k,\chi_k)=\E'_{0}(u,0)<+\infty.$$
According to the coercivity properties of the tensors $\mathbf{A}_w$ and $\mathbf{A}_s$, we have the following energy bound:
\begin{equation}\label{eq:lower-bound}
M_k := \frac12 \int_\O \big[\eta_{\e_k}  c_w \chi_k +  c_s(1-\chi_k)\big] \cdot |e(u_k)|^2\, dx + \frac{\kappa}{\e_k}\int_\O \chi_k\, dx
\leq M < \infty.
\end{equation}

\medskip

\noindent {\it Step 1: The one-dimensional case. }
By outer regularity of the Lebesgue measure, we can assume without loss of generality that the damaged set $D_k=\{\chi_k=1\}$ is open, and that it is actually a finite union of pairwise disjoint open intervals, i.e.,
$$D_k=\bigcup_{i=1}^{m_k} (a_i^k,b_i^k),$$
where $m_k \in \N$ and $a_i^k<b_i^k <a_{i+1}^k <b_{i+1}^k$ for all $1 \leq i \leq m_k-1$.  
We observe that minimizing the expression~\eqref{eq:lower-bound} with respect to all $\chi\in L^\infty(\O;\{0,1\})$, one finds that the minimizer is given by the characteristic function of the set 
$$\left\{ x \in \O : \; |u'_k(x)|>\sqrt{\frac{2\kappa}{( c_s - \eta_{\e_k} c_w )\e_k}}\right\},$$
which corresponds to the completely damaged part of the medium. It is therefore natural to expect the singularities to nucleate inside this set, and  the medium to remain elastic in the complementary set.

We then modify the function $u_k$ inside each interval $(a_i^k,b_i^k)$, where we distinguish two cases. Let us define the sets of indices 
$$I_k:=\left\{i \in \{1,\ldots,m_k\} : \; \frac{|u_k(b_i^k)-u_k(a_i^k)|}{b_i^k-a_i^k}>\sqrt{\frac{2\kappa}{( c_s - \eta_{\e_k} c_w )\e_k}}\right\}$$
and 
$$J_k:=\{1,\ldots,m_k\}\setminus I_k.$$
In the intervals $(a_i^k,b_i^k)$ where $i \in I_k$, it will be convenient to create a jump, while if $i \in J_k$, the values of $u_k(a_i^k)$ and $u_k(b_i^k)$ will be connected in an affine way. We therefore define
$$v_k(x):=\begin{cases}
u_k(x) & \text{ if } x \notin D_k,\\
u_k(a_i^k) & \text{ if } x \in \left(a_i^k,\frac{a_i^k+b_i^k}{2}\right) \text{ with }i \in I_k,\\
u_k(b_i^k) & \text{ if } x \in \left(\frac{a_i^k+b_i^k}{2},b_i^k\right) \text{ with }i \in I_k,\\
u_k(a_i^k) + (x-a_i^k)\frac{u_k(b_i^k)-u_k(a_i^k)}{b_i^k-a_i^k} & \text{ if }x \in (a_i^k,b_i^k) \text{ with }i \in J_k.
\end{cases}$$
Clearly, $v_k \in SBV(\O)$ with jump set $J_{v_k} = \bigcup_{i \in I_k}\left\{\frac{a_i^k+b_i^k}{2}\right\}$. We denote by $v_k'$ the approximately continuous part of the derivative $Dv_k$, for which we have $v_k' \in L^2(\O)$.

Let us compute each term of the energy. First,
\begin{align}
\frac{\kappa}{\e_k}\LL^1(D_k)&=\frac{\kappa}{\e_k}\sum_{i \in I_k}(b_i^k-a_i^k) + \frac{\kappa}{\e_k}\sum_{i \in J_k}(b_i^k-a_i^k) \notag\\
&\geq \frac{\kappa}{\e_k}\sum_{i \in I_k}(b_i^k-a_i^k) + \frac{ c_s- \eta_{\e_k} c_w}{2}\sum_{i \in J_k}(b_i^k-a_i^k)\frac{|u_k(b_i^k)-u_k(a_i^k)|^2}{(b_i^k-a_i^k)^2} \notag\\
&=\frac{\kappa}{\e_k}\sum_{i \in I_k}(b_i^k-a_i^k) + \frac{ c_s- \eta_{\e_k} c_w}{2}\sum_{i \in J_k}\int_{a_i^k}^{b_i^k}|v'_k|^2\, dx.\label{eq:diss}
\end{align}
Moreover, since $v_k=u_k$ in $\O \setminus D_k$, we get that
\begin{equation}\label{eq:beta}
\frac{ c_s}{2}\int_\O (1-\chi_k)|u'_k|^2\, dx = \frac{ c_s}{2}\int_{\O\setminus D_k}|v'_k|^2\, dx.
\end{equation}
Finally, owing to Jensen's inequality,
\begin{align}
\frac{ \eta_{\e_k} c_w}{2}\int_\O \chi_k|u'_k|^2\, dx
&= \frac{ \eta_{\e_k} c_w}{2}\sum_{i=1}^{m_k}\int_{a_i^k}^{b_i^k}|u'_k|^2\, dx \notag\\
&\geq  \frac{ \eta_{\e_k} c_w}{2}\sum_{i=1}^{m_k} (b_i^k-a_i^k)\frac{|u_k(b_i^k)-u_k(a_i^k)|^2}{(b_i^k-a_i^k)^2} \notag\\
&=  \frac{ \eta_{\e_k} c_w}{2}\sum_{i \in I_k} (b_i^k-a_i^k)\frac{|u_k(b_i^k)-u_k(a_i^k)|^2}{(b_i^k-a_i^k)^2} +\frac{ \eta_{\e_k} c_w}{2}\sum_{i \in J_k} \int_{a_i^k}^{b_i^k}|v'_k|^2\, dx. \label{eq:alpha1}
\end{align}
Gathering~\eqref{eq:lower-bound},~\eqref{eq:diss},~\eqref{eq:beta} and~\eqref{eq:alpha1} and using that $v'_k=0$ a.e.\ in $\bigcup_{i \in I_k} (a_i^k,b_i^k)$ yields
\begin{align*}
M_k &\geq \frac{ \eta_{\e_k} c_w}{2}\sum_{i \in I_k} (b_i^k-a_i^k)\frac{|u_k(b_i^k)-u_k(a_i^k)|^2}{(b_i^k-a_i^k)^2} +\frac{ \eta_{\e_k} c_w}{2}\sum_{i \in J_k} \int_{a_i^k}^{b_i^k}|v'_k|^2\, dx\\
&\qquad + \frac{ c_s}{2}\int_{\O\setminus D_k}|v'_k|^2\, dx + \frac{\kappa}{\e_k}\sum_{i \in I_k}(b_i^k-a_i^k) + \frac{ c_s- \eta_{\e_k} c_w}{2}\sum_{i \in J_k}\int_{a_i^k}^{b_i^k}|v'_k|^2\, dx \\
&= \frac{ c_s}{2} \int_\O|v'_k|^2\, dx+\sum_{i \in I_k}(b_i^k-a_i^k)\left[\frac{ \eta_{\e_k} c_w}{2} \cdot \frac{|u_k(b_i^k)-u_k(a_i^k)|^2}{(b_i^k-a_i^k)^2}+\frac{\kappa}{\e_k} \right].
\end{align*}
Thanks to Young's inequality we deduce that
\begin{align*}
M \geq M_k &\geq \frac{ c_s}{2} \int_\O|v'_k|^2\, dx+\sqrt{\frac{2\kappa  c_w\eta_{\e_k}}{\e_k}}\sum_{i \in I_k} |u_k(b_i^k)-u_k(a_i^k)|\\
&= \frac{ c_s}{2} \int_\O|v'_k|^2\, dx+\sqrt{\frac{2\kappa  c_w\eta_{\e_k}}{\e_k}}\int_{J_{v_k}}|v_k^+-v_k^-|\, d\HH^0.
\end{align*}

The previous formula implies that the sequence $(v_k)_{k\in\N}$ is uniformly bounded in $BV(\O)$, and thus a subsequence converges weakly* in $BV(\O)$ to some $v \in BV(\O)$. In addition, since $\{ u_k \neq v_k\} \subset D_k$ and $\LL^1(D_k) \to 0$ by~\eqref{eq:lower-bound}, we infer that $u \in BV(\O)$ and that the whole sequence $(v_k)$ converges weakly* to $u$. Since $(v_k')_{k \in \N}$ is bounded in $L^2(\O)$ and $|D^s v_k|(\O) \to 0$ (since $\frac{\eta_{\e_k}}{\e_k} \to \infty$), we actually deduce that $u \in H^1(\O)$. Passing to the lower limit in the previous formula thus yields
\begin{equation}\label{eq:E_k}
\liminf_{k \to \infty} M_k\geq \frac{ c_s}{2} \int_\O|u'|^2\, dx.
\end{equation}
Moreover, since $v_k=u_k$ a.e.\ in $\O \setminus D_k$, $v'_k \wto u'$ weakly in $L^2(\O)$ and $\chi_k \to 0$ strongly in $L^2(\O)$, we also get that
\begin{equation}\label{eq:lsc}
\liminf_{k \to \infty}  \int_\O (1-\chi_k)|u'_k|^2\, dx\geq \int_\O|u'|^2\, dx.
\end{equation}

\medskip

\noindent {\it Step 2: The $n$-dimensional case.} The general case will be deduced from the one-dimensional case via standard slicing techniques. 

We start by introducing some notation. For $\nu \in \Sn^{n-1}$, we denote by $\Pi_\nu$ the hyperplane orthogonal to $\nu$ and passing through the origin. Given a set $E \subset \R^n$, a scalar function $g:E \to \R$, and a vector map $f:E \to \R^n$, for all $y \in \Pi_\xi$, we denote by
$$E^\nu_y:=\bigl\{t \in \R : y+t\nu \in E\bigr\}, \quad g_y^\nu(t):=g(y+t\nu), \quad f_y^\nu(t):=f(y+t\nu)\cdot\nu \;\;\text{ for }t \in E_y^\nu$$
the sections of $E$, $g$ and $f$, respectively, that pass through $y \in \Pi_\nu$ in the direction $\nu$. 

Using Fubini's theorem, for all $\nu \in \Sn^{n-1}$, there exists a subsequence (possibly depending on $\nu$),  denoted by $(u_j,\chi_j)=(u_{k_j},\chi_{k_j})$, such that 
$$\liminf_{k \to +\infty} M_k=\lim_{j \to +\infty}M_{k_j}$$
and
\begin{equation}\label{eq:conv-slicing}
\bigl( (u_j)_y^\nu,(\chi_j)_y^\nu \bigr) \to \bigl( u_y^\nu,0 \bigr) \text{ in }L^1(\O_y^\nu;\R^d) \times L^1(\O_y^\nu)\quad\text{ for $\HH^{n-1}$-a.e.\ $y \in \Pi_\nu$.}
\end{equation}
Using the structure theorem in $BD$ (see~\cite[Theorem 4.5]{ACDM}) and the fact that for $\HH^{n-1}$-a.e.\ $y \in \Pi_\nu$ we have 
$$|((u_j)_y^\nu)'(t)| = |e(u_j)(y+t\nu):(\nu\otimes \nu)|\leq  |e(u_j)(y+t\nu)|\quad \LL^1\text{-a.e.\ in }\O_y^\nu,$$
Fatou's lemma leads to
\begin{multline}\label{eq:Fatou}
M \geq  \int_{\Pi_\nu} \liminf_{j \to +\infty} \bigg\{\int_{\O_y^\nu} \Big[ \frac12 \left( c_w \eta_{\e_{k_j}} (\chi_j)_y^\nu(t) +  c_s (1-(\chi_j)_y^\nu(t))\right)|((u_j)_y^\nu)'(t)|^2\\
+\frac{\kappa}{\e_{k_j}} (\chi_j)_y^\nu(t)\Big] \, dt \bigg\}\, d\HH^{n-1}(y).
\end{multline}
Thanks to the result in the one-dimensional case, in particular~\eqref{eq:E_k}, and ~\eqref{eq:conv-slicing}, we get that $u_y^\nu \in H^1(\O_y^\nu)$ for $\HH^{n-1}$-a.e.\ $y \in \Pi_\nu$ (in particular $D^s u_y^\nu = 0$), and
\begin{multline}\label{eq:1Dcase}
\liminf_{j \to +\infty} \int_{\O_y^\nu} \Big[ \frac12 \left( c_w \eta_{\e_{k_j}} (\chi_j)_y^\nu(t) +  c_s (1-(\chi_j)_y^\nu(t))\right)|((u_j)_y^\nu)'(t)|^2+\frac{\kappa}{\e_{k_j}} (\chi_j)_y^\nu(t)\Big] \, dt\\
\geq\frac{ c_s}{2} \int_{\O_y^\nu}|(u_y^\nu)'(t)|^2\, dt.
\end{multline}
Integrating~\eqref{eq:1Dcase} with respect to $y \in \Pi_\nu$ and using~\eqref{eq:Fatou}  gives
$$\frac{ c_s}{2}\int_{\Pi_\nu} \int_{\O_y^\nu}|(u_y^\nu)'(t)|^2\, dt \, d\HH^{n-1}(y)\leq M.$$

According to the structure theorem in $BD$ (see~\cite[Theorem 4.5]{ACDM}) we have
$$\begin{cases}
(u_y^\nu)'(t)=e(u)(y+t\nu):(\nu\otimes\nu)\text{  for $\HH^{n-1}$-a.e.\ $y \in \Pi_\nu$ and for $\LL^1$-a.e.\ $t \in \O_y^\nu$},\\
|E^s u:( \nu\otimes\nu)|(\O)=\int_{\Pi_\nu} |D^s u_y^\nu|(\O_y^\nu)\, d\HH^{n-1}(y).
\end{cases}$$
Therefore, Fubini's theorem yields for all $\nu \in \Sn^{n-1}$,
$$\int_\O |e(u):(\nu\otimes\nu)|^2\, dx <+\infty, \quad |E^s u:( \nu\otimes\nu)|(\O)=0.$$
Choosing first $\nu=e_i$ and then $\nu=(e_i+e_j)/2$ for all $1 \leq i,j\leq n$, where $\{e_1,\ldots,e_n\}$ stands for the canonical basis of $\R^n$, implies that $e(u) \in L^2(\O;\Ms)$ and $|E^s u|(\O)=0$ which means that $u \in H^1(\O;\R^n)$.

\medskip

\noindent {\it Step 3:  Weak convergence of the strain.} According to~\eqref{eq:lsc} and Fatou's lemma, the previous argument also shows that
$$\liminf_{k \to +\infty}\int_\O (1-\chi_k)|e(u_k):(\nu\otimes \nu)|^2\, dx \geq \int_\O |e(u):(\nu\otimes\nu)|^2\, dx.$$
We can further use the same method to establish that for all $w \in L^2(\O)$,
\begin{equation}\label{eq:w}
\liminf_{k \to +\infty}\int_\O (1-\chi_k)|e(u_k):(\nu \otimes \nu)-w|^2\, dx \geq \int_\O |e(u):(\nu\otimes\nu)-w|^2\, dx.
\end{equation}
Indeed, the previous inequality clearly holds if $w$ is piecewise constant on a Lipschitz partition of $\O$, and the general case follows from a density argument.

Since the sequence $((1-\chi_k)e(u_k))_{k \in \N}$ is bounded in $L^2(\O;\Ms)$, we can extract a subsequence (not relabeled) and find some $A \in L^2(\O;\Ms)$ such that $(1-\chi_k)e(u_k) \wto A$ weakly in $L^2(\O;\Ms)$. Applying~\eqref{eq:w} with $w:=A:(\nu\otimes\nu)-tz$, where $t \in \R$ and $z \in L^2(\O)$, we infer that
\begin{align*}
&\int_\O |(e(u)-A):(\nu\otimes\nu)|^2\, dx + 2t\int_\O z \cdot (e(u)-A):(\nu\otimes\nu)\,dx\\
&\qquad\leq \liminf_{k \to \infty}\int_\O (1-\chi_k)|(e(u_k)-A):(\nu \otimes \nu)|^2\, dx,
 \end{align*}
where we used that $(1-\chi_k)e(u_k) \wto A$ weakly in $L^2(\O;\Ms)$ and $\chi_k \to 0$ strongly in $L^2(\O)$. Passing to the limit as $t \to \pm\infty$ yields
$$\int_\O z(e(u)-A):(\nu\otimes\nu)\,dx=0$$
for all $\nu \in \Sn^{n-1}$ and all $z \in L^2(\O)$, which implies that $A=e(u)$ a.e.\ in $\O$. By uniqueness of the weak limit, we infer that also for the full sequence $(1-\chi_k)e(u_k) \wto e(u)$ weakly in $L^2(\O;\Ms)$. Finally, since
$$\E'_0(u,\chi)=\lim_{k \to \infty}\Phi_{\e_k}(u_k,\chi_k) \geq \liminf_{k \to \infty} \frac12 \int_\O (1-\chi_k) \mathbf{A}_s e(u_k):e(u_k)\, dx,$$
we deduce that 
$$\E'_0(u,\chi)\geq  \frac12 \int_\O \mathbf{A}_s e(u):e(u)\, dx=\Phi_\infty(u,0),$$
which completes the proof of the lower bound.
\end{proof}

\section{The Tresca model}

In this section we consider a different scaling of the energy. The weak elastic tensor $\eta_\e\mathbf{A}_w$ will be replaced by a new tensor $\mathbf{A}_w^\e$, in which the small parameter $\eta_\e$ will not act on the divergence term. For reasons of notational simplicity, we only consider the case $\eta_\e = \e$ here.
We assume that $\mathbf{A}^\e_w$ and $\mathbf{A}_s$ are isotropic tensors, i.e., for all $\xi \in \Ms$,
\begin{align*}
\mathbf{A}_w^\e \xi & := \lambda_w (\tr\xi)\, \Id+ 2\e\mu_w \xi,\\
\mathbf{A}_s \xi & := \lambda_s (\tr\xi)\, \Id+ 2\mu_s \xi,
\end{align*}
where $\lambda_i>0$ and $\mu_i>0$ are the Lam\'e coefficients, which satisfy $\lambda_w \leq \lambda_s$. For every $u \in H^1(\O;\R^n)$, $\chi \in L^\infty(\O;\{0,1\})$, and any $\e>0$, we define the following brittle damage energy functional:
\[\label{Etilde}
\widetilde E_\e(u,\chi):=\frac12 \int_\O \bigl[\chi \mathbf{A}_w^\e + (1-\chi)\mathbf{A}_s \bigr] e(u):e(u)\, dx + \frac{\kappa}{\e} \int_\O \chi\, dx.
\]
We will show that the limit model remains of plasticity type but with a Tresca elasticity set 
$$\widetilde K:=\bigl\{\tau \in \Msd : \tau_n-\tau_1 \leq 2 \sqrt{2\kappa \mu_w}\bigr\},$$
where $\tau_1 \leq \cdots \leq \tau_n$ are the ordered eigenvalues of $\tau$. Contrary to the model obtained in Theorem~\ref{thm:alpha=1}, here the stress constraint relates only to the deviatoric part of the stress.

It is convenient to introduce the Temam--Strang space~\cite{Temam}
\[
  U(\O) := \bigl\{ u \in BD(\O) :\;  \dive u \in L^2(\O) \bigr\},
\]
that is, the space of $BD$ functions whose distributional divergence is absolutely continuous with respect to Lebesgue measure and possesses a square-integrable density. This implies in particular that $E^su=E^s_Du$, the deviatoric part of $Eu$. The space $U(\Omega)$ is a Banach space under the norm
$$\|u\|_{U(\Omega)} :=\|u\|_{BD(\Omega)} +\|\dive u\|_{L^2(\Omega)}.$$
}
The main result of the section is the following.

\begin{thm}\label{thm:tilde}
Let $\O \subset \R^n$ ($n=2$ or $n=3$) be a bounded open set with Lipschitz boundary. For every $\e>0$ define the functional $\widetilde \E_\e:L^1(\O;\R^n) \times L^1(\O) \to [0,+\infty]$ by
\[
\widetilde \E_\e(u,\chi):=
\begin{cases}
\widetilde E_\e(u,\chi) & \text{ if } (u,\chi) \in H^1(\O;\R^n) \times L^\infty(\O;\{0,1\}),\\
+\infty & \text{ otherwise.}
\end{cases}
\]
Then, the functionals $\widetilde \E_\e$ $\Gamma$-converge as $\e \to 0$ with respect to the strong $L^1(\O;\R^n) \times L^1(\O)$-topology to the functional
$\widetilde \E_0:L^1(\O;\R^n) \times L^1(\O) \to [0,+\infty]$ defined by
\[
\widetilde \E_0(u,\chi):=
\begin{cases}
\begin{aligned}
\ds    &\int_\O\left(\frac{\lambda_s}{2}+\frac{\mu_s}{n}\right)(\dive u)^2 \, dx\\
&\qquad \ds + \int_\O \widetilde W(e_D(u))\, dx + \int_\O  \sqrt{2\kappa \tilde h\left(\frac{dE^s_D u}{d|E^s_D u|} \right)} \,d|E^s_D u|
\end{aligned}
&\text{ if }
\begin{cases}
\chi=0\text{ a.e.},\\
u \in U(\O),
\end{cases}\\
+\infty & \text{ otherwise,}
\end{cases}
\]
where
\begin{equation}\label{eq:tildefh}
\tilde f(\xi):=\mu_s|\xi|^2, \qquad \tilde h(\xi):=\mu_w \left(\sum_{i=1}^n|\xi_i| \right)^2 \quad \text{ for all } \xi \in \Msd,
\end{equation}
with $\xi_1 \leq \cdots \leq \xi_n$ the ordered eigenvalues of $\xi$, and
$\widetilde W$ is defined on $\Msd$ via
$$\widetilde W:=\tilde f \, \Box \, \sqrt{2\kappa \tilde h}.$$
\end{thm}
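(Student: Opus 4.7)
My plan is to follow very closely the structure of the proof of Theorem~\ref{thm:alpha=1}, exploiting throughout the orthogonal decomposition $\xi = \xi_D + \frac{\tr\xi}{n}\Id$ to split the problem into a hydrostatic and a deviatoric contribution. The crucial observation is that under the new scaling the coefficient in front of $(\tr\xi)^2$ is not rescaled, so
$$
\tfrac12 \mathbf{A}_w^\e\xi:\xi = \tfrac{\lambda_w + 2\e\mu_w/n}{2}(\tr\xi)^2 + \e\mu_w|\xi_D|^2, \qquad \tfrac12\mathbf{A}_s\xi:\xi = \bigl(\tfrac{\lambda_s}{2}+\tfrac{\mu_s}{n}\bigr)(\tr\xi)^2 + \mu_s|\xi_D|^2,
$$
and the hydrostatic part of the weak density stays bounded below by $\lambda_w/2$ times $(\tr\xi)^2$. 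This immediately upgrades the compactness from Lemma~\ref{lem:comp1}: from a uniform energy bound one obtains a uniform $L^2$-bound on $\dive u_\e$ together with the usual $L^1$-bound on $e_D(u_\e)$ (via the deviatoric Young inequality as in~\eqref{eq:boundbelow}), so after extraction $u_\e \wto u$ in $BD(\O)$ with $\dive u_\e \wto \dive u$ in $L^2(\O)$, whence $u \in U(\O)$ and $E^s u = E_D^s u$ is purely deviatoric.

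For the upper bound I would first prove a pointwise convergence analogous to Proposition~\ref{prop:Walpha}, namely that the symmetric quasiconvex envelope of $\widetilde W_\e := f \wedge \tilde g_\e$ converges pointwise to $(\frac{\lambda_s}{2}+\frac{\mu_s}{n})(\tr\xi)^2 + \widetilde W(\xi_D)$; the hydrostatic part passes transparently by inspection, and the deviatoric part reduces to the previous computation restricted to $\Msd$. Then I would combine this with the lower semicontinuous envelope identification from~\cite{BFT,ARDPR} and a diagonalization to obtain a recovery sequence for $u \in W^{1,\infty}(\O;\R^n)$, and finally extend to general $u \in U(\O)$ by approximation in the space $U(\O)$ followed by the deviatoric version of the relaxation result \cite[Corollary 1.10]{ARDPR}.

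The lower bound is the main technical step and follows the blueprint of Steps~2a and 2b of Theorem~\ref{thm:alpha=1}. I would bound $g_\e$ below by keeping the $\frac{\lambda_w+2\e\mu_w/n}{2}(\tr\xi)^2$ piece intact and applying Young's inequality to $\e\mu_w|\xi_D|^2 + \kappa/\e$ together with an auxiliary perturbation by a determinant of $\xi_D$ (in 2D) or a cofactor term $4\mu_w A:\cof(\xi_D)$ with $A\in\mathrm{conv}(M)$ (in 3D). This produces, modulo vanishing correctors, the convex lower bound $\sqrt{2\kappa\tilde h_A(e_D(u_\e))}$; the correctors vanish thanks to $\sqrt\e u_\e\rightharpoonup 0$ in $H^1(\O;\R^n)$, which still holds here because $\e\chi_\e|e_D(u_\e)|^2$ is bounded (from $g_\e$) and $\e(\dive u_\e)^2$ is vanishing (since $\dive u_\e$ is bounded in $L^2$). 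For the hydrostatic contribution, $(1-\chi_\e)\dive u_\e \rightharpoonup \dive u$ in $L^2(\O)$ (because $\chi_\e\to 0$ in $L^1$ and $\dive u_\e$ is bounded in $L^2$), so weak $L^2$ lower semicontinuity directly yields $\int_\O(\frac{\lambda_s}{2}+\frac{\mu_s}{n})(\dive u)^2\,dx$, the damaged term $\chi_\e\frac{\lambda_w}{2}(\dive u_\e)^2\geq 0$ only helping. A Reshetnyak-type weak-* lower semicontinuity on the deviatoric measure localizes the deviatoric contribution, and the Sion minimax argument over $A$, together with the deviatoric analogue of $\max_A \tilde h_A = \tilde h$ on $\Msd$, identifies the liminf integrand as $\widetilde W$ on the absolutely continuous part and $\sqrt{2\kappa\tilde h}$ on $|E_D^s u|$.

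The hardest step will be verifying that the deviatoric auxiliary quadratic forms $\tilde h_A$ remain nonnegative on $\Msd$ (so that $\sqrt{2\kappa\tilde h_A}$ are convex), and that their pointwise supremum over $A\in\mathrm{conv}(M)$ indeed equals $\tilde h$; the 3D convexity argument of Step~2b relied on the full spectral decomposition through Lemma~\ref{lem:polyconvex}, and the trace-free constraint both simplifies some terms and removes slack provided by $(\tr\xi)^2$, so a careful separate computation is needed. A secondary subtlety is the upper-bound density step on $U(\O)$: one needs smooth approximations converging simultaneously strongly in $L^2$ for the divergence and in the Reshetnyak sense for the deviatoric linear-growth functional, which is not entirely standard and presumably requires adapting the strategy of~\cite{ARDPR} to incorporate the preservation of $\dive u$.
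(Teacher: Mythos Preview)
Your plan is essentially the paper's own, and the compactness, hydrostatic lower bound, and upper-bound machinery are handled exactly as you describe (for the density step on $U(\O)$ the paper simply invokes \cite[Remark~II.3.4]{Temam} and \cite[Theorem~1.1]{KR}, so your ``secondary subtlety'' is already taken care of). Two points deserve adjustment, however.

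First, the two-dimensional lower bound is much simpler than you suggest: every $\xi\in\mathbb M^{2\times 2}_D$ has $\det\xi\le 0$, hence is of the form $a\odot b$ by Lemma~\ref{lem:aodotb}, and Young's inequality gives directly $\tilde g_\e(e_D(u_\e))\ge 2\sqrt{\kappa\mu_w}\,|e_D(u_\e)|=\sqrt{2\kappa\tilde h(e_D(u_\e))}$. No determinant corrector or parameter $r$ is needed.

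Second, in three dimensions you have mislocated the difficulty. The nonnegativity of the auxiliary quadratic forms on $\Msd$ is \emph{not} the hard step: since $f=\tilde f$ and $g_\e=\tilde g_\e$ on $\mathbb M^{3\times 3}_D$, one simply reuses the \emph{same} $h_A$ as in Theorem~\ref{thm:alpha=1}, which is already nonnegative on all of $\Ms$ and hence on $\Msd$ by restriction. The genuine new ingredient---which your sketch glosses over---is that the corrector term now involves $\cof(e_D(v_\e))$ rather than $\cof(e(v_\e))$, and the chain ``$\cof(\nabla v_\e)\wto 0 \Rightarrow \limsup\int\varphi\,A:\cof(e(v_\e))\le 0$'' from Step~2b of Theorem~\ref{thm:alpha=1} does not directly reach the deviatoric cofactor. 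The paper closes this gap by proving that $\cof(e_D(v_\e))-\cof(e(v_\e))\to 0$ strongly in $L^1(\O;\Mst)$, via an explicit spectral computation showing that the difference is controlled by $\dive v_\e$ times eigenvalues of $e(v_\e)$; since $\dive v_\e\to 0$ strongly in $L^2$ (the fact you already noted) and the eigenvalues are bounded in $L^2$, the product vanishes in $L^1$. You should insert this step explicitly. After that, the conclusion follows as in the Hencky case, together with the elementary inequalities $f(\xi)\ge\tilde f(\xi_D)$ and $h(\xi)\ge\tilde h(\xi_D)$ to pass from the $\Ms$-infimal convolution to the $\Msd$-one.
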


For all $\xi \in \Ms$, let
\[
\label{d:tildefg}
 \widetilde W_\e(\xi):=\min\left\{\frac12 \mathbf{A}_s \xi:\xi,\frac12 \mathbf{A}_w^\e \xi:\xi + \frac{\kappa}{\e}\right\}.
\]
Denoting by $SQ\widetilde W_\e$ the symmetric quasiconvex envelope of $\widetilde W_\e$, from~\cite[Proposition 5.2]{AL} we know that it can be expressed as
$$SQ \widetilde W_\e(\xi)=\min_{0 \leq \theta \leq 1} \widetilde F_\e(\theta,\xi),$$
where 
\begin{multline*}
\widetilde F_\e(\theta,\xi) :=\frac12 \mathbf{A}_w^\e\xi:\xi + \frac{\kappa\theta}{\e}+ (1-\theta) \max_{\tau \in \Ms} \left\{\tau:\xi -\frac12 (\mathbf{A}_s-\mathbf{A}_w^\e)^{-1}\tau:\tau -\frac{\theta}{2\e}\widetilde G_\e(\tau)\right\}\\
=\frac12 \mathbf{A}_w^\e\xi:\xi + \frac{\kappa\theta^2}{\e}+ (1-\theta) \max_{\tau \in \Ms} \left\{\tau:\xi -\frac12 (\mathbf{A}_s-\mathbf{A}_w^\e)^{-1}\tau:\tau +\frac{\theta}{2\e}\left(2\kappa- \widetilde G_\e(\tau)\right)
\right\}
\end{multline*}
and, if $\tau_1 \leq \cdots \leq \tau_n$ are the ordered eigenvalues of $\tau \in \Ms$, 
$$\widetilde G_\e(\tau):=
\begin{cases}
\frac{\tau_1^2}{2\lambda_w/\e+\mu_w}& \text{ if } \frac{\lambda_w+2\e\mu_w}{2(\lambda_w+\e\mu_w)}(\tau_1+\tau_n) < \tau_1,\\
\frac{(\tau_1-\tau_n)^2}{4\mu_w}+\frac{(\tau_1+\tau_n)^2}{4(\lambda_w/\e+\mu_w)} & \text{ if }  \tau_1 \leq \frac{\lambda_w+2\e\mu_w}{2(\lambda_w+\e\mu_w)}(\tau_1+\tau_n)\leq \tau_n,\\
\frac{\tau_n^2}{2\lambda_w/\e+\mu_w}& \text{ if } \tau_n < \frac{\lambda_w+2\e\mu_w}{2(\lambda_w+\e\mu_w)}(\tau_1+\tau_n).
\end{cases}
$$
Let us  also denote by
$$\widetilde G(\tau):=\frac{(\tau_1-\tau_n)^2}{4\mu_w}$$
the pointwise limit of $\widetilde G_\e(\tau)$ as $\e \to 0$, which in particular satisfies $\widetilde G(\tau)=\widetilde G(\tau_D)$, where $\tau_D$ denotes the deviatoric part of $\tau$.

We first compute the pointwise limit of the family $(SQ\widetilde W_\e)_{\e>0}$ in order to get a candidate for the effective bulk energy density. 
\begin{prop}
For all $\xi \in \Ms$, we have
$$SQ\widetilde W_\e(\xi) \to (\tr\xi)^2\left(\frac{\lambda_s}{2}+\frac{\mu_s}{n}\right)+\widetilde W(\xi_D),$$
where
$$\widetilde W:=(\tilde f^*+I_{\widetilde K})^* \quad \text{ in }\Msd$$
where $\widetilde K:=\bigl\{\tau \in \Msd:  \; \widetilde G(\tau) \leq 2\kappa\bigr\}$ is the Tresca elasticity set, $\tilde f$ is defined in~\eqref{eq:tildefh}, and the conjugations are to be understood in $\Msd$.
\end{prop}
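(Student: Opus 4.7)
The strategy closely mirrors Proposition~\ref{prop:Walpha}: for each fixed $\xi\in\Ms$, I would show that $\widetilde F_\e(\cdot,\xi)$ $\Gamma$-converges on $[0,1]$ as $\e\to 0$ to the functional equal to $+\infty$ for $\theta\neq 0$ and to $(\tr\xi)^2(\lambda_s/2+\mu_s/n)+\widetilde W(\xi_D)$ at $\theta=0$, and then invoke the fundamental theorem of $\Gamma$-convergence on $SQ\widetilde W_\e(\xi)=\min_{0\leq\theta\leq 1}\widetilde F_\e(\theta,\xi)$. A preliminary observation is that $\widetilde G_\e(\tau)\to\widetilde G(\tau)=(\tau_1-\tau_n)^2/(4\mu_w)$ pointwise on $\Ms$, uniformly on bounded sets: for $\tau$ not purely hydrostatic we eventually land in the middle regime, where the residual $(\tau_1+\tau_n)^2/(4(\lambda_w/\e+\mu_w))$ is of order $\e$, while for $\tau=t\Id$ all three regimes yield a term vanishing in the limit. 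Crucially, $\widetilde G(\tau)=\widetilde G(\tau_D)$ depends only on the deviatoric part of $\tau$, so the limiting constraint $\widetilde G(\tau)\leq 2\kappa$ reads $\tau_D\in\widetilde K\subset\Msd$ while leaving the hydrostatic coordinate $\sigma_H:=\tr\tau/n$ unconstrained.

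For the $\Gamma$-lower bound, if $\theta_\e\to\theta$ with $\liminf\widetilde F_\e(\theta_\e,\xi)<+\infty$, choosing $\tau=0$ in the max gives $\widetilde F_\e\geq\kappa\theta_\e/\e$, hence $\theta=0$. For any $\tau_D$ with $\widetilde G(\tau_D)<2\kappa$ strictly and any $\sigma_H\in\R$, the test matrix $\tau:=\tau_D+\sigma_H\Id$ satisfies $\widetilde G_\e(\tau)\leq 2\kappa$ for $\e$ small; dropping the resulting nonnegative last term and passing to the limit (noting that $\mathbf{A}_s-\mathbf{A}_w^\e$ converges to the isotropic tensor $\mathbf{A}_s-\mathbf{A}_w^0$ with Lam\'e parameters $(\lambda_s-\lambda_w,\mu_s)$, where $\mathbf{A}_w^0\xi:=\lambda_w(\tr\xi)\Id$) yields
\[
  \liminf_\e \widetilde F_\e(\theta_\e,\xi) \geq \frac{\lambda_w}{2}(\tr\xi)^2 + \tau_D:\xi_D + \sigma_H\tr\xi - \frac{|\tau_D|^2}{4\mu_s} - \frac{n\sigma_H^2}{2(n(\lambda_s-\lambda_w)+2\mu_s)}.
\]
Taking the supremum first over $\sigma_H\in\R$ gives the contribution $(\tr\xi)^2((\lambda_s-\lambda_w)/2+\mu_s/n)$, which combined with $\lambda_w(\tr\xi)^2/2$ produces the desired $(\tr\xi)^2(\lambda_s/2+\mu_s/n)$; taking the supremum then over $\tau_D\in\widetilde K$ (using density of its strict interior and the definition of $\widetilde W$) yields $\widetilde W(\xi_D)$.

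For the $\Gamma$-upper bound, take $\theta_\e:=\lambda\e$ with $\lambda\geq 0$. Unlike in Proposition~\ref{prop:Walpha}, the hydrostatic energy $\tfrac12\mathbf{A}_w^\e\xi:\xi\to\tfrac{\lambda_w}{2}(\tr\xi)^2$ persists in the limit, so replacing $(\mathbf{A}_s-\mathbf{A}_w^\e)^{-1}$ by its lower bound $\mathbf{A}_s^{-1}$ would spuriously add $\lambda_w(\tr\xi)^2/2$; instead, retain the inverse and pass to $(\mathbf{A}_s-\mathbf{A}_w^\e)^{-1}\to (\mathbf{A}_s-\mathbf{A}_w^0)^{-1}$. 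Strict concavity and coercivity in $\tau$ of the objective, combined with uniform convergence of $\widetilde G_\e$ on bounded sets, justify the interchange of $\limsup_\e$ and $\sup_\tau$. Applying the minimax theorem~\cite[Ch.~VI, Prop.~2.3]{ET} to swap $\inf_{\lambda\geq 0}$ with $\sup_\tau$ produces the constrained maximisation $\sup\{\tau:\xi-\tfrac12(\mathbf{A}_s-\mathbf{A}_w^0)^{-1}\tau:\tau:\widetilde G(\tau)\leq 2\kappa\}$, which via the same orthogonal decomposition factorises into an unconstrained $\sigma_H$-part and a constrained $\tau_D\in\widetilde K$-part, producing the same expression as the lower bound. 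The principal subtlety, absent from Proposition~\ref{prop:Walpha}, is exactly this persistence of the hydrostatic $\lambda_w$-contribution, which forces one to keep $(\mathbf{A}_s-\mathbf{A}_w^\e)^{-1}$ unbounded and to exchange the supremum over the noncompact hydrostatic direction in $\Ms$ with the limit $\e\to 0$.
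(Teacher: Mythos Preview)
Your proposal is correct and follows essentially the same argument as the paper's proof: $\Gamma$-convergence of $\widetilde F_\e(\cdot,\xi)$ on $[0,1]$, the same test $\tau=0$ to force $\theta=0$, the same orthogonal splitting into hydrostatic and deviatoric parts via the explicit inverse $(\mathbf{A}_s-\mathbf{A}_w^\e)^{-1}$, and the same minimax step for the upper bound. The only tactical difference is that for the lower bound you fix $\tau$ with $\tau_D$ in the \emph{strict} interior of $\widetilde K$ and appeal to density, whereas the paper handles all of $\widetilde K$ directly by constructing a sequence $\tau_\e:=\sqrt{\widetilde G(\tau)/\widetilde G_\e(\tau)}\,\tau\to\tau$ with $\widetilde G_\e(\tau_\e)\leq 2\kappa$; both are equally valid.
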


\begin{proof}
	Fix $\xi \in \Ms$. We will prove that $(\widetilde F_\e(\cdot,\xi))_{\e>0}$ $\Gamma$-converges in $[0,1]$ to the function $\widetilde F_0(\cdot,\xi)$ defined by $\widetilde F_0(\theta,\xi):=(\tr\xi)^2\left(\frac{\lambda_s}{2}+\frac{\mu_s}{n}\right)+\widetilde W(\xi_D)$ if $\theta=0$ and $\widetilde F_0(\theta,\xi):=+\infty$ if $\theta\neq 0$.
	
\medskip
	
\noindent {\it Lower bound:} Let $(\theta_\e)_{\e>0}$ be a sequence in $[0,1]$. If $\liminf_\e \widetilde F_\e(\theta_\e,\xi)=+\infty$, there is nothing to prove. Without loss of generality, we can therefore assume that $\liminf_\e \widetilde F_\e(\theta_\e,\xi)<+\infty$. Moreover, up to a subsequence we can also suppose that the previous lower limit is actually a limit, and that $\theta_\e\to \theta \in [0,1]$. Since $\widetilde F_\e(\theta_\e,\xi) \geq \frac{\kappa\theta_\e}{\e}$ (choose $\tau = 0$), we deduce that $\theta=0$. We next estimate $\widetilde F_\e$ from below as follows: for all $\tau \in \Ms$,
\begin{equation}\label{eq:taueps}
\widetilde F_\e(\theta_\e,\xi)\geq \frac{\lambda_w}{2}(\tr\xi)^2+ (1-\theta_\e) \left\{ \tau:\xi  -\frac12 (\mathbf{A}_s-\mathbf{A}_w^\e)^{-1}\tau:\tau  +\frac{\theta_\e}{2\e} \left({2\kappa}-\widetilde G_\e(\tau)\right) \right\}.
\end{equation}
We claim that for all $\tau \in \Ms$ with $\tau_D \in \widetilde K$ and for all $\e>0$ small enough there exists $\tau_\e \in \Ms$ such that $\widetilde G_\e(\tau_\e) \leq 2\kappa$ and $\tau_\e \to \tau$. Indeed, on the one hand, if $(\tau_D)_1<(\tau_D)_n$, since $(\tau_D)_i=\tau_i-\frac{1}{n} \tr\tau$, we deduce that $\tau_1<\tau_n$. Thus, for $\e$ small we have
$$ \tau_1 \leq \frac{\lambda_w+2\e\mu_w}{2(\lambda_w+\e\mu_w)}(\tau_1+\tau_n)\leq \tau_n$$
and 
$$\widetilde G_\e(\tau)=\frac{(\tau_1-\tau_n)^2}{4\mu_w}+\frac{(\tau_1+\tau_n)^2}{4(\lambda_w/\e+\mu_w)} >0.$$
Setting $$\tau_\e:=\sqrt{\frac{\widetilde G(\tau)}{\widetilde G_\e(\tau)}} \tau,$$ 
we deduce that $\tau_\e \to \tau$ since $\widetilde G_\e(\tau)\to \widetilde G(\tau)$. In addition, using the $2$-homogeneity of $\widetilde G_\e$, we also have $\widetilde G_\e(\tau_\e) = \widetilde G(\tau) \leq 2\kappa$.
	
On the other hand, if $\tau_1=\tau_n$, then $\widetilde G_\e(\tau)\to 0$ as $\e\to0$ and in particular $\widetilde G_\e(\tau_\e)\leq 2\kappa$ for $\tau_\e:=\tau$ for $\e > 0$ sufficiently small. Writing~\eqref{eq:taueps} with $\tau_\e$, and passing to the limit as $\e \to 0$ we deduce that
$$\liminf_{\e \to 0}\widetilde F_\e(\theta_\e,\xi) \geq \frac{\lambda_w}{2}(\tr\xi)^2+\frac{(\tr\tau)(\tr\xi)}{n}+\tau_D:\xi_D-\frac{(\tr\tau)^2}{2n(n(\lambda_s-\lambda_w)+2\mu_s)}-\frac{1}{4\mu_s}|\tau_D|^2.$$
Here we used that for all $\tau \in \Ms$, $\e > 0$,
\begin{equation} \label{eq:CsCw_inv}
(\mathbf{A}_s-\mathbf{A}_w^\e)^{-1}\tau = \frac{\tr \tau}{n(n(\lambda_s-\lambda_w)+2(\mu_s-\e\mu_w))} \Id + \frac{1}{2(\mu_s-\e\mu_w)} \tau_D,
\end{equation}
which follows from a straightforward computation.
Maximizing first with respect to $\tr\tau\in \R$ and then with respect to $\tau_D\in \widetilde K$, we obtain
\begin{align*}
\liminf_{\e \to 0}\widetilde F_\e(\theta_\e,\xi) & \geq	(\tr\xi)^2\left(\frac{\lambda_s}{2}+\frac{\mu_s}{n}\right)+
 \sup_{\tau_D \in \widetilde K} \left\{\tau_D:\xi_D -\frac1{4\mu_s}|\tau_D|^2\right\}\\
 & = (\tr\xi)^2\left(\frac{\lambda_s}{2}+\frac{\mu_s}{n}\right)+(\tilde{f}^*+I_{\widetilde K})^*(\xi_D)\\
 &= (\tr\xi)^2\left(\frac{\lambda_s}{2}+\frac{\mu_s}{n}\right)+\widetilde W(\xi_D).
 \end{align*}
	
\medskip 
	
\noindent {\it Upper bound:} If $\theta \neq 0$, there is nothing to prove. We can thus assume without loss of generality that $\theta=0$. Let $\lambda\geq 0$ and set $\theta_\e:=\lambda\e \to 0$. Then, using~\eqref{eq:CsCw_inv} again,
\begin{align*}
\widetilde F_\e(\theta_\e,\xi) &= \frac12\mathbf{A}_w^\e\xi:\xi + \kappa\lambda^2\e \\
&\qquad+ (1-\lambda\e) \sup_{\tau \in \Ms} \biggl\{\frac{( \tr\tau)(\tr\xi)}{n}+\tau_D:\xi_D -\frac{(\tr\tau)^2}{2n\big(n(\lambda_s-\lambda_w)+2(\mu_s-\e\mu_w)\big)} \\
&\qquad\hspace{88pt} -\frac{1}{4(\mu_s -\e\mu_w)}|\tau_D|^2
+ \frac{\lambda}{2} \big(2\kappa -\widetilde G_\e(\tau)\big) \biggr\}.
\end{align*}
Notice that, since the supremum in the previous expression is nonnegative for every $\e$, it is in fact obtained on a compact subset of $\Ms$, which is independent of $\e$, as it can be easily checked. Thus, we may pass to the limit as $\e\to 0$ and then take the infimum in $\lambda\geq0$ to obtain (using~\cite[Chapter VI, Proposition 2.3]{ET} as in the proof of Proposition~\ref{prop:Walpha})
\begin{align*}
&\limsup_{\e \to 0} \widetilde F_\e(\theta_\e,\xi) - \frac{\lambda_w}{2}(\tr\xi)^2\\
&\qquad \leq \inf_{\lambda\geq 0}\sup_{\tau \in \Ms}  \left\{ \frac{(\tr\tau)(\tr\xi)}{n}+\tau_D:\xi_D-\frac{(\tr\tau)^2}{2n\big(n(\lambda_s-\lambda_w)+2\mu_s\big)}-\frac{1}{4\mu_s}|\tau_D|^2
+ \frac{\lambda}{2} \big(2\kappa -\widetilde G(\tau)\big) \right\}\\
&\qquad =  \sup_{\tau \in \Ms} \inf_{\lambda\geq 0}  \left\{ \frac{(\tr\tau)(\tr\xi)}{n}+\tau_D:\xi_D-\frac{(\tr\tau)^2}{2n(n(\lambda_s-\lambda_w)+2\mu_s)}-\frac{1}{4\mu_s}|\tau_D|^2
+ \frac{\lambda}{2} (2\kappa -\widetilde G(\tau)) \right\} \\
&\qquad =\sup_{T \in \R} \left\{\frac{\tr\xi}{n}T-\frac{T^2}{2n(n(\lambda_s-\lambda_w)+2\mu_s)} \right\} + \sup_{\tau_D \in \widetilde K} \left\{\tau_D:\xi_D-\frac{1}{4\mu_s}|\tau_D|^2 \right\},
\end{align*}
from which we deduce that
$$\limsup_{\e \to 0} \widetilde F_\e(\theta_\e,\xi) \leq (\tr\xi)^2\left(\frac{\lambda_s}{2}+\frac{\mu_s}{n}\right)+(f^*+I_{\widetilde K})^*(\xi_D)=(\tr\xi)^2\left(\frac{\lambda_s}{2}+\frac{\mu_s}{n}\right)+\widetilde W(\xi_D).$$

\medskip
	
\noindent {\it Convergence of minimizers.} According to the fundamental theorem of $\Gamma$-convergence, we deduce that
$$SQ\widetilde W_\e(\xi)=\min_{0 \leq \theta \leq 1}\widetilde F_\e(\theta,\xi) \to \min_{0 \leq \theta \leq 1}\widetilde F_0(\theta,\xi)=(\tr\xi)^2\left(\frac{\lambda_s}{2}+\frac{\mu_s}{n}\right)+\widetilde W(\xi_D),$$
which completes the proof of the proposition.
\end{proof}

We next identify the support function of the Tresca elasticity set $\widetilde K$.

\begin{lem}\label{l:h2}
For all $\xi \in \Msd$,
$$I_{\widetilde K}^*(\xi)=\sqrt{2\kappa \tilde h(\xi)},$$	where $\tilde h$ is defined in~\eqref{eq:tildefh} and the conjugation is to be understood in $\Msd$. In particular, $\widetilde W=\tilde f\, \Box \, \sqrt{2\kappa \tilde h}$, where the inf-convolution is to be understood in $\Msd$.
\end{lem}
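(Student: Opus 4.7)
The strategy mirrors that of Lemma~\ref{l:h}, replacing $G$ by $\widetilde G$ and working throughout in the subspace $\Msd$, where all convex-analytic operations (polars, conjugates, etc.) are meant. The key steps are as follows.

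First, I would rewrite the support function of $\widetilde K$ as a polar of a closed gauge. Observing that $\widetilde K=\{\tau\in\Msd:\tilde k(\tau)\le 1\}$ for $\tilde k(\tau):=\sqrt{\widetilde G(\tau)/(2\kappa)}$, which is a nonnegative, real-valued, positively $1$-homogeneous, lower semicontinuous function vanishing at $0$, Corollary~15.3.1 of~\cite{R} gives
\[
  \tfrac12 (I_{\widetilde K}^*)^2=\tfrac12 (\tilde k^\circ)^2=(\tfrac12 \tilde k^2)^*=\bigl(\tfrac{1}{4\kappa}\widetilde G\bigr)^*\quad\text{on }\Msd.
\]

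The main computation is then to identify $\widetilde G^*$ on $\Msd$. For fixed $\xi\in\Msd$, by Von Neumann's trace inequality (or by aligning eigenvectors and invoking the rearrangement inequality), the supremum in $\widetilde G^*(\xi)=\sup_{\tau\in\Msd}\{\tau:\xi-\widetilde G(\tau)\}$ may be restricted to those $\tau$ sharing an eigenbasis with $\xi$ and whose ordered eigenvalues match the ordering of those of $\xi$. Setting $t:=\tau_n-\tau_1\ge 0$ and writing $\tau_i=s_i-\bar s$ with $s_i\in[0,t]$, the constraint $\sum\tau_i=0$ is automatic and, using $\sum\xi_i=0$, one finds $\sum \tau_i\xi_i=\sum s_i\xi_i$. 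Maximizing coordinate-wise (choose $s_i=t$ when $\xi_i\ge 0$, $s_i=0$ otherwise) yields $\sup_\tau \tau:\xi=\tfrac{t}{2}\sum_i|\xi_i|$, since $\sum(\xi_i)^+=\tfrac12\sum|\xi_i|$ on $\Msd$. Optimizing the resulting one-dimensional problem
\[
  \widetilde G^*(\xi)=\sup_{t\ge 0}\Bigl\{\tfrac{t}{2}\sum_i|\xi_i|-\tfrac{t^2}{4\mu_w}\Bigr\}
\]
gives $t^\star=\mu_w\sum_i|\xi_i|$ and $\widetilde G^*(\xi)=\tfrac{\mu_w}{4}\bigl(\sum_i|\xi_i|\bigr)^2=\tfrac14\tilde h(\xi)$.

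To conclude, I use $2$-homogeneity of $\widetilde G$ and $\tilde h$ together with the scaling rule $(\lambda\psi)^*(\xi)=\lambda\psi^*(\xi/\lambda)$ to get
\[
  \bigl(\tfrac{1}{4\kappa}\widetilde G\bigr)^*(\xi)=\tfrac{1}{4\kappa}\widetilde G^*(4\kappa\xi)=\tfrac{1}{16\kappa}\tilde h(4\kappa\xi)=\kappa\,\tilde h(\xi),
\]
so that $I_{\widetilde K}^*(\xi)=\sqrt{2\kappa\tilde h(\xi)}$. The second claim is then automatic: since $I_{\widetilde K}$ is proper, convex and lower semicontinuous on $\Msd$, biconjugation gives $I_{\widetilde K}^{**}=I_{\widetilde K}$, hence
\[
  \widetilde W=(\tilde f^*+I_{\widetilde K})^*=(\tilde f^*+(I_{\widetilde K}^*)^*)^*=\tilde f\,\Box\,I_{\widetilde K}^*=\tilde f\,\Box\,\sqrt{2\kappa\tilde h},
\]
using the standard infimal convolution identity $(\phi_1^*+\phi_2^*)^*=\phi_1\,\Box\,\phi_2$.

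The only subtle point, and the step I would check most carefully, is the eigenvalue-reduction in the computation of $\widetilde G^*$: one must verify both that aligning eigenbases is optimal (Von Neumann / Ky Fan) and that reordering the $s_i$ to match the sorted eigenvalues of $\xi$ does not violate the constraint $\sum\tau_i=0$ (which is automatic) nor change the objective. Everything else is a direct transcription of the proof of Lemma~\ref{l:h}.
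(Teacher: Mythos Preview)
Your argument is correct and the reduction via the gauge/polar identity is identical to the paper's. Where you genuinely diverge is in the computation of $\widetilde G^*$ on $\Msd$. The paper does not compute this conjugate directly; instead it introduces the one-parameter family $G_\lambda$ (the Hencky function $G$ of~\eqref{d:G} with $\lambda_w$ replaced by $\lambda$) together with $h_\lambda$, invokes the Allaire--Lods identity $G_\lambda^*=h_\lambda/4$ in the full space $\Ms$ (already used in Lemma~\ref{l:h}), and then exploits $\inf_{\lambda\ge 0}G_\lambda=\widetilde G$ and $h_\lambda|_{\Msd}=\tilde h$ to interchange suprema and pass from $\Ms$ to $\Msd$. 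Your route is a direct eigenvalue optimization: Von Neumann's trace inequality reduces to co-diagonal matrices, the parametrization $s_i\in[0,t]$ together with $\sum\xi_i=0$ collapses the inner maximum to $\tfrac{t}{2}\sum_i|\xi_i|$, and a one-variable optimization in $t$ finishes. This is more elementary and self-contained (it avoids any appeal to~\cite{AL}), whereas the paper's argument is shorter given the machinery already in place. One minor point worth tightening in your write-up: your parametrization actually ranges over $\tau$ with spread at most $t$ rather than exactly $t$; this is harmless because the inner supremum is positively $1$-homogeneous in $t$, so the outer optimization is unaffected, but it would be cleaner to say so explicitly.
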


\begin{proof}
Arguing as in the proof of Lemma~\ref{l:h}, we only need to check that $\widetilde G^*=\tilde h/4$ in $\Msd$. For all $\lambda\geq 0$ and all $\tau \in \Ms$, let
$$G_\lambda(\tau):=
\begin{cases}
\frac{\tau_1^2}{\lambda+2\mu_w}& \text{ if } \frac{\lambda+2\mu_w}{2(\lambda+\mu_w)}(\tau_1+\tau_n)<\tau_1,\\
\frac{(\tau_1-\tau_n)^2}{4\mu_w}+\frac{(\tau_1+\tau_n)^2}{4(\lambda+\mu_w)} & \text{ if }  \tau_1 \leq \frac{\lambda+2\mu_w}{2(\lambda+\mu_w)}(\tau_1+\tau_n)\leq \tau_n,\\
\frac{\tau_n^2}{\lambda+2\mu_w}& \text{ if } \tau_n < \frac{\lambda+2\mu_w}{2(\lambda+\mu_w)}(\tau_1+\tau_n),
\end{cases}$$
and for all $\xi \in \Ms$,
$$h_\lambda(\xi):=\mu_w \left(\sum_{i=1}^n|\xi_i| \right)^2+(\lambda+\mu_w) \left(\sum_{i=1}^n \xi_i \right)^2.$$
Clearly, $h_\lambda(\xi) = \tilde h(\xi)$ for any $\lambda \geq 0$ if $\xi \in \Msd$. Thus, arguing as in the proof of Lemma~\ref{l:h}, we have for all $\xi \in \Msd$,
$$\frac{\tilde h(\xi)}{4}=\frac{h_\lambda(\xi)}{4}=\sup_{\tau\in\Ms} \bigl\{\tau:\xi-G_\lambda(\tau) \bigr\},$$
that is, the convex conjugate of $G_\lambda$ in the full space $\Ms$. We compute
\begin{align*}
\frac{\tilde h(\xi)}{4} &=\sup_{\lambda\geq0}\sup_{\tau \in \Ms}\bigl\{\tau:\xi-G_\lambda(\tau)\bigr\}=\sup_{\tau \in \Ms}\sup_{\lambda\geq0}\,\bigl\{\tau:\xi-G_\lambda(\tau)\bigr\}\\
&=\sup_{\tau \in \Ms}\bigl\{\tau:\xi-\inf_{\lambda\geq 0} G_\lambda(\tau)\bigr\}=\sup_{\tau \in \Ms}\bigl\{\tau:\xi-\widetilde G(\tau)\bigr\}\\
&=\sup_{\tau \in \Msd}\bigl\{\tau:\xi-\widetilde G(\tau)\bigr\}=\widetilde G^*(\xi),
\end{align*}
which concludes the proof.
\end{proof}
The following result is the analogue of Proposition~\ref{prop:W_alpha} in the present Tresca regime. The proof is identical, therefore it will be omitted.
\begin{prop}\label{prop:tildeW}
The function $\widetilde W$ is convex,
\[
c|\xi| - \frac{1}{c}\leq \widetilde W(\xi) \leq C|\xi| \quad \text{ for all }\xi \in \Msd,
\]
for some $c,C>0$, and 
\[
|\widetilde W(\xi_1) - \widetilde W(\xi_2)|\leq L|\xi_1-\xi_2| \quad \text{  for all }\xi_1, \; \xi_2 \in \Msd,
\]
for some $L>0$.
In addition, its recession function, defined for all $\xi \in \Msd$ by
$$\widetilde W^\infty(\xi)=\lim_{t \to +\infty}\frac{\widetilde W(t\xi)}{t},$$
exists and is given by
$$\widetilde W^\infty(\xi)=\sqrt{2\kappa \tilde h(\xi)} \quad \text{ for all }\xi \in \Msd.$$
Finally, for all $a,b \in \R^n$ with $a \cdot b=0$,
$$\widetilde W^\infty(a \odot b)=2\sqrt{\kappa \mu_w }|a \odot b|.$$
\end{prop}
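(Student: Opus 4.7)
The plan is to mirror the proof of Proposition~\ref{prop:W_alpha}, now working in the ambient space $\Msd$ and using Lemma~\ref{l:h2} in place of Lemma~\ref{l:h}. \emph{Convexity and lower semicontinuity} of $\widetilde W$ would follow directly from the biconjugate representation $\widetilde W=(\tilde f^*+I_{\widetilde K})^*$, which exhibits $\widetilde W$ as a supremum of affine functions. For the \emph{upper growth bound}, since $\tilde f^*+I_{\widetilde K}\geq I_{\widetilde K}$ one has $\widetilde W \leq I_{\widetilde K}^* = \sqrt{2\kappa\tilde h}$ by Lemma~\ref{l:h2}, and $\tilde h$ is a nonnegative quadratic form on $\Msd$, so $\sqrt{2\kappa\tilde h(\xi)} \leq C|\xi|$.

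For the \emph{lower growth bound} I would exploit the dual formulation
\[
\widetilde W(\xi) = \sup_{\tau \in \widetilde K}\Bigl\{\tau:\xi - \tfrac{1}{4\mu_s}|\tau|^2\Bigr\}
\]
together with the observation that $\widetilde K$ has nonempty interior in $\Msd$: for $\tau \in \Msd$ one has $\tau_n-\tau_1 \leq |\tau_1|+|\tau_n|\leq \sqrt{2(\tau_1^2+\tau_n^2)}\leq\sqrt{2}|\tau|$, so the ball $B_r(0) \subset \Msd$ of radius $r:=2\sqrt{\kappa\mu_w}$ is contained in $\widetilde K$. Choosing $\tau = r\xi/|\xi|$ in the supremum yields $\widetilde W(\xi) \geq r|\xi| - r^2/(4\mu_s)$, which gives the claimed coercivity. \emph{Lipschitz continuity} then follows automatically from convexity combined with linear growth, exactly as in Proposition~\ref{prop:W_alpha}. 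The only genuinely new verification compared to that proof is this check that $\widetilde K$ contains a ball in $\Msd$.

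For the \emph{recession function}, convexity together with $\widetilde W(0)=0$ guarantees that $t \mapsto \widetilde W(t\xi)/t$ is nondecreasing, so $\widetilde W^\infty$ exists on $\Msd$. The upper bound $\widetilde W^\infty \leq \sqrt{2\kappa \tilde h}$ is inherited from $\widetilde W \leq \sqrt{2\kappa\tilde h}$ and the positive $1$-homogeneity of the right-hand side. For the matching lower bound I would use the inf-convolution identity $\widetilde W = \tilde f \,\Box\, \sqrt{2\kappa\tilde h}$ from Lemma~\ref{l:h2}: for each $t>0$ there exists $\xi'_t \in \Msd$ realizing the infimum, and since $\tilde f$ and $\tilde h$ are $2$-homogeneous,
\[
\frac{\widetilde W(t\xi)}{t} = t\, \tilde f(\xi - \xi'_t) + \sqrt{2\kappa\tilde h(\xi'_t)}.
\]
The linear growth of $\widetilde W$ combined with the quadratic coercivity of $\tilde f$ forces $\xi'_t \to \xi$ as $t \to \infty$, and continuity of $\tilde h$ then yields $\widetilde W^\infty(\xi) \geq \sqrt{2\kappa\tilde h(\xi)}$.

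Finally, for the \emph{rank-one symmetric case}: if $a,b \in \R^n$ satisfy $a\cdot b = 0$, then $\tr(a\odot b)=a\cdot b=0$ and so $a\odot b \in \Msd$. By Lemma~\ref{lem:aodotb}, $a \odot b$ has at most two nonzero eigenvalues of opposite sign; the traceless constraint then forces them to equal $\pm\lambda$ for some $\lambda \geq 0$, so $\sum_i |\xi_i| = 2\lambda$ and $|a\odot b|^2 = 2\lambda^2$. Substituting into the definition of $\tilde h$ gives $\tilde h(a\odot b) = 4\mu_w \lambda^2 = 2\mu_w |a\odot b|^2$, whence $\widetilde W^\infty(a\odot b) = \sqrt{2\kappa\tilde h(a\odot b)} = 2\sqrt{\kappa\mu_w}\,|a\odot b|$, as claimed. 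Every other step is a direct transcription of the corresponding step in Proposition~\ref{prop:W_alpha}, which is why the author defers the formal write-up.
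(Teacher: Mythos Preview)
Your proposal is correct and follows the same strategy as the paper, which simply states that the proof is identical to that of Proposition~\ref{prop:W_alpha} and omits it. The only substantive deviation is your lower growth bound: whereas a direct transcription of Proposition~\ref{prop:W_alpha} would invoke the pointwise-limit description $\widetilde W(\xi)=\lim_{\e\to0}SQ\widetilde W_\e(\xi)$ together with the analogue of~\eqref{eq:boundbelow}, you instead argue directly from the dual representation by showing $B_r(0)\subset\widetilde K$ in $\Msd$ and testing with $\tau=r\xi/|\xi|$. This is a clean, self-contained alternative that avoids any appeal to the $\e$-approximation; both arguments are elementary, and yours has the mild advantage of not relying on the approximating family at all.
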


We are now in position to prove Theorem~\ref{thm:tilde}. 

\begin{proof}[Proof of Theorem~\ref{thm:tilde}] 
	
\noindent {\it Step 1: The upper bound.}
An analogous argument to that used in the proof of Theorem~\ref{thm:alpha=1} (employing~\cite[Remark II.3.4]{Temam} and~\cite[Theorem 1.1]{KR} in place of~\cite[Proposition I.1.3]{Temam} and ~\cite[Corollary 1.10]{ARDPR}) shows that it is enough to establish the upper bound for $u \in W^{1,\infty}(\O;\R^n)$ and $\chi=0$. According to the dominated convergence theorem, we infer that
$$\int_\O\left(\frac{\lambda_s}{2}+\frac{\mu_s}{n}\right)(\dive u)^2 \, dx+ \int_\O \widetilde W(e_D(u))\, dx = \lim_{\e \to 0}\int_\O SQ\widetilde W_\e(e(u))\, dx.$$
For every $\e>0$,
$$v \in W^{1,1}(\O;\R^n) \mapsto \int_\O SQ\widetilde W_\e(e(v))\, dx$$
is the $L^1(\O;\R^n)$-lower semicontinuous envelope restricted to $W^{1,1}(\O;\R^n)$ of
$$v \in W^{1,1}(\O;\R^n) \mapsto \int_\O \widetilde W_\e(e(v))\, dx,$$
see~\cite{BFT,ARDPR}. It is thus possible to find a recovery sequence $(u^\e_k)_{k \in \N} \subset W^{1,1}(\O;\R^n)$ such that
$u^\e_k \to u$ in $L^1(\O;\R^n)$ as $k \to \infty$, and
$$\int_\O SQ\widetilde W_\e(e(u))\, dx=\lim_{k \to +\infty} \int_\O \widetilde W_\e(e(u^\e_k))\, dx.$$
Using a diagonalization argument, we extract a subsequence $k(\e) \to \infty$ as $\e \to 0$ such that $v_\e:=u_{k(\e)}^\e \to u$ in $L^1(\O;\R^n)$ and
$$\int_\O\left(\frac{\lambda_s}{2}+\frac{\mu_s}{n}\right)(\dive u)^2 \, dx+ \int_\O \widetilde W(e_D(u))\, dx=\lim_{\e \to 0} \int_\O \widetilde W_\e(e(v_\e))\, dx.$$
Then, defining the damaged sets as
$$D_\e:=\left\{x \in \O : \; (\mathbf{A}_s-\mathbf{A}_w^\e)e(v_\e)(x):e(v_\e)(x) \geq \frac{2\kappa}{\e}\right\},$$
we obtain by construction that
$$\limsup_{\e \to 0} \widetilde E_\e(v_\e,\chi_{D_\e})= \lim_{\e \to 0}\int_\O \widetilde W^\e(e(v_\e))\, dx= \widetilde \E_0(u,0),$$
which completes the proof of the upper bound.
	
\medskip
	
\noindent {\it Step 2: The lower bound.} 
For all $\xi \in \Msd$ we define
$$\tilde g_\e(\xi):=\e \mu_w|\xi|^2 +\frac{\kappa}{\e}.$$
Let $(u_\e,\chi_\e)_{\e>0}$ be a sequence in $L^1(\O;\R^n) \times L^1(\O)$ such that $u_\e \to u$ in $L^1(\O;\R^n)$, $\chi_\e \to 0$ in $L^1(\O)$ and $\liminf_\e \widetilde E_\e(u_\e,\chi_\e)<+\infty$. Up to a subsequence, we additionally have that $u_\e\rightharpoonup u$ weakly* in $BD(\O)$ and $\dive u_\e \rightharpoonup \dive u$ weakly in $L^2(\O)$. Moreover, since $\chi_\e \to 0$ strongly in $L^2(\O)$ and the sequence $((1-\chi_\e)\dive u_\e)_{\e>0}$ is bounded in $L^2(\O)$, we have that $(1-\chi_\e)\dive u_\e \rightharpoonup\dive u$ weakly in $L^2(\O)$. Using that $e(u_\e)=\frac{1}{n}(\dive u_\e){\rm Id}+e(u_\e)_D$ and the weak lower semicontinuity of the norm, we have
\begin{align*}
 &\liminf_{\e\to 0}\widetilde E_\e(u_\e,\chi_\e)\\
  &\qquad \geq  \liminf_{\e\to 0}\left\{\left(\frac{\lambda_s}{2}+\frac{\mu_s}{n}\right)\int_\O(1-\chi_\e)(\dive u_\e)^2 \, dx+\int_\O (1-\chi_\e) \tilde f(e_D(u_\e))+\chi_\e \tilde g_\e(e_D(u_\e))\, dx\right\}\\
&\qquad \geq \left(\frac{\lambda_s}{2}+\frac{\mu_s}{n}\right)\int_\O(\dive u)^2 \, dx+ \liminf_{\e\to 0}\int_\O (1-\chi_\e) \tilde f(e_D(u_\e))+\chi_\e \tilde g_\e(e_D(u_\e))\, dx.
\end{align*}
For a further use, we also have that the sequence $v_\e:=\sqrt{\e} u_\e$ is bounded in $H^1(\O;\R^n)$, so that $v_\e \wto 0$ weakly in $H^1(\O;\R^n)$ and $\dive v_\e \to 0$ strongly in $L^2(\O)$.
    
\medskip
    
\noindent {\it Step 2a: The two-dimensional case.}  Since every matrix  $\xi \in \mathbb M^{2 \times 2}_D$ satisfies $\det(\xi)\leq0$, Lemma~\ref{lem:aodotb} ensures that $\xi=a \odot b$ for some $a$ and $b \in \R^2$. Therefore, according to Young's inequality,
$$\tilde g_\e(e_D(u_\e)) \geq 2 \sqrt{\kappa \mu_w}|e_D(u_\e)| = \sqrt{2\kappa \tilde h(e_D(u_\e))}.$$
Hence, since $\widetilde W=\tilde f \, \Box \, \sqrt{2\kappa \tilde h}$, 
$$\liminf_{\e\to 0}\widetilde E_\e(u_\e,\chi_\e)\geq \left(\frac{\lambda_s}{2}+\frac{\mu_s}{2}\right)\int_\O(\dive u)^2\, dx+
\liminf_{\e\to 0}\int_{\O}\widetilde W(e_D(u_\e))\,dx$$
and we conclude by the weak* lower semicontinuity theorem for convex functionals of measures.
    
\medskip
    
\noindent {\it Step 2b: The three-dimensional case.}
We use the same notation and the same arguments as for the three-dimensional case in Theorem~\ref{thm:alpha=1}. We first note that since $f=\tilde f$ and $g_\e=\tilde g_\e$ on $\mathbb M^{3 \times 3}_D$, for all open sets $\omega \subset \O$, all $\varphi \in \mathcal C_c(\omega)$ with $0 \leq \varphi \leq 1$, and all $A \in {\rm conv}(M)$, we have for every $\gamma > 0$ and $\e > 0$ small enough (see the corresponding argument in the Hencky case),
 \begin{align*} 
&\int_\omega (1-\chi_\e) \tilde f(e_D(u_\e))+\chi_\e  \tilde g_\e(e_D(u_\e)) \, dx \\
&\qquad\geq  \int_\omega \varphi \, \big[ (1-\chi_\e) f(e_D(u_\e)) + \chi_\e g_\e (e_D(u_\e)) \big] \, dx\\
&\qquad\geq (1-\gamma) \int_\omega \varphi \,  (f \,\Box\,  \sqrt{2\kappa h_A})(e_D(u_\e)) \, dx -2\e\mu_w  \int_\omega \varphi\, A:\cof(e_D(u_\e))\, dx.
 \end{align*}
We claim that
\begin{equation}
 \label{eq:cof}
\cof(e_D(v_\e)) - \cof(e(v_\e))\to 0 \quad \text{ strongly in }L^1(\O;\Mst). 
\end{equation}
Indeed, any matrix  $\xi \in \Mst$ can be written as $\xi=P\Lambda P^{-1}$ with $\Lambda={\rm diag}(\xi_1,\xi_2,\xi_3) \in \Mst$ diagonal and $P \in SO(3)$. Then, $\xi_D=P\Lambda_DP^{-1}$ with 
$$\Lambda_D={\rm diag}\left(\xi_1-\frac{\tr\xi}{3},\xi_2-\frac{\tr\xi}{3},\xi_3-\frac{\tr\xi}{3}\right),$$
and Lemma~\ref{lem:polyconvex} shows that $\cof(\xi)=P\cof(\Lambda)P^{-1}$ and $\cof(\xi_D)=P\cof(\Lambda_D)P^{-1}$ with $\cof(\Lambda)={\rm diag}(\xi_2\xi_3,\xi_1\xi_3,\xi_1\xi_2)$ and
$$\cof(\Lambda_D)={\rm diag}\left(\Big(\xi_2-\frac{\tr\xi}{3}\Big)\Big(\xi_3-\frac{\tr\xi}{3}\Big),\Big(\xi_1-\frac{\tr\xi}{3}\Big)\Big(\xi_3-\frac{\tr\xi}{3}\Big),\Big(\xi_1-\frac{\tr\xi}{3}\Big)\Big(\xi_2-\frac{\tr\xi}{3}\Big)\right).$$
Therefore, $\cof(\xi)-\cof(\xi_D)=P\big(\cof(\Lambda)-\cof(\Lambda_D)\big)P^{-1}$
with
$$\cof(\Lambda)-\cof(\Lambda_D)=-\frac{(\tr\xi)^2}{9}{\rm Id} +\frac{\tr\xi}{3}\,{\rm diag}(\xi_2+\xi_3,\xi_1+\xi_3,\xi_1+\xi_2)$$
Specifying the previous expression to $\xi=e(v_\e)$ and observing that the eigenvalues of $e(v_\e)$ are bounded in $L^2(\O)$ uniformly in $\e>0$ (since the spectral radius satisfies $\rho(e(v_\e))\leq |e(v_\e)|$), while $\dive v_\e \to0$ strongly in $L^2(\O)$, we finally deduce~\eqref{eq:cof}.

Arguing as in the proof of Theorem~\ref{thm:alpha=1}, we conclude that 
$$
 \limsup_{\e \to 0} \int_{\omega}\varphi\,A: \cof(e_D(v_{\e}))\, dx= \limsup_{\e \to 0} \int_{\omega}\varphi\,A: \cof(e(v_{\e}))\, dx\leq 0.
$$
 Moreover, by the weak* lower semicontinuity theorem for convex functionals of measures, we have
 \begin{align*}
 &\liminf_{\e \to 0} \int_\omega \varphi \, (f \wedge \sqrt{2\kappa h_A})(e_D(u_\e))\, dx \\
 &\qquad\geq  \liminf_{\e \to 0}\int_\omega \varphi \, (f\,\Box\, \sqrt{2\kappa h_A})(e_D(u_\e))\, dx\\
&\qquad\geq \int_\omega \varphi \, (f\,\Box\, \sqrt{2\kappa h_A})(e_D(u))\, dx + \int_\omega \varphi(f\,\Box\, \sqrt{2\kappa h_A})^\infty\left(\frac{dE^s u}{d|E^s u|} \right)d|E^su|.
 \end{align*}

The remainder of the proof follows the lines of Theorem~\ref{thm:alpha=1}. Passing to the supremum over $\varphi\in \mathcal C_c(\omega)$, $0\leq\varphi\leq1$, and over $A\in {\text{conv}}(M)$, we find in a similar fashion as before, in particular letting $\gamma \to 0$, that
 \begin{align*}
 \liminf_{\e \to 0} \widetilde E_\e(u_\e,\chi_\e) 
 &\geq \left(\frac{\lambda_s}{2}+\frac{\mu_s}{3}\right) \int_\O(\dive u)^2\, dx+ \int_\O  (f\,\Box\, \sqrt{2\kappa h})(e_D(u))\, dx\\
 &\hspace{2cm} +\int_\O(f\,\Box\, \sqrt{2\kappa h})^\infty\left(\frac{dE^s u}{d|E^s u|} \right)d|E^su|\\
&\geq \left(\frac{\lambda_s}{2}+\frac{\mu_s}{3}\right) \int_\O(\dive u)^2\, dx+ \int_\O  ( \tilde f\,\Box\, \sqrt{2\kappa \tilde h})(e_D(u))\, dx\\
& \hspace{2cm}+ \int_\O(\tilde f\,\Box\, \sqrt{2\kappa \tilde h})^\infty\left(\frac{dE^s u}{d|E^s u|} \right)d|E^su|\\
&= \left(\frac{\lambda_s}{2}+\frac{\mu_s}{3}\right) \int_\O(\dive u)^2 \, dx+ \int_\O \widetilde W(e_D(u))\, dx+\int_\O\widetilde W^\infty\left(\frac{dE^s u}{d|E^s u|} \right)d|E^su|.
\end{align*}
Note that in the first inequality the inf-convolutions are to be understood in the full space $\mathbb M^{3 \times 3}_{\rm sym}$, while in the second inequality the inf-convolutions are to be understood in $\mathbb M^{3 \times 3}_D$. Moreover, we have used $f(\xi) \geq f(\xi_D)=\tilde f(\xi_D)$ and $\sqrt{2\kappa h(\xi)} \geq \sqrt{2\kappa h(\xi_D)}=\sqrt{2\kappa \tilde h(\xi_D)}$. The proof of the theorem is complete.
\end{proof}

The following proposition is the corresponding of Proposition~\ref{prop:Walpha_char} in the Tresca regime.	

\begin{prop} \label{prop:Walpha_div}
We have
\begin{multline*}
\widetilde W =\sup\Big\{\varphi : \Msd \to \R \text{ convex, }\varphi(\xi) \leq \tilde f(\xi) \text{ for all }\xi \in \Msd\\
\varphi(a \odot b) \leq \sqrt{2\kappa \mathbf{A}_w(a\odot b):(a \odot b)} \text{ for all }a,b \in \R^n \text{ with } a \cdot b = 0 \Big\}.
\end{multline*}
\end{prop}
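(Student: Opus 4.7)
The plan is to imitate the strategy of Proposition~\ref{prop:Walpha_char}, working throughout in $\Msd$. Define $\widetilde H : \Msd \to [0,+\infty]$ by
\[
\widetilde H(\xi) := \begin{cases} \sqrt{2\kappa\, \mathbf{A}_w \xi : \xi} & \text{if } \xi = a \odot b \text{ for some } a, b \in \R^n \text{ with } a \cdot b = 0,\\ +\infty & \text{otherwise.} \end{cases}
\]
Note that $a \odot b$ has trace $a \cdot b$, so the finiteness domain of $\widetilde H$ is contained in $\Msd$. Since any real-valued convex function on the finite-dimensional space $\Msd$ is automatically continuous (hence lsc), the supremum in the statement is exactly the largest lsc convex function below $\tilde f \wedge \widetilde H$, that is, $(\tilde f \wedge \widetilde H)^{**}$ (the biconjugate being computed in $\Msd$). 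So the proposition amounts to the identity $\widetilde W = (\tilde f \wedge \widetilde H)^{**}$.

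From Lemma~\ref{l:h2} and the fact recalled in the preliminaries (applied to the convex $\tilde f$ with $\tilde f(0) = 0$ and the positively $1$-homogeneous $\sqrt{2\kappa \tilde h}$), one has $\widetilde W = \tilde f \, \Box \, \sqrt{2\kappa \tilde h} = (\tilde f \wedge \sqrt{2\kappa \tilde h})^{**}$. Using the standard identity $(\phi \wedge \psi)^{**} = (\phi^{**} \wedge \psi^{**})^{**}$ (both sides being the lsc convex envelope of the same pointwise infimum, up to routine verification), the whole problem reduces to proving
\[
\widetilde H^{**} = \sqrt{2\kappa \tilde h} \qquad \text{in } \Msd.
\]

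For the inequality $\widetilde H^{**} \geq \sqrt{2\kappa \tilde h}$, I first check that $\widetilde H$ coincides with $\sqrt{2\kappa \tilde h}$ on its finiteness domain: if $\xi = a \odot b$ with $a \cdot b = 0$, then $\tr \xi = 0$ and Lemma~\ref{lem:aodotb} gives nonzero eigenvalues $\pm \lambda$, so $\tilde h(\xi) = \mu_w(2|\lambda|)^2 = 2\mu_w |\xi|^2 = \mathbf{A}_w\xi:\xi$. Hence $\sqrt{2\kappa \tilde h} \leq \widetilde H$ on $\Msd$, and since $\sqrt{2\kappa \tilde h}$ is convex (Lemma~\ref{l:h2}) and lsc, it is bounded above by $\widetilde H^{**}$.

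The main step is the reverse inequality $\widetilde H^{**} \leq \sqrt{2\kappa \tilde h} = I_{\widetilde K}^*$, equivalently $\widetilde H^* \geq I_{\widetilde K}$, i.e., $\widetilde H^*(\tau) = +\infty$ for every $\tau \in \Msd \setminus \widetilde K$. Fix such a $\tau$. Since $\widetilde H$ depends only on the eigenvalues (through $\mathbf{A}_w\xi:\xi$ and the condition $a \cdot b = 0$), orthogonal invariance lets me reduce to $\tau$ diagonal with ordered eigenvalues $\tau_1 \leq \dots \leq \tau_n$ satisfying $\tau_n - \tau_1 > 2\sqrt{2\kappa\mu_w}$. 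Choose the test vectors $a_t := t(e_1 + e_n)$ and $b := e_n - e_1$ for $t > 0$. Then $a_t \cdot b = 0$ and a direct computation gives $a_t \odot b = t(e_n \otimes e_n - e_1 \otimes e_1) \in \Msd$, with $\tau : (a_t \odot b) = t(\tau_n - \tau_1)$ and $\mathbf{A}_w(a_t \odot b):(a_t \odot b) = 2\mu_w \cdot 2t^2 = 4\mu_w t^2$. Consequently,
\[
\widetilde H^*(\tau) \geq \sup_{t > 0} \Bigl\{ t(\tau_n - \tau_1) - 2t\sqrt{2\kappa \mu_w}\, \Bigr\} = +\infty,
\]
which closes the argument. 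The main (but modest) obstacle is precisely this last computation; it is substantially simpler than Case~II of Proposition~\ref{prop:Walpha_char} because here $\widetilde G$ has only one regime on $\Msd$ and the explicit rank-one symmetric matrix $e_n\otimes e_n - e_1\otimes e_1$ directly witnesses the unboundedness of $\widetilde H^*$ outside $\widetilde K$.
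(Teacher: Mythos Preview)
Your proof is correct and follows essentially the same approach as the paper's: both reduce the identity to showing $\widetilde H^*(\tau)=+\infty$ for $\tau\in\Msd\setminus\widetilde K$, and both witness this by testing against the traceless rank-one symmetric matrices $t(e_n\otimes e_n-e_1\otimes e_1)$ (in the paper's notation, the set $R_D$). Your explicit choice $a_t=t(e_1+e_n)$, $b=e_n-e_1$ simply realizes this test matrix concretely, and your justification of the reduction step $(\tilde f\wedge\widetilde H)^{**}=(\tilde f\wedge\widetilde H^{**})^{**}$ is a bit more detailed than the paper's.
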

	
\begin{proof}
The proof is very similar to that of Proposition~\ref{prop:Walpha_char}, hence we only sketch it. We only need to check that
the function $\widetilde H:\Msd \to [0,+\infty]$ defined by
$$\widetilde H(\xi):=
\begin{cases}
2\sqrt{\kappa \mu_w}|\xi| & \text{ if }\xi=a\odot b\in\Msd \text{ for some }a,b \in \R^n \text{ with } a \cdot b = 0,\\
+\infty & \text{ otherwise,}
\end{cases}$$
satisfies
$\widetilde H^*(\tau)=+\infty$ for all $\tau \notin \widetilde K$. Let us fix $\tau \notin \widetilde K$, i.e.\ 
\begin{equation}\label{tG1}
\widetilde G(\tau)=\frac{(\tau_1-\tau_n)^2}{4\mu_w}>2\kappa. 
\end{equation}
It is not restrictive to assume that $\tau$ is diagonal with ordered eigenvalues $\tau_1\leq\dots\leq\tau_n$.
We denote by $R_D$ the set of the diagonal rank-one symmetric deviatoric matrices with ordered eigenvalues $\xi_1=-\xi_n\leq0 = \cdots = 0\leq\xi_n$. Then, by definition,
\begin{equation}\label{tR1}
\widetilde H^*(\tau) \geq\sup_{\xi\in R_D} \bigl\{ \tau:\xi - 2\sqrt{\kappa \mu_w}|\xi|\bigr\}.
\end{equation} 
Setting
$\tau_d:=\tau_n-\tau_1$, equations~\eqref{tG1} and~\eqref{tR1} become
\[\label{tG2}
2\kappa<\widetilde G(\tau)=\frac{\tau_d^2}{4\mu_w},	
\] 
\[\label{tR2}
\widetilde H^*(\tau) \geq\sup_{\xi_d\geq0} \left\{ \frac{\tau_d\xi_d}{2} - \sqrt{2 \kappa\mu_w}\xi_d\right\}=+\infty.
\]				 
This concludes the proof.	
\end{proof}

\section*{Acknowledgements}
F.I. has been a recipient of scholarships from the Fondation Sciences Math\'ematiques de Paris, Emergence Sorbonne Universit\'es, and the S\'ephora-Berrebi Foundation, and gratefully acknowledges their support. This project has received funding from the European Research Council (ERC) under the European Union's Horizon 2020 research and innovation programme, grant agreement No 757254 (SINGULARITY).



\end{document}